
\documentclass[12pt]{amsart}
\usepackage{amsmath}
\usepackage{amssymb,latexsym}
\usepackage{mathrsfs}
\usepackage{amsthm}
\usepackage[v2,tips]{xy}
\usepackage[T1]{fontenc}
\usepackage[latin1]{inputenc}

\makeatletter
\@namedef{subjclassname@2010}{%
  \textup{2010} Mathematics Subject Classification}
\makeatother

\newtheorem{theorem}{Theorem}[section]
\newtheorem{lemma}[theorem]{Lemma}
\newtheorem{proposition}[theorem]{Proposition}
\newtheorem{corollary}[theorem]{Corollary}

\theoremstyle{definition}

\newtheorem{example}[theorem]{Example}
\newtheorem{remark}[theorem]{Remark}

\numberwithin{equation}{section}

\frenchspacing

\textwidth=13.5cm
\textheight=23cm
\parindent=16pt
\topmargin=-0.5cm 

\newcounter{smallromans}

\newenvironment{romanenumerate}
{\begin{list}{{\normalfont\textrm{(\roman{smallromans})}}}%
  {\usecounter{smallromans}\setlength{\itemindent}{0cm}%
   \setlength{\leftmargin}{5.5ex}\setlength{\labelwidth}{5.5ex}%
   \setlength{\topsep}{.5ex}\setlength{\partopsep}{.5ex}%
   \setlength{\itemsep}{0.1ex}}}%
{\end{list}}

\newcommand{\romanref}[1]{{\normalfont\textrm{(\ref{#1})}}}

\newcounter{smallromansdash}

{\end{list}}

\newcounter{bigromans} 
\newenvironment{capromanenumerate}
{\begin{list}{{\normalfont\textrm{(\Roman{bigromans})}}}%
    {\usecounter{bigromans}\setlength{\itemindent}{0cm}%
      \setlength{\leftmargin}{5.5ex}\setlength{\labelwidth}{6ex}%
      \setlength{\topsep}{.5ex}\setlength{\partopsep}{.5ex}%
      \setlength{\itemsep}{0.1ex}}}%
  {\end{list}}

\newcounter{smallarabics}
{\end{list}}

\newcounter{smallalphs}

\newenvironment{alphenumerate}
{\begin{list}{{\normalfont\textrm{(\alph{smallalphs})}}}%
  {\usecounter{smallalphs}\setlength{\itemindent}{0cm}%
   \setlength{\leftmargin}{5.5ex}\setlength{\labelwidth}{5.5ex}%
   \setlength{\topsep}{.5ex}\setlength{\partopsep}{.5ex}%
   \setlength{\itemsep}{0.1ex}}}%
{\end{list}}

\newcommand{\alphref}[1]{{\normalfont\textrm{(\ref{#1})}}}

\newcommand{\N}{\ensuremath{\mathbb{N}}}

\newcommand{\C}{\ensuremath{\mathbb{C}}}

\renewcommand{\phi}{\ensuremath{\varphi}}
\renewcommand{\epsilon}{\ensuremath{\varepsilon}}

\renewcommand{\leq}{\ensuremath{\leqslant}}
\renewcommand{\le}{\ensuremath{\leqslant}}
\renewcommand{\geq}{\ensuremath{\geqslant}}
\renewcommand{\ge}{\ensuremath{\geqslant}}

\newcommand{\smashw}[2][l]{{\text{\makebox[0pt][#1]{$#2$}}}}

\begin{document}
\title[Maximal left ideals of operators on a Banach space]{Maximal
  left ideals of the Banach algebra of bounded operators on a Banach
  space}
\author{H.~G.~Dales}
\address{Department of Mathematics and
  Statistics, Fylde College\\
  Lancaster University,
  Lancaster LA1 4YF, UK}
\email{g.dales@lancaster.ac.uk}
\author[T.~Kania]{Tomasz Kania}
\address{Department of Mathematics and
  Statistics, Fylde College\\
  Lancaster University,
  Lancaster LA1 4YF, UK}
\email{t.kania@lancaster.ac.uk}
\author[T.~Kochanek]{Tomasz Kochanek}
\address{Institute of Mathematics, University of Silesia\\
  ul.\ Bankowa 14, 40-007 Katowice, Poland} 
\email{t\_kochanek@wp.pl}
\author[P.~Koszmider]{Piotr Koszmider}
\address{Institute of Mathematics, Polish Academy of Sciences\\ 
ul.\ \'{S}niadeckich~8, 00-956 Warszawa, Poland}
\email{P.Koszmider@Impan.pl}
\author[N.~J.~Laustsen]{Niels Jakob Laustsen}
\address{Department of Mathematics and
  Statistics, Fylde College\\
  Lancaster University,
  Lancaster LA1 4YF, UK} 
\email{n.laustsen@lancaster.ac.uk}
\date{}

\begin{abstract} We address the following two
  questions regarding the maximal left ideals of the Banach
  algebra~$\mathscr{B}(E)$ of bounded operators acting on an
  infinite-dimensional Banach space~$E\colon$
  \begin{capromanenumerate}
  \item\label{abstractQ1} Does~$\mathscr{B}(E)$ always contain a
    maximal left ideal which is not finitely generated?
  \item\label{abstractQ2} Is every finitely-generated, maximal left
    ideal of~$\mathscr{B}(E)$ necessarily of the form
    \begin{equation}\label{abstractEq1} \{ T\in\mathscr{B}(E) : Tx =
      0\} \tag{$*$} \end{equation} for some non-zero $x\in E$?
  \end{capromanenumerate}
  Since the two-sided ideal~$\mathscr{F}(E)$ of finite-rank operators
  is not contained in any of the maximal left ideals given
  by~\eqref{abstractEq1}, a positive answer to the second question
  would imply a~positive answer to the first.

  Our main results are: (i)~Question~\romanref{abstractQ1} has a
  positive answer for most (possibly all) in\-finite-di\-men\-sional
  Banach spaces; (ii)~Question~\romanref{abstractQ2} has a positive
  answer if and only if no finitely-generated, maximal left ideal
  of~$\mathscr{B}(E)$ contains~$\mathscr{F}(E)$; (iii)~the answer to
  Question~\romanref{abstractQ2} is positive for many, but not all,
  Banach spaces.\bigskip

\noindent
  To appear in \emph{Studia Mathematica.}
\end{abstract}
\subjclass[2010]{Primary 47L10, 46H10; Secondary 47L20}
\keywords{Finitely-generated, maximal left ideal; Banach algebra;
  bounded operator; inessential operator; Banach space;
  Argyros--Haydon space; Sinclair--Tullo theorem}
\maketitle
\section{Introduction and statement of main results}
\noindent
The purpose of this paper is to study the maximal left ideals of the
Banach algebra~$\mathscr{B}(E)$ of (bounded, linear) operators acting
on a Banach space~$E$, particularly the maximal left ideals that are
finitely generated. A general introduction to the Banach
algebra~$\mathscr{B}(E)$ can be found in~\cite[\S2.5]{dales}. Our
starting point is the elementary observation that $\mathscr{B}(E)$
always contains a large supply of singly-generated, maximal left
ideals, namely
\begin{equation}\label{trivialfgmaxrightidealsEq}
  \mathscr{M}\!\mathscr{L}_x = \{ T\in\mathscr{B}(E) : Tx =
  0\}\quad (x\in E\setminus\{0\}) \end{equation} (see
Proposition~\ref{trivialfgmaxrightideals} for details).  We call the
maximal left ideals of this form \emph{fixed}, inspired by the
analogous terminology for ultrafilters.

The Banach algebra $\mathscr{B}(E)$ is semi\-simple, as is well known
(\emph{e.g.}, see~\cite[Theorem~2.5.8]{dales}); that is, the
intersection of its maximal left ideals is $\{0\}$. We observe that
this is already true for the inter\-section of the fixed maximal left
ideals.

In the case where the Banach space $E$ is finite-dimensional, an
elementary result in linear algebra states that the mapping
\[ F\mapsto \{T\in\mathscr{B}(E) : F\subseteq\ker T\} \] is an
anti-isomorphism of the lattice of linear subspaces of~$E$ onto the
lattice of left ideals of~$\mathscr{B}(E)$ (\emph{e.g.},
see~\cite[p.~173, Exercise~3]{jacobson}). Hence each maximal left
ideal~$\mathscr{L}$ of~$\mathscr{B}(E)$ corresponds to a unique
minimal, non-zero linear subspace of~$E$, that is, a one-dimensional
subspace, and therefore~$\mathscr{L}$ is fixed. This conclusion is
also an easy consequence of our work, as outlined in
Remark~\ref{remarkFinitedimCase}\romanref{remarkFinitedimCaseB},
below. In contrast, this statement is false whenever~$E$ is
infinite-dimensional because the two-sided ideal~$\mathscr{F}(E)$ of
finite-rank operators is proper, so that, by Krull's theorem, it is
contained in a maximal left ideal, which cannot be fixed since, for
each $x\in E\setminus\{0\}$, there is a finite-rank operator~$T$
on~$E$ such that $Tx\neq 0$.

Inspired by these observations and his
collaboration~\cite{daleszelazko} with \.{Z}elazko, the first-named
author raised the following two questions for an infinite-dimensional
Banach space~$E\colon$
\begin{capromanenumerate}
\item\label{DalesQ2} Does~$\mathscr{B}(E)$ always contain a maximal
  left ideal which is not finitely generated?
\item\label{DalesQ1} Is every finitely-generated, maximal left ideal
  of~$\mathscr{B}(E)$ necessarily fixed?
\end{capromanenumerate}
In the light of the previous paragraph, we note that a positive answer
to~\romanref{DalesQ1} would imply a positive answer
to~\romanref{DalesQ2}.

The answers to the above questions depend only on the isomorphism
class of the Banach space~$E$. This follows from the theorem of
Eidelheit, which states that two Banach spaces~$E$ and~$F$ are
isomorphic if and only if the corresponding Banach
algebras~$\mathscr{B}(E)$ and~$\mathscr{B}(F)$ are isomorphic
(\emph{e.g.}, see \cite[Theorem~2.5.7]{dales}).  \smallskip

After presenting some preliminary material in
Section~\ref{SectPrelim}, we shall use a counting argu\-ment in
Section~\ref{section_countingmaxideals} to answer
Question~\romanref{DalesQ2} positively for a large class of Banach
spaces, in\-clud\-ing all separable Banach spaces which contain an
infinite-dimensional, closed, complemented subspace with an
unconditional basis and, more generally, all separable Banach spaces
with an unconditional Schauder decomposition (see
Corollary~\ref{countablescauderdecomp} for details).

We then turn our attention to Question~\romanref{DalesQ1}.
Section~\ref{sectionLcontainsFE} begins with the following dichotomy,
which can be viewed as the analogue of the fact that an ultrafilter on
a set~$M$ is either fixed (in the sense that it consists of precisely
those subsets of~$M$ which contain a fixed element $x\in M$), or it
contains the Fr\'{e}chet filter of all cofinite subsets of~$M$.
\begin{theorem}[Dichotomy for maximal left ideals]\label{dichotomythm}
  Let~$E$ be a non-zero Banach space. Then, for each maximal left
  ideal~$\mathscr{L}$ of~$\mathscr{B}(E)$, exactly one of the
  following two alternatives holds:
  \begin{romanenumerate}
  \item\label{dichotomythm1} $\mathscr{L}$ is fixed; or
  \item\label{dichotomythm2} $\mathscr{L}$ contains $\mathscr{F}(E)$.
  \end{romanenumerate}
\end{theorem}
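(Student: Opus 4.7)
The plan is to handle the two alternatives separately and to begin by verifying that they cannot both hold. If $\mathscr{L} = \mathscr{M}\!\mathscr{L}_{y}$ for some non-zero $y \in E$, then by Hahn--Banach there exists $f \in E^{*}$ with $f(y) \neq 0$, and the rank-one operator $y \otimes f \in \mathscr{F}(E)$, defined by $(y \otimes f)(z) = f(z)\, y$, satisfies $(y \otimes f)(y) = f(y)\, y \neq 0$, so it does not lie in $\mathscr{L}$. This rules out the simultaneous occurrence of \romanref{dichotomythm1} and \romanref{dichotomythm2}.

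For the substantive direction I would assume that $\mathscr{F}(E) \not\subseteq \mathscr{L}$ and show that $\mathscr{L}$ is fixed. Since every finite-rank operator decomposes as a sum of rank-one operators, some rank-one $x \otimes f$ falls outside $\mathscr{L}$. Maximality of $\mathscr{L}$ then gives $\mathscr{L} + \mathscr{B}(E)(x \otimes f) = \mathscr{B}(E)$, producing $S \in \mathscr{B}(E)$ and $R \in \mathscr{L}$ with $R + (Sx) \otimes f = I$. Setting $y := Sx$, which must be non-zero lest $I \in \mathscr{L}$, yields the key relation $I - y \otimes f \in \mathscr{L}$.

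The heart of the argument is to extract $f(y) = 1$ from this relation. First, $f(y) = 0$ is impossible: in that case $(y \otimes f)^{2} = 0$, so $(I + y \otimes f)(I - y \otimes f) = I$, and the left-ideal property forces $I \in \mathscr{L}$. Setting $\lambda := f(y) \neq 0$ and $P := \lambda^{-1}\, y \otimes f$, a direct calculation gives $P(I - y \otimes f) = (1 - \lambda) P$, which lies in $\mathscr{L}$; if $\lambda \neq 1$ this implies $P \in \mathscr{L}$, then $y \otimes f \in \mathscr{L}$, and finally $I \in \mathscr{L}$, a contradiction. Hence $f(y) = 1$. Consequently, whenever $T \in \mathscr{B}(E)$ satisfies $Ty = 0$, one has $T = T(I - y \otimes f) \in \mathscr{L}$, proving $\mathscr{M}\!\mathscr{L}_{y} \subseteq \mathscr{L}$; maximality of $\mathscr{M}\!\mathscr{L}_{y}$ (established in Proposition~\ref{trivialfgmaxrightideals}) combined with properness of $\mathscr{L}$ then forces equality, so $\mathscr{L}$ is fixed.

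I expect the normalisation step, forcing $f(y) = 1$, to be the main technical moment: without it the vector $y$ need not be annihilated by $I - y \otimes f$, and the final identification of $\mathscr{L}$ as $\mathscr{M}\!\mathscr{L}_{y}$ would collapse. The argument succeeds precisely because of the tight interplay between maximality of $\mathscr{L}$ and the algebra of rank-one operators, which are either nilpotent or scalar multiples of idempotents.
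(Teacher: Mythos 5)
Your proof is correct and follows essentially the same route as the paper: extract a rank-one operator $x\otimes f\notin\mathscr{L}$, use maximality to obtain $I_E-y\otimes f\in\mathscr{L}$ with $y=Sx\neq 0$, and conclude $\mathscr{M}\!\mathscr{L}_y\subseteq\mathscr{L}$. The only (minor) difference is at the normalisation step: the paper simply replaces $f$ by a functional $\mu$ with $\langle y,\mu\rangle=1$ and observes that $P=I_E-y\otimes\mu$ satisfies $P(I_E-y\otimes f)=P\in\mathscr{L}$, whereas you prove directly that $f(y)=1$ must already hold --- both work.
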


\begin{remark}\label{remarkFinitedimCase}
\begin{romanenumerate}
\item\label{remarkFinitedimCaseB} Since \mbox{$\mathscr{F}(E) =
    \mathscr{B}(E)$} for each finite-dimensional Banach space~$E$, no
  proper left ideal of~$\mathscr{B}(E)$ satisfies
  condition~\romanref{dichotomythm2} of Theorem~\ref{dichotomythm}.
  Hence, by this theorem, each maximal left ideal of~$\mathscr{B}(E)$
  is fixed whenever~$E$ is finite-dimensional.
\item Another immediate consequence of Theorem~\ref{dichotomythm} is
  that Question~\romanref{DalesQ1} has a positive answer for a Banach
  space~$E$ if and only if~$\mathscr{F}(E)$ is not contained in any
  finitely-generated, maximal left ideal of~$\mathscr{B}(E)$.
\item In Corollary~\ref{June12dich1}, below, we shall deduce from
  Theorem~\ref{dichotomythm} a slightly stronger, but also more
  technical, conclusion that involves the larger ideal of inessential
  operators instead of~$\mathscr{F}(E)$.
\end{romanenumerate}
\end{remark}

The other main result to be proved in Section~\ref{sectionLcontainsFE}
is the following dichotomy for closed left ideals of~$\mathscr{B}(E)$
that are not necessarily maximal.

\begin{theorem}[Dichotomy for closed left ideals]\label{dich2}
  Let~$E$ be a non-zero Banach space, let~$\mathscr{L}$ be a closed
  left ideal of~$\mathscr{B}(E)$, and suppose that~$E$ is reflexive
  or~$\mathscr{L}$ is finitely generated. Then exactly one of the
  following two alternatives holds:
  \begin{romanenumerate}
  \item\label{dich2a} $\mathscr{L}$ is contained in a fixed maximal
    left ideal; or
  \item\label{dich2b} $\mathscr{L}$ contains~$\mathscr{F}(E)$.
  \end{romanenumerate}
\end{theorem}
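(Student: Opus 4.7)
My plan starts with the mutual exclusivity of the two alternatives: if $\mathscr{L}$ were simultaneously contained in some $\mathscr{M}\!\mathscr{L}_{x}$ and contained $\mathscr{F}(E)$, picking $\phi\in E^{*}$ with $\phi(x) = 1$ would produce $\phi \otimes x \in \mathscr{F}(E) \subseteq \mathscr{M}\!\mathscr{L}_{x}$, contradicting $(\phi \otimes x)(x) = x \neq 0$. Next I would set $N := \bigcap_{T\in\mathscr{L}} \ker T$: if $N \neq \{0\}$, any nonzero $x_{0} \in N$ witnesses $\mathscr{L} \subseteq \mathscr{M}\!\mathscr{L}_{x_{0}}$ and alternative \romanref{dich2a} holds. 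The whole problem then reduces to showing that the assumption $N = \{0\}$ forces $\mathscr{F}(E) \subseteq \mathscr{L}$.

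The central device would be the set $\Psi(\mathscr{L}) := \{\phi \in E^{*} : \phi \otimes z \in \mathscr{L}\ \text{for some}\ z \neq 0\}$ of ``first slots'' of rank-one operators lying in $\mathscr{L}$. I would establish three facts that hold for every closed left ideal: (a) $\Psi(\mathscr{L})$ is a norm-closed linear subspace of $E^{*}$, directly from the norm-closedness of $\mathscr{L}$; (b) the identity $(\phi \otimes z)\, T = (T^{*}\phi) \otimes z$, combined with the left-ideal property, yields $T^{*}\phi \in \Psi(\mathscr{L})$ for every $T \in \mathscr{L}$ and every $\phi \in E^{*}$, so that $\Psi(\mathscr{L}) = \bigcup_{T \in \mathscr{L}} T^{*}(E^{*})$; and (c) $\mathscr{F}(E) \subseteq \mathscr{L}$ if and only if $\Psi(\mathscr{L}) = E^{*}$, since the identity $A \cdot (\phi \otimes z) = \phi \otimes A z$ allows the second slot of a rank-one operator in $\mathscr{L}$ to be adjusted at will.

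The finish then splits according to the hypothesis. In the reflexive case, I would suppose $\Psi(\mathscr{L}) \neq E^{*}$ and, via Hahn--Banach together with the identification $E^{**} = E$, produce $x_{0} \in E \setminus \{0\}$ annihilated by every $\phi \in \Psi(\mathscr{L})$; property (b) then gives $\psi(T x_{0}) = (T^{*}\psi)(x_{0}) = 0$ for every $\psi \in E^{*}$ and every $T \in \mathscr{L}$, forcing $x_{0} \in N$ and contradicting $N = \{0\}$. In the finitely generated case $\mathscr{L} = \sum_{i=1}^{n} \mathscr{B}(E)\, T_{i}$, I would form $\Phi \colon E \to E^{n}$, $\Phi x = (T_{1}x, \ldots, T_{n}x)$; then $\ker\Phi = N = \{0\}$ and (b) identifies $\Psi(\mathscr{L})$ with $\operatorname{range}(\Phi^{*})$, so (a) together with Banach's closed range theorem gives $\Psi(\mathscr{L}) = \operatorname{range}(\Phi^{*}) = (\ker \Phi)^{\perp} = E^{*}$. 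I expect the main technical hurdle to lie in isolating this invariant $\Psi(\mathscr{L})$ and proving properties (a)--(c); once they are in hand, each of the two hypotheses affords a short but qualitatively different finishing argument (Hahn--Banach plus reflexivity on one side, the closed range theorem on the other).
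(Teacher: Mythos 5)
Your argument is correct and is essentially the paper's own proof in different notation: your invariant $\Psi(\mathscr{L})$ is exactly the range of the operator $\Omega_\Gamma$ of Section~\ref{sectionLcontainsFE}, your facts (a)--(c) correspond to Corollary~\ref{Omegaclosedrange} and parts \romanref{onedimleftideal2} and \romanref{onedimleftideal4} of Lemma~\ref{PsiInjLemma}, and your two finishing steps (Hahn--Banach plus reflexivity, respectively the closed range theorem applied to $\Phi=\Psi_\Gamma$) are the ones used in the proof of Theorem~\ref{charfixedideals}. Nothing further is needed.
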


We note that Theorems~\ref{dichotomythm} and~\ref{dich2} are genuine
dichotomies, in the sense that in both theorems the two
alternatives~\romanref{dich2a} and~\romanref{dich2b} are mutually
exclusive because, as observed above, no fixed maximal left ideal
of~$\mathscr{B}(E)$ contains~$\mathscr{F}(E)$.

The purpose of Sections~\ref{sectionAllfgidealsfixed}
and~\ref{newSection6} is to show that Question~\romanref{DalesQ1} has
a positive answer for many Banach spaces, both `classical' and more
`exotic' ones. We can summarize our results as follows, and refer to
Sections~\ref{sectionAllfgidealsfixed} and~\ref{newSection6} for full
details, including precise definitions of any unexplained terminology.

\begin{theorem}\label{allmaxleftidealsarefixed} 
  Let $E$ be a Banach space which satisfies one of the following six
  condi\-tions:
  \begin{romanenumerate}
  \item $E$ has a Schauder basis and is complemented in its bidual;
  \item $E$ is isomorphic to the dual space of a Banach space with a
    Schauder basis;
  \item $E$ is an injective Banach space;
  \item\label{allmaxleftidealsarefixed4} $E = c_0(\mathbb{I})$, $E =
    H$, or $E = c_0(\mathbb{I})\oplus H$, where~$\mathbb{I}$ is a
    non-empty index set and $H$ is a Hilbert space;
  \item $E$ is a Banach space which has few operators;
  \item $E = C(K)$, where $K$ is a compact Hausdorff space without
    isolated points, and each operator on $C(K)$ is a weak
    multiplication.
  \end{romanenumerate}
  Then each finitely-generated, maximal left ideal of~$\mathscr{B}(E)$
  is fixed.
\end{theorem}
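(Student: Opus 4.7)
The plan is to invoke Theorem~\ref{dichotomythm} together with Remark~\ref{remarkFinitedimCase}\romanref{remarkFinitedimCaseB}: it suffices, in each of the six cases, to show that no finitely-generated maximal left ideal $\mathscr{L}$ of $\mathscr{B}(E)$ contains $\mathscr{F}(E)$. Arguing by contradiction, suppose $\mathscr{L} = \sum_{j=1}^n \mathscr{B}(E)T_j$ is such an ideal; the aim in each case is to produce $S_1,\ldots,S_n\in\mathscr{B}(E)$ with $\sum_j S_j T_j = I$, contradicting properness.

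The six conditions fall into three natural groups. For (v) and (vi) the hypothesis forces $\mathscr{B}(E)$ to be small modulo a distinguished two-sided ideal $\mathscr{J}$ (typically the inessential operators $\mathscr{E}(E)$ or a variant determined by the weak-multiplication structure of $C(K)$), in the sense that $\mathscr{B}(E)/\mathscr{J}$ is one-dimensional. Any maximal left ideal containing $\mathscr{F}(E)$ must then coincide with $\mathscr{J}$, and the task reduces to showing that $\mathscr{J}$ itself is not finitely generated as a left ideal of $\mathscr{B}(E)$. This I would establish by observing that any finite collection of generators $K_1,\ldots,K_n\in\mathscr{J}$ confines all operators $\sum_j S_j K_j$ in the resulting ideal to a common structural restriction (bounded essential rank, or range within a fixed separable subspace), a restriction which $\mathscr{J}\supseteq\mathscr{F}(E)$ manifestly violates.

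Cases (i), (ii) and \romanref{allmaxleftidealsarefixed4} endow $E$ with a Schauder basis (directly, or via its pre-dual), or exploit the exceptional homogeneity of $c_0(\mathbb{I})$ and Hilbert space. In either setting one has a natural net of finite-rank projections $(P_\alpha)$ converging strongly to $I$; since each $P_\alpha$ lies in $\mathscr{F}(E)\subseteq\mathscr{L}$, it admits a representation $P_\alpha = \sum_j S_{\alpha,j} T_j$ with $S_{\alpha,j}\in\mathscr{B}(E)$. Applying the open mapping theorem to the bounded surjection $\mathscr{B}(E)^n\to\mathscr{L}$, $(S_1,\ldots,S_n)\mapsto\sum_j S_j T_j$, yields a uniform bound on the $\|S_{\alpha,j}\|$, and the duality or complementation hypothesis on $E$ then permits a weak-operator limit point $S_j$ to be extracted, delivering $I = \sum_j S_j T_j\in\mathscr{L}$. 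Case (iii) is reduced to (i) or \romanref{allmaxleftidealsarefixed4} by embedding the injective space $E$ complementably in an $\ell_\infty(\mathbb{I})$.

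The main obstacle is the extraction of a weak-operator limit of the coefficient net $(S_{\alpha,j})$: this step requires a weak$^*$-type compactness that is supplied differently in each case — by complementation in $E^{**}$ for (i), by duality for (ii), and by explicit decomposition for \romanref{allmaxleftidealsarefixed4}. The non-separable variants of (iii) and \romanref{allmaxleftidealsarefixed4} are the most delicate, since one must reduce a transfinite net of projections to a single choice of $n$ coefficients using only the finite generation of $\mathscr{L}$; cases (v) and (vi), by contrast, are essentially formal once the one-dimensionality of $\mathscr{B}(E)/\mathscr{J}$ is established.
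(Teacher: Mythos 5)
Your overall strategy --- reduce via Theorem~\ref{dichotomythm} to showing that no finitely-generated maximal left ideal contains $\mathscr{F}(E)$, then argue case by case --- is the paper's strategy, and for cases (i) and (ii) your argument is essentially the one given (Theorems~\ref{Thmlp} and~\ref{TKNJLoct3Thm1}): represent the basis projections as $P_k=\sum_j S_{k,j}T_j$ with uniformly bounded coefficients and take a weak$^*$ accumulation point in $\mathscr{B}(E^n,E^{**})$, using complementation in the bidual (resp.\ duality) to descend. Your use of the open mapping theorem to get the uniform bound is legitimate here because a maximal left ideal of a unital Banach algebra is automatically closed; note, though, that the paper needs the more delicate Lemma~\ref{TKTKNJLthm11June2012} because it proves the stronger assertion that \emph{no} finitely-generated left ideal (maximal or not, hence possibly non-closed) contains $\mathscr{K}(E)$.

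The remaining cases contain genuine gaps. For (iii) and the $c_0(\mathbb{I})$ parts of \romanref{allmaxleftidealsarefixed4}, the weak-operator extraction you rely on fails: $c_0(\mathbb{I})$ is not complemented in its bidual $\ell_\infty(\mathbb{I})$ (Phillips), so the limit $T\in\mathscr{B}(E^n,E^{**})$ with $T\Psi_\Gamma=\kappa_E$ cannot be pushed back down to $\mathscr{B}(E^n,E)$, and no ``explicit decomposition'' supplies the missing compactness. Your proposed reduction of (iii) to (i) or \romanref{allmaxleftidealsarefixed4} by embedding $E$ complementably in $\ell_\infty(\mathbb{I})$ also does not work: $\ell_\infty(\mathbb{I})$ falls under neither case, and the property ``every finitely-generated maximal left ideal is fixed'' does not pass through complemented subspaces or direct sums (Section~\ref{nonfixedmaxleftideal} exhibits $X_{\text{\normalfont{AH}}}\oplus\ell_\infty$ failing it although both summands satisfy it). The paper's mechanism is different and purely geometric: by Corollary~\ref{Sept2011keylemma}, $\mathscr{F}(E)\subseteq\mathscr{L}_\Gamma$ forces $\Psi_\Gamma$ to be bounded below, and injectivity (resp.\ Sobczyk--Granero for $c_0(\mathbb{I})$) makes its range complemented in $E^n$, so $\Psi_\Gamma$ is left-invertible and $\mathscr{L}_\Gamma=\mathscr{B}(E)$ (Proposition~\ref{charFEinFGleftidel}); the space $c_0(\mathbb{I})\oplus H$ needs the further Fredholm argument of Proposition~\ref{c0plusHilbert}, exploiting that the off-diagonal entries are strictly singular.

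For (v), your claim that the ideal generated by $K_1,\ldots,K_n\in\mathscr{J}$ is confined to ``a common structural restriction which $\mathscr{F}(E)$ manifestly violates'' is vacuous when $\mathscr{J}=\mathscr{S}(E)$: the left ideal generated by strictly singular operators consists entirely of strictly singular operators, and finite-rank operators \emph{are} strictly singular, so no contradiction arises at this level. The actual obstruction (Lemma~\ref{lemma04042012}) is again Corollary~\ref{Sept2011keylemma}: containment of $\mathscr{F}(E)$ makes $\Psi_\Gamma$ bounded below, and a bounded-below operator built from strictly singular generators forces $\dim E<\infty$. For (vi), there is no single codimension-one ideal $\mathscr{J}$ with $\mathscr{B}(E)/\mathscr{J}$ one-dimensional; rather $\mathscr{B}(C(K))/\mathscr{W}(C(K))\cong C(K)$, the non-fixed maximal left ideals form the family $\mathscr{Z}_k=\mu(\ker\epsilon_k)+\mathscr{W}(C(K))$ indexed by $k\in K$, and their failure to be finitely generated is inherited, through the semidirect-product decomposition of Theorem~\ref{CKweakmult}, from Gillman's theorem that $\ker\epsilon_k$ is finitely generated in $C(K)$ precisely when $k$ is isolated --- a commutative fact that no rank- or separability-type restriction on operators will deliver.
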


On the other hand, there is a Banach space for which the answer to
Question~\romanref{DalesQ1} is nega\-tive; this is the main result of
Section~\ref{nonfixedmaxleftideal}. Its statement involves
Argyros--Hay\-don's Banach space having very few operators. We denote
this space by~$X_{\text{\normalfont{AH}}}$, and refer to
Theorem~\ref{thmAH} for a summary of its main properties.

\begin{theorem}\label{Thmnonfixedmaxleftideal} Let
  $E=X_{\text{\normalfont{AH}}}\oplus\ell_\infty$. Then the set
  \begin{equation}\label{idealK1}
    \mathscr{K}_1 = \left\{\begin{pmatrix} T_{1,1} & T_{1,2}\\
        T_{2,1} & 
        T_{2,2} \end{pmatrix}\in\mathscr{B}(E) :
      T_{1,1}\ {\normalfont{\text{is compact}}}\right\} 
  \end{equation}  
  is a maximal two-sided ideal of codimension one in~$\mathscr{B}(E)$,
  and hence also a maximal left ideal. Moreover, $\mathscr{K}_1$ is
  singly generated as a left ideal, and it is not fixed.
\end{theorem}
This theorem suggests   that the Banach space
$E=X_{\text{\normalfont{AH}}}\oplus\ell_\infty$ is a natural candidate
for providing a negative answer to
Question~\romanref{DalesQ2}. However, as we shall also show in
Section~\ref{nonfixedmaxleftideal}, it does not.
\begin{theorem}\label{Thmnonfixedmaxleftideal2} 
  Let $E=X_{\text{\normalfont{AH}}}\oplus\ell_\infty$. Then the ideal
  $\mathscr{K}_1$ given by~\eqref{idealK1} is the unique non-fixed,
  finitely-generated, maximal left ideal of~$\mathscr{B}(E)$.  Hence
  \begin{equation}\label{idealW2}
    \mathscr{W}_2 = \left\{\begin{pmatrix} T_{1,1} & T_{1,2}\\
        T_{2,1} & 
        T_{2,2} \end{pmatrix}\in\mathscr{B}(E) :
      T_{2,2}\ {\normalfont{\text{is weakly compact}}}\right\},
  \end{equation}  
  which is a maximal two-sided ideal of~$\mathscr{B}(E)$, is not
  contained in any finitely-generated, maximal left ideal
  of~$\mathscr{B}(E)$.
\end{theorem}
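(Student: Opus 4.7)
The second (``hence'') assertion will follow at once from the uniqueness claim: the ideal $\mathscr{W}_2$ contains $\mathscr{F}(E)$ and therefore cannot be contained in a fixed maximal left ideal by Theorem~\ref{dichotomythm}, while the operator $\bigl(\begin{smallmatrix}I & 0\\ 0 & 0\end{smallmatrix}\bigr)$ belongs to $\mathscr{W}_2\setminus\mathscr{K}_1$, ruling out containment in~$\mathscr{K}_1$; granted uniqueness, these exhaust the finitely-generated maximal left ideals, so none contains~$\mathscr{W}_2$. I therefore focus on the uniqueness claim.

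Let $\mathscr{L}$ be any non-fixed, finitely-generated, maximal left ideal of~$\mathscr{B}(E)$; the goal is $\mathscr{L}=\mathscr{K}_1$. Corollary~\ref{June12dich1} gives $\mathscr{L}\supseteq\mathscr{E}(E)$, the ideal of inessential operators. The first step is to describe $\mathscr{B}(E)/\mathscr{E}(E)$ concretely: combining the Argyros--Haydon property $\mathscr{B}(X_{\text{AH}})=\mathbb{K}I+\mathscr{K}(X_{\text{AH}})$ (so $\mathscr{E}(X_{\text{AH}})=\mathscr{K}(X_{\text{AH}})$ and $\mathscr{B}(X_{\text{AH}})/\mathscr{K}(X_{\text{AH}})\cong\mathbb{K}$), the identity $\mathscr{E}(\ell_\infty)=\mathscr{W}(\ell_\infty)$, and Rosenthal's $\ell_\infty$-theorem---every operator $\ell_\infty\to X_{\text{AH}}$ is weakly compact, because the separable space $X_{\text{AH}}$ contains no copy of~$\ell_\infty$---I expect to obtain the decomposition
\[
 \mathscr{B}(E)/\mathscr{E}(E)\;\cong\;\mathbb{K}\oplus\bigl(\mathscr{B}(\ell_\infty)/\mathscr{W}(\ell_\infty)\bigr)
\]
as unital Banach algebras, with the first summand carrying the character~$\chi$ of Theorem~\ref{Thmnonfixedmaxleftideal} whose kernel is~$\mathscr{K}_1$. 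By the standard classification of maximal left ideals in a direct sum of unital algebras, $\mathscr{L}/\mathscr{E}(E)$ is either $\{0\}\oplus(\mathscr{B}(\ell_\infty)/\mathscr{W}(\ell_\infty))$---whence $\mathscr{L}=\mathscr{K}_1$, as desired---or $\mathbb{K}\oplus\mathscr{N}$ for some maximal left ideal~$\mathscr{N}$ of~$\mathscr{B}(\ell_\infty)/\mathscr{W}(\ell_\infty)$.

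To rule out the latter, I lift $\mathscr{N}$ to the maximal left ideal $\tilde{\mathscr{N}}$ of $\mathscr{B}(\ell_\infty)$ that contains $\mathscr{W}(\ell_\infty)$, and aim to show that $\tilde{\mathscr{N}}$ itself is finitely generated as a left ideal of~$\mathscr{B}(\ell_\infty)$; since $\tilde{\mathscr{N}}\supseteq\mathscr{F}(\ell_\infty)$ it is then non-fixed, contradicting Theorem~\ref{allmaxleftidealsarefixed}\textup{(iii)} (which asserts that every finitely-generated maximal left ideal of the injective space~$\ell_\infty$ is fixed). Writing $\mathscr{L}=\sum_{i=1}^n\mathscr{B}(E)L_i$, the natural candidates for generators are the $(2,2)$-entries $(L_1)_{2,2},\ldots,(L_n)_{2,2}\in\tilde{\mathscr{N}}$: for each $N\in\tilde{\mathscr{N}}$ the operator $\bigl(\begin{smallmatrix}0 & 0\\ 0 & N\end{smallmatrix}\bigr)$ lies in $\mathscr{L}$ and equals $\sum_iS_iL_i$ for some $S_i\in\mathscr{B}(E)$, and projecting its $(2,2)$-entry gives $N=\sum_i(S_i)_{2,2}(L_i)_{2,2}+\sum_i(S_i)_{2,1}(L_i)_{1,2}$ modulo~$\mathscr{W}(\ell_\infty)$, the second sum being weakly compact on~$\ell_\infty$. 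The main obstacle is twofold: first, verifying the claimed decomposition of $\mathscr{B}(E)/\mathscr{E}(E)$, which rests on a delicate analysis of the off-diagonal ``cross-corner'' operators and of their inessentiality status; and second, absorbing the $\mathscr{W}(\ell_\infty)$-remainder into the left ideal generated by the~$(L_i)_{2,2}$'s, which requires exploiting the extra generators supplied by~$\mathscr{E}(E)\subseteq\mathscr{L}$ to upgrade ``generation modulo~$\mathscr{W}(\ell_\infty)$'' to genuine finite generation in $\mathscr{B}(\ell_\infty)$.
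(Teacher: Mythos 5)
Your reduction of the ``hence'' clause to the uniqueness statement is essentially the paper's own argument (namely $\mathscr{F}(E)\subseteq\mathscr{W}_2\nsubseteq\mathscr{K}_1$), although you do not address the assertion that $\mathscr{W}_2$ is a maximal two-sided ideal, which the paper obtains from Proposition~\ref{twosidedidealsinBE}. Your framework for the uniqueness claim --- pass to $\mathscr{B}(E)/\mathscr{E}(E)\cong\C\oplus\bigl(\mathscr{B}(\ell_\infty)/\mathscr{W}(\ell_\infty)\bigr)$ and classify the maximal left ideals of a direct sum --- is sound and is organized differently from the paper, which instead proves (Theorem~\ref{propCharleftidealcontainingFE}) that every maximal left ideal containing a bounded-below operator equals $\mathscr{K}_1$. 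However, your argument has a genuine gap exactly where you flag ``the main obstacle'', and the sketch you give for closing it does not work.

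The problem is the second case, where $\mathscr{L}/\mathscr{E}(E)=\C\oplus\mathscr{N}$ and you must derive a contradiction from the lifted maximal left ideal $\tilde{\mathscr{N}}$ of $\mathscr{B}(\ell_\infty)$. Your computation only yields $\tilde{\mathscr{N}}\subseteq\mathscr{L}_{\Gamma'}+\mathscr{W}(\ell_\infty)$ with $\Gamma'=\{(L_1)_{2,2},\ldots,(L_n)_{2,2}\}$, and ``exploiting the extra generators supplied by $\mathscr{E}(E)\subseteq\mathscr{L}$'' is not a step: $\mathscr{L}$ has only the $n$ generators $L_i$, and adjoining infinitely many elements of $\mathscr{E}(E)$ destroys finite generation. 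The missing ingredient is Corollary~\ref{Sept2011keylemma}: since $\mathscr{F}(E)\subseteq\mathscr{L}$, the operator $\Psi_\Gamma$ is bounded below; restricting it to the $\ell_\infty$-summand and using that each $(L_i)_{1,2}\colon\ell_\infty\to X_{\text{\normalfont{AH}}}$ is strictly singular, one deduces via \cite[Proposition~2.c.10]{lt1} that $\Psi_{\Gamma'}$ is an upper semi-Fredholm operator. Adjoining a single finite-rank projection $Q$ onto $\ker\Psi_{\Gamma'}$ then makes $\Psi_{\Gamma'\cup\{Q\}}$ bounded below, so that $\mathscr{L}_{\Gamma'\cup\{Q\}}$ is a finitely-generated left ideal of $\mathscr{B}(\ell_\infty)$ which contains $\mathscr{F}(\ell_\infty)$ (by Corollary~\ref{Sept2011keylemma} again) and is contained in the proper ideal~$\tilde{\mathscr{N}}$; this contradicts Corollary~\ref{noproperfgleftidealcontainsFE} for the injective space~$\ell_\infty$, and is both sharper and easier than your plan of proving that $\tilde{\mathscr{N}}$ itself is finitely generated. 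This semi-Fredholm analysis of the $(2,2)$-corner is precisely the content of the paper's proof of Theorem~\ref{propCharleftidealcontainingFE}, where it is carried out on a single bounded-below operator $R\in\mathscr{L}$ obtained by composing $\Psi_\Gamma$ with an embedding of~$E^n$ into~$E$; without it, your ``generation modulo $\mathscr{W}(\ell_\infty)$'' cannot be upgraded and the argument does not close.
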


To conclude this summary of our results, let us point out that
Question~\romanref{DalesQ1} remains open in some important cases,
notably for $E = C(K)$, where $K$ is any infinite, compact metric
space such that $C(K)$ is not isomorphic to~$c_0$. \smallskip

As a final point, we shall explain how our work fits into a more
general context.  The main motivation behind
Question~\romanref{DalesQ2} comes from the fact that it is the special
case where $\mathscr{A} = \mathscr{B}(E)$ for a Banach space~$E$ of
the following conjecture, raised and discussed in~\cite{daleszelazko}:
\begin{quote}\label{DZconj}
  \textit{Let~$\mathscr{A}$ be a unital Banach algebra such that every
    maximal left ideal of~$\mathscr{A}$ is finitely generated. Then
    $\mathscr{A}$ is finite-dimensional.}
\end{quote}
A stronger form of this conjecture in the case where~$\mathscr{A}$ is
commutative was proved by Ferreira and Tomassini~\cite{ft}; extensions
of this result are given in~\cite{daleszelazko}. The conjecture is
also known to be true for $C^*$\nobreakdash-alge\-bras. For a proof
and a generalization of this result to the class of Hilbert
$C^*$\nobreakdash-modules, see~\cite{BlecherKania}.

The above conjecture was suggested by Sinclair and Tullo's
theorem~\cite{st}, which states that a Banach algebra~$\mathscr{A}$ is
finite-dimensional if each closed left ideal of~$\mathscr{A}$ (not
just each maximal one) is finitely generated.  This result has been
generalized by Boudi~\cite{boudi}, who showed that the conclusion
that~$\mathscr{A}$ is finite-dimensional remains true under the
formally weaker hypo\-thesis that each closed left ideal
of~$\mathscr{A}$ is countably generated. (Boudi's theorem can in fact
be deduced from Sinclair and Tullo's theorem because a closed,
countably-generated left ideal is necessarily finitely generated
by~\cite[Prop\-o\-si\-tion~1.5]{daleszelazko}.)

Another result that is related to our general theme, but of a
different flavour from those just mentioned, is due to Gr\o{}nb\ae{}k
\cite[Proposition~7.3]{GronbaekMorita}, who has shown that, for a
Banach space~$E$ with the approximation property, the mapping
\[ F\mapsto \overline{\operatorname{span}}\{x\otimes\lambda : x\in
E,\, \lambda\in F\} \] is an isomorphism of the lattice of closed
linear subspaces~$F$ of the dual space of~$E$ onto the lattice of
closed left ideals of the Banach algebra of compact operators on~$E$.

\section{Preliminaries}\label{SectPrelim}
\noindent
Our notation is mostly standard.  We write $|M|$ for the cardinality
of a set~$M$. As usual, $\aleph_0$ and~$\aleph_1$ denote the first and
second infinite cardinals, respectively, while $\mathfrak{c} =
2^{\aleph_0}$ is the cardinality of the continuum.

Let $E$ be a Banach space, always supposed to be over the complex
field~$\mathbb{C}$. We denote by~$I_E$ the identity operator on~$E$.
For a non-empty set~$\mathbb{I}$, we define
\[ \ell_\infty(\mathbb{I},E) = \{ f\colon\mathbb{I}\to E : \|
f\|_\infty<\infty\},\quad\text{where}\quad \|f\|_\infty =
\sup_{i\in\mathbb{I}}\|f(i)\|, \] so that $\ell_\infty(\mathbb{I},E)$
is a Banach space with respect to the norm~$\|\cdot\|_\infty$.  The
following special conventions apply:
\begin{itemize}
\item $\ell_\infty(\mathbb{I}) = \ell_\infty(\mathbb{I},\C)$;
\item $\ell_\infty = \ell_\infty(\N);$
\item $E^n = \ell_\infty\bigl(\{1,\ldots,n\},E\bigr)$ for each
  $n\in\N$.
\end{itemize}

We write $E^*$ for the (continuous) dual space of the Banach
space~$E$. The duality bracket between~$E$ and~$E^*$ is
$\langle\,\cdot\,,\,\cdot\,\rangle$, while $\kappa_E\colon E\to
E^{**}$ denotes the canonical embedding of~$E$ into its bidual. By an
\emph{operator} we understand a bounded, linear operator between
Banach spaces; we write $\mathscr{B}(E,F)$ for the Banach space of all
operators from~$E$ to another Banach space~$F$, and denote by $T^*\in
\mathscr{B}(F^*,E^*)$ the adjoint of an
operator~$T\in\mathscr{B}(E,F)$.

We shall require the following standard notions for
\mbox{$T\in\mathscr{B}(E,F)\colon$}
\begin{romanenumerate}
\item\label{OpProp1} $T$ is a \emph{finite-rank operator} if it has
  finite-dimensional range. We write $\mathscr{F}(E,F)$ for the set of
  finite-rank operators from~$E$ to~$F$.  It is well known that
  \begin{equation*}
    \mathscr{F}(E,F) = \operatorname{span}\{ y\otimes \lambda : y\in
    F,\,\lambda\in E^*\},
  \end{equation*}
  where $y\otimes \lambda$ denotes the rank-one operator given by
   \[ y\otimes \lambda\colon\ 
  x\mapsto\langle x,\lambda\rangle y,\quad E\to F\quad (y\in F,\,
  \lambda\in E^*). \] The following elementary observation
   will be used several times:
  \begin{equation}\label{eqCompRankOneOp}
    \mbox{}\quad    R(y\otimes\lambda)S = (Ry)\otimes (S^*\lambda)\quad
    (\,y\in F,\,\lambda\in E^*),
  \end{equation}
  valid for any Banach spaces $D$, $E$, $F$, and $G$ and any
  $S\in\mathscr{B}(D,E)$ and $R\in\mathscr{B}(F,G)$.
\item\label{OpProp2} $T$ is \emph{compact} if the image under~$T$ of
  the unit ball of~$E$ is a relatively norm-compact subset of~$F$. We
  write $\mathscr{K}(E,F)$ for the set of compact operators from~$E$
  to~$F$.
\item\label{OpProp3} $T$ is \emph{weakly compact} if the image
  under~$T$ of the unit ball of~$E$ is a relatively weakly compact
  subset of~$F$. We write $\mathscr{W}(E,F)$ for the set of weakly
  compact operators from~$E$ to~$F$.
\item\label{OpProp4} $T$ is \emph{bounded below} if, for some
  $\epsilon>0$, we have $\|Tx\|\ge\epsilon\|x\|$ for each $x\in E$;
  or, equivalently, $T$ is injective and has closed range.  This
  notion is dual to surjectivity in the following precise sense
  (\emph{e.g.}, see \cite[Theorem~3.1.22]{meg}):
  \begin{equation}\label{dualitySurBB}
    \begin{array}{rcl}
      T\ \text{is surjective}\ \ &\Longleftrightarrow&\ \ T^*\
      \text{is bounded below,}\\
      T\ \text{is bounded below}\ \ &\Longleftrightarrow&\ \ T^*\
      \text{is 
        surjective.} \end{array} 
  \end{equation}
\item\label{OpProp5} $T$ is \emph{strictly singular} if no restriction
  of~$T$ to an infinite-dimensional subspace of~$E$ is bounded below;
  that is, for each $\epsilon >0$, each infinite-dimensional subspace
  of~$E$ contains a unit vector~$x$ with  $\|Tx\|\le\epsilon$.  We
  write $\mathscr{S}(E,F)$ for the set of strictly singular operators
  from~$E$ to~$F$.
\item\label{OpProp6} $T$ is a \emph{Fredholm operator} if its kernel
  is finite-dimensional and its range is finite-codimensional, in
  which case $T$ has closed range. 
\item\label{OpProp8} $T$ is an \emph{upper semi-Fredholm operator} if
  it has finite-dimensional kernel and closed range. 
\item\label{OpProp7} $T$ is \emph{inessential} if $I_E - ST$ is a
  Fredholm operator for each \mbox{$S\in\mathscr{B}(F,E)$}. We write
  $\mathscr{E}(E,F)$ for the set of inessential operators from~$E$
  to~$F$.
\end{romanenumerate}
The six classes~$\mathscr{B}$, $\mathscr{F}$, $\mathscr{K}$,
$\mathscr{W}$, $\mathscr{S}$, and~$\mathscr{E}$ introduced above
define operator ideals in the sense of Pietsch~\cite{pi}, all of which
except $\mathscr{F}$ are closed.  The following inclusions always
hold:
\begin{align*}
  \mathscr{F}(E,F)\subseteq \mathscr{K}(E,F) &\subseteq
  \mathscr{W}(E,F)\cap\mathscr{S}(E,F)\\ &\subseteq
  \mathscr{S}(E,F)\subseteq \mathscr{E}(E,F)\subseteq
  \mathscr{B}(E,F);
\end{align*}
no others are true in general (even in the case where $E=F$).  In line
with common practice, we set $\mathscr{I}(E) = \mathscr{I}(E,E)$ for
each of the above operator ideals~$\mathscr{I}$.

We remark that a left, right, or two-sided ideal of~$\mathscr{B}(E)$
is proper if and only if it does not contain the identity
operator~$I_E$.  This shows in particular that the two-sided ideal
$\mathscr{E}(E)$ (and hence also $\mathscr{F}(E)$, $\mathscr{K}(E)$,
and $\mathscr{S}(E)$) is proper whenever $E$ is infinite-dimensional.

\begin{remark}\label{KleineckeDefnIness}
The ideal of inessential operators on a single Banach space~$E$ was
originally introduced by Kleinecke~\cite{kl} as the preimage of the
Jacobson radical of the Calkin algebra
$\mathscr{B}(E)\big/\,\overline{\mathscr{F}(E)}$, following
Yood's observation~\cite[p.~615]{yo} that this radical may be
non-zero. The early theory of inessential operators is expounded in
the monograph~\cite{cpy} of Caradus, Pfaffen\-berger, and
Yood. Pietsch~\cite{pi} subsequently gave the `operator ideal'
definition of $\mathscr{E}(E,F)$ stated in~\romanref{OpProp7}, above,
and showed that it coincides with Kleinecke's original definition in
the case where $E=F$.
\end{remark}

The following notion is central to this paper.  Let~$\Gamma$ be a
non-empty subset of~$\mathscr{B}(E)$ for some Banach space~$E$.  The
\emph{left ideal generated by}~$\Gamma$ is the smallest left
ideal~$\mathscr{L}_{\Gamma}$ of~$\mathscr{B}(E)$ that
contains~$\Gamma$. It can be described explicitly as
\begin{equation}\label{eqLeftidealgenbyfam}
  \mathscr{L}_{\Gamma} = \biggl\{ \sum_{j=1}^n S_jT_j :  
  S_1,\ldots,S_n\in\mathscr{B}(E),\,
  T_1,\ldots,T_n\in\Gamma, \,n\in\N\biggr\}.
\end{equation}
A left ideal $\mathscr{L}$ of~$\mathscr{B}(E)$ is \emph{singly}
(respectively, \emph{finitely, countably}) \emph{gen\-er\-ated} if
$\mathscr{L} = \mathscr{L}_{\Gamma}$ for some singleton (respectively,
non-empty and finite, countable) subset~$\Gamma$ of~$\mathscr{B}(E)$.

In the case where~$\Gamma$ is a non-empty, norm-bounded subset
of~$\mathscr{B}(E)$, we can define an operator~$\Psi_{\Gamma}\colon
E\to\ell_\infty(\Gamma,E)$ by
\begin{equation}\label{defnPsiInfinite}
  (\Psi_\Gamma x)(T) = Tx\quad (x\in E,\, T\in\Gamma). \end{equation}
In particular, when~$\Gamma$ is finite, say $\Gamma =
\{T_1,\ldots,T_n\}$, where $n\in\N$ and $T_1,\ldots,T_n$ are distinct,
we shall identify~$\ell_\infty(\Gamma,E)$ with~$E^n$ in the natural way. Then
\begin{equation}\label{defnOpPsi} \Psi_\Gamma =
  \sum_{j=1}^n \iota_j T_j\in\mathscr{B}(E,E^n),
\end{equation}
where $\iota_j\colon E\to E^n$ denotes the canonical $j^{\text{th}}$
coordinate embedding, and \eqref{eqLeftidealgenbyfam} simplifies to
\begin{equation}\label{eqFGLeftideal04072012}
  \mathscr{L}_{\Gamma} = \biggl\{ \sum_{j=1}^n S_jT_j :
  S_1,\ldots,S_n\in\mathscr{B}(E)\biggr\} = \bigl\{ S\Psi_{\Gamma} :
  S\in\mathscr{B}(E^n,E)\bigr\}.
\end{equation} 
The operator~$\Psi_\Gamma$ will play a key role in our work.  We
shall give here only one, very simple, application of~$\Psi_\Gamma$,
showing that each finitely-generated left ideal of operators is
already singly generated for most `classical' Banach spaces.

\begin{proposition}\label{cartesianSGnew}
  Let $E$ be a Banach space which contains a complemented subspace
  that is isomorphic to~$E\oplus E$. Then each finitely-generated left
  ideal of~$\mathscr{B}(E)$ is singly generated.
\end{proposition}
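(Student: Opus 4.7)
The plan is to combine the finite list of generators into a single operator by exploiting the hypothesis to embed $E^n$ as a complemented subspace of $E$ for every $n\in\N$.

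The first step is to promote the case $n=2$ built into the hypothesis to all $n\in\N$. Starting from a decomposition $E\cong(E\oplus E)\oplus F$, splitting the first summand again by the same hypothesis and iterating produces, for each $n\in\N$, operators $J_n\in\mathscr{B}(E^n,E)$ and $P_n\in\mathscr{B}(E,E^n)$ that satisfy $P_nJ_n=I_{E^n}$.

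Given a finitely-generated left ideal $\mathscr{L}=\mathscr{L}_\Gamma$ with $\Gamma=\{T_1,\ldots,T_n\}$ consisting of distinct operators, I would then form $\Psi_\Gamma\in\mathscr{B}(E,E^n)$ as in~\eqref{defnOpPsi} and take $T=J_n\Psi_\Gamma\in\mathscr{B}(E)$ as the candidate single generator; the claim is that $\mathscr{L}_{\{T\}}=\mathscr{L}_\Gamma$.

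The verification is a direct computation. On the one hand, \eqref{eqFGLeftideal04072012} together with the expansion $\Psi_\Gamma=\sum_{j=1}^n\iota_jT_j$ from \eqref{defnOpPsi} yields $T=J_n\Psi_\Gamma=\sum_{j=1}^n(J_n\iota_j)T_j\in\mathscr{L}_\Gamma$, whence $\mathscr{L}_{\{T\}}\subseteq\mathscr{L}_\Gamma$. On the other hand, writing $\pi_j\colon E^n\to E$ for the $j$-th coordinate projection so that $\pi_j\Psi_\Gamma=T_j$, the identity $P_nJ_n=I_{E^n}$ gives $T_j=\pi_jP_nJ_n\Psi_\Gamma=(\pi_jP_n)T\in\mathscr{L}_{\{T\}}$ for each $j$, hence $\Gamma\subseteq\mathscr{L}_{\{T\}}$ and consequently $\mathscr{L}_\Gamma\subseteq\mathscr{L}_{\{T\}}$. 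No obstacle of substance arises here; the only point demanding care is the inductive construction of the complementing pair $(J_n,P_n)$, which is purely bookkeeping with iterated complementation.
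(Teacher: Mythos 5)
Your proposal is correct and follows essentially the same route as the paper: both embed $E^n$ complementably into $E$ via a pair of operators with $P_nJ_n=I_{E^n}$ and take $T=J_n\Psi_\Gamma$ as the single generator, the only cosmetic difference being that the paper verifies the reverse inclusion for a general element $R=S\Psi_\Gamma$ of $\mathscr{L}_\Gamma$ rather than just for the generators $T_j$.
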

\begin{proof} Let $\Gamma$ be a non-empty, finite subset
  of~$\mathscr{B}(E)$ with $n=|\Gamma|\in\N$.  By the assumption, $E$
  contains a complemented subspace which is isomorphic to~$E^n$, and
  hence there are operators $U\in\mathscr{B}(E^n,E)$ and
  $V\in\mathscr{B}(E,E^n)$ such that $I_{E^n} = VU$. We shall now
  complete the proof by showing that the left ideal
  $\mathscr{L}_{\Gamma}$ is generated by the single operator $T =
  U\Psi_{\Gamma}\in\mathscr{B}(E)$.

  By~\eqref{eqFGLeftideal04072012}, we have
  $T\in\mathscr{L}_{\Gamma}$, so
  that~$\mathscr{L}_{\{T\}}\subseteq\mathscr{L}_{\Gamma}$.

  Conversely, each  $R\in\mathscr{L}_{\Gamma}$ has the form $R
  = S\Psi_{\Gamma}$ for some $S\in\mathscr{B}(E^n,E)$
  by~\eqref{eqFGLeftideal04072012}, and therefore $R =
  S(VU)\Psi_{\Gamma} = (SV)T\in\mathscr{L}_{\{T\}}$.
\end{proof}

\begin{remark} Not all finitely-generated, maximal left ideals in a
  Banach algebra are singly generated.  For instance, let $$\mathbb{B}
  = \{(z,w)\in \C^2 :| z|^2 + |w|^2\leq 1\}$$ be the closed unit ball
  in $\C^2$, and consider the `polyball algebra'~$\mathscr{A}$
  on~$\mathbb{B}$, so that by definition~$\mathscr{A}$ is the closure
  with respect to the uniform norm of the polynomials in two variables
  restricted to~$\mathbb{B}$.  It is shown in
  \cite[Ex\-am\-ple~15.8]{stout} that the maximal ideal
  $\mathscr{M}=\{f\in \mathscr{A}: f(0,0) =0\}$ of~$\mathscr{A}$ is
  generated by the two coordinate functionals, but on the other hand,
  it is clear that~$\mathscr{M}$ is not singly generated.

  It is significantly harder to find a Banach space~$E$ such
  that~$\mathscr{B}(E)$ contains a maximal left ideal which is
  finitely, but not singly, generated.  Such an example has, however,
  recently been obtained~\cite{kanialaustsen}.
\end{remark}

Our next result collects some basic facts about the fixed maximal left
ideals of~$\mathscr{B}(E)$, most of which were already stated in the
Introduction.

\begin{proposition}\label{trivialfgmaxrightideals}
  Let $x$ and $y$ be non-zero elements of a Banach space~$E$. Then:
  \begin{romanenumerate}
  \item\label{trivialfgmaxrightideals1} the set
    $\mathscr{M}\!\mathscr{L}_x$ given by
    \eqref{trivialfgmaxrightidealsEq} is the left ideal of
    $\mathscr{B}(E)$ generated by the projection
    \mbox{$I_E-x\otimes\lambda$}, where $\lambda\in E^*$ is any
    functional such that $\langle x,\lambda\rangle = 1;$
  \item\label{trivialfgmaxrightideals2} the left ideal
    $\mathscr{M}\!\mathscr{L}_x$ is maximal;
  \item\label{trivialfgmaxrightideals3} $\mathscr{M}\!\mathscr{L}_x =
    \mathscr{M}\!\mathscr{L}_y$ if and only if $x$ and $y$ are
    proportional.
  \end{romanenumerate}
  In particular, $\mathscr{B}(E)$ contains~$|E|$ distinct, fixed
  maximal left ideals whenever $E$ is infinite-dimensional.
\end{proposition}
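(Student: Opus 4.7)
For part \romanref{trivialfgmaxrightideals1}, my plan is to first invoke the Hahn--Banach theorem to justify the existence of a functional $\lambda\in E^*$ with $\langle x,\lambda\rangle = 1$, so that $P=I_E-x\otimes\lambda$ is a well-defined projection. A direct calculation gives $Px = 0$, hence $P\in\mathscr{M}\!\mathscr{L}_x$, which yields one inclusion. For the converse, the key observation is that whenever $T\in\mathscr{M}\!\mathscr{L}_x$, the identity~\eqref{eqCompRankOneOp} combined with $Tx=0$ gives $TP = T - (Tx)\otimes\lambda = T$, exhibiting $T$ as an element of the left ideal generated by $P$.

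For part \romanref{trivialfgmaxrightideals2}, $\mathscr{M}\!\mathscr{L}_x$ is proper since $I_Ex=x\neq 0$. To prove maximality, I would take a left ideal $\mathscr{L}$ strictly containing $\mathscr{M}\!\mathscr{L}_x$ and exhibit some $T\in\mathscr{L}$ with $Tx\neq 0$; then, applying Hahn--Banach, choose $\mu\in E^*$ with $\langle Tx,\mu\rangle = 1$, so that the rank-one operator $x\otimes\mu$ combined with $T$ via~\eqref{eqCompRankOneOp} produces $(x\otimes\mu)T\in\mathscr{L}$, an element which agrees with $I_E$ at $x$. Consequently $(x\otimes\mu)T - I_E\in\mathscr{M}\!\mathscr{L}_x\subseteq\mathscr{L}$, forcing $I_E\in\mathscr{L}$, whence $\mathscr{L}=\mathscr{B}(E)$.

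For part \romanref{trivialfgmaxrightideals3}, one direction is immediate: if $y=\alpha x$ with $\alpha\in\C\setminus\{0\}$, then $Ty=0\Leftrightarrow Tx=0$. For the other direction, assume $x$ and $y$ are linearly independent, and use a standard Hahn--Banach/linear-algebra argument to produce $\lambda\in E^*$ with $\langle x,\lambda\rangle = 0$ and $\langle y,\lambda\rangle = 1$; the operator $y\otimes\lambda$ then kills $x$ but not $y$, separating the two ideals.

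Finally, the cardinality assertion follows from \romanref{trivialfgmaxrightideals3}, which tells us that the map $x\mapsto\mathscr{M}\!\mathscr{L}_x$ induces an injection from the set of one-dimensional subspaces of $E$ into the collection of fixed maximal left ideals; as $E$ is infinite-dimensional, the number of such subspaces coincides with $|E|$ (since quotienting $E\setminus\{0\}$ by the action of $\C\setminus\{0\}$ preserves cardinality in the infinite case), and this upper-bounds the count as well. No step presents a serious obstacle; the only mildly subtle point is ensuring in \romanref{trivialfgmaxrightideals2} that the witness $T\in\mathscr{L}\setminus\mathscr{M}\!\mathscr{L}_x$ can be leveraged through a rank-one operator to recover $I_E$, and this is where the identity~\eqref{eqCompRankOneOp} does the work cleanly.
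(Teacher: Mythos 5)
Your proposal is correct and follows essentially the same route as the paper's own proof: the identity $TP=T$ for $T\in\mathscr{M}\!\mathscr{L}_x$ in part \romanref{trivialfgmaxrightideals1}, the rank-one operator $x\otimes\mu$ recovering $I_E$ in part \romanref{trivialfgmaxrightideals2}, and a separating functional for linearly independent $x,y$ in part \romanref{trivialfgmaxrightideals3}. The only cosmetic differences are that you phrase maximality via a strictly larger ideal rather than via $\mathscr{M}\!\mathscr{L}_x+\mathscr{L}_{\{T\}}=\mathscr{B}(E)$, and you supply the (routine) cardinality argument that the paper leaves implicit.
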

\begin{proof} \romanref{trivialfgmaxrightideals1}.  Let $P =
  I_E-x\otimes\lambda$. The set $\mathscr{M}\!\mathscr{L}_x$ is
  clearly a left ideal which contains~$P$, and hence
  $\mathscr{L}_{\{P\}}\subseteq \mathscr{M}\!\mathscr{L}_x$. The
  reverse inclusion holds because
  $T(x\otimes\lambda) = 0$ for each $T\in\mathscr{M}\!\mathscr{L}_x$,
  so that $T = TP\in \mathscr{L}_{\{P\}}$.

  \romanref{trivialfgmaxrightideals2}. The left ideal
  $\mathscr{M}\!\mathscr{L}_x$ is evidently proper.  To verify that it
  is maximal, suppose that $T\in \mathscr{B}(E)\setminus
  \mathscr{M}\!\mathscr{L}_x$. Then $Tx\neq 0$, so that $\langle
  Tx,\mu\rangle = 1$ for some $\mu\in E^*$. The operator $S
  = I_E - (x\otimes\mu)T$ belongs to~$\mathscr{M}\!\mathscr{L}_x$
  because $(x\otimes\mu)Tx = \langle Tx,\mu\rangle x = x$, and
  consequently \[ I_E = S +
  (x\otimes\mu)T\in\mathscr{M}\!\mathscr{L}_x +
  \mathscr{L}_{\{T\}}. \]

  \romanref{trivialfgmaxrightideals3}. It is clear that
  $\mathscr{M}\!\mathscr{L}_x = \mathscr{M}\!\mathscr{L}_y$ if $x$ and
  $y$ are proportional. We prove the converse by
  contraposition. Suppose that $x$ and $y$ are linearly
  independent. Then we can take $\lambda\in E^*$ such that
  $\langle x,\lambda\rangle = 1$ and $\langle y,\lambda\rangle = 0$,
  and hence $x\otimes\lambda\in\mathscr{M}\!\mathscr{L}_y\setminus
  \mathscr{M}\!\mathscr{L}_x$.
\end{proof}

We conclude this preliminary section with the observation that the
answer to the analogue of Question~\romanref{DalesQ2} for two-sided
ideals is negative, as the following example shows.

\begin{example} Consider the Hilbert space $H = \ell_2(\aleph_1)$, 
  and take a projection $P\in\mathscr{B}(H)$ with separable,
  infinite-dimensional range.  The ideal classification of
  Gramsch~\cite{gr} and Luft~\cite{luft} implies that the
  ideal~$\mathscr{X}(H)$ of operators with separable range is the
  unique maximal two-sided ideal of~$\mathscr{B}(H)$. Given
  $T\in\mathscr{X}(H)$, let $Q\in\mathscr{B}(H)$ be the orthogonal
  projection onto~$\overline{T(H)}$. Then $T = QT$, and also $Q = VPU$
  for some operators $U,V\in\mathscr{B}(H)$, so that $T =
  VPUT$. Hence~$\mathscr{X}(H)$ is the two-sided ideal
  of~$\mathscr{B}(H)$ generated by the single operator~$P$.
  Since~$\mathscr{X}(H)$ is the only maximal two-sided ideal
  of~$\mathscr{B}(H)$, we conclude that {each} maximal two-sided ideal
  of~$\mathscr{B}(H)$ is singly generated, and therefore the analogue
  of Question~\romanref{DalesQ2} for two-sided ideals has a negative
  answer.

  With slightly more work, we can give a similar example based on a
  separable Banach space. To this end, consider the $p^{\text{th}}$
  quasi-reflexive James space~$J_p$ for some
  $p\in(1,\infty)$. Edelstein and Mityagin~\cite{em} observed that the
  two-sided ideal~$\mathscr{W}(J_p)$ of weakly compact opera\-tors is
  maximal because it has codimension one in~$\mathscr{B}(J_p)$,
  and~$\mathscr{B}(J_p)$ contains no other maximal two-sided ideals by
  \cite[Theorem~4.16]{lau1}.  We shall now show
  that~$\mathscr{W}(J_p)$ is singly generated as a two-sided
  ideal. Let \[ J_p^{(n)} = \bigl\{(\alpha_j)_{j\in\N}\in J_p :
  \alpha_j = 0\ (j>n)\bigr\}\quad (n\in\N).\] Then the Banach space
  \[ J_p^{(\infty)} = \biggl(\bigoplus_{n\in\N}
  J_p^{(n)}\biggr)_{\ell_p} \] is reflexive and isomorphic to a
  complemented subspace of~$J_p$. (The latter observation is due to
  Edelstein and Mityagin~\cite[Lemma~6(d)]{em}; an alter\-na\-tive
  approach can be found in~\cite[Proposition~4.4(iv)]{lau1}.)  Take a
  projec\-tion \mbox{$P\in\mathscr{B}(J_p)$} whose range is isomorphic
  to $J_p^{(\infty)}$. By~\cite[Theorem~4.3]{lau2}, we have
  \begin{align*} \mathscr{W}(J_p) &= \bigl\{ TS :
    S\in\mathscr{B}(J_p,J_p^{(\infty)}),\,
    T\in\mathscr{B}(J_p^{(\infty)}, J_p)\bigr\}\\ &= \bigl\{VPU :
    U,V\in\mathscr{B}(J_p)\bigr\},
  \end{align*}
  so that $\mathscr{W}(J_p)$ is the two-sided ideal
  of~$\mathscr{B}(J_p)$ generated by the single operator~$P$.
  On the other hand, Corollary~\ref{propWCopfg}, below, will show that
  $\mathscr{W}(J_p)$ is not finitely generated as a left ideal because
  $J_p$ is non-reflexive.
\end{example}

\section{Counting maximal left ideals}%
\label{section_countingmaxideals}
\noindent
Let~$E$ be an infinite-dimensional Banach space. An infinite family
$(E_\gamma)_{\gamma\in\Gamma}$ of non-zero, closed subspaces of~$E$ is
an \emph{unconditional Schauder decomposition} of~$E$ if, for each
$x\in E$, there is a unique family $(x_\gamma)_{\gamma\in\Gamma}$ with
$x_\gamma\in E_\gamma$ for each $\gamma\in\Gamma$ such that the series
$\sum_{\gamma\in\Gamma}x_\gamma$ converges unconditionally to~$x$.  In
this case we can associate a projection $P_\Upsilon\in\mathscr{B}(E)$
with each subset~$\Upsilon$ of~$\Gamma$ by the definitions
\begin{equation}\label{defnprojPA}
P_\emptyset = 0\quad \text{and}\quad P_\Upsilon x = \sum_{\gamma\in
  \Upsilon}x_\gamma\quad (x\in E)\quad \text{for}\quad
\Upsilon\neq\emptyset, \end{equation}
 where $(x_\gamma)_{\gamma\in\Gamma}$ is related to~$x$ as above.

Using this notion, we can transfer a classical algebraic result of
Rosenberg~\cite{Rosenberg} to~$\mathscr{B}(E)$. 

\begin{proposition}\label{countingleftideals} Let $E$ be a non-zero Banach space
  with an unconditional Schauder decomposition
  $(E_\gamma)_{\gamma\in\Gamma}$. Then the Banach algebra
  $\mathscr{B}(E)$ contains at least $2^{2^{|\Gamma|}}$ maximal left
   ideals which are not fixed.
\end{proposition}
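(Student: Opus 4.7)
The plan is to associate, with each ultrafilter~$\mathcal{U}$ on~$\Gamma$, a proper left ideal of~$\mathscr{B}(E)$ built from the projections~$P_\Upsilon$ of~\eqref{defnprojPA}, extend each to a maximal left ideal~$\mathscr{N}_\mathcal{U}$ by Krull's theorem, and check that the map $\mathcal{U}\mapsto\mathscr{N}_\mathcal{U}$ is injective and produces only non-fixed ideals whenever~$\mathcal{U}$ is free. Since the family $(E_\gamma)_{\gamma\in\Gamma}$ is infinite, there are $2^{2^{|\Gamma|}}$ free ultrafilters on~$\Gamma$, which will give the required count.

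First I would record three Boolean-algebraic identities that drop out of uniqueness of the decomposition:
\[ P_\Upsilon P_{\Upsilon'} = P_{\Upsilon\cap\Upsilon'},\qquad P_\Upsilon + P_{\Gamma\setminus\Upsilon} = I_E,\qquad P_\Upsilon = 0\ \text{iff}\ \Upsilon = \emptyset, \]
the last using that each~$E_\gamma$ is non-zero. Given an ultrafilter~$\mathcal{U}$ on~$\Gamma$, set
\[ \mathscr{M}_\mathcal{U} = \bigl\{T\in\mathscr{B}(E) : TP_A = 0 \text{ for some } A\in\mathcal{U}\bigr\}. \]
Closure of~$\mathscr{M}_\mathcal{U}$ under left multiplication by~$\mathscr{B}(E)$ is automatic; closure under addition follows because if $T_jP_{A_j}=0$ with $A_j\in\mathcal{U}$ for $j=1,2$, then $(T_1+T_2)P_{A_1\cap A_2}=0$, using $P_{A_j}P_{A_1\cap A_2} = P_{A_1\cap A_2}$. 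Properness is the observation that $I_E\in\mathscr{M}_\mathcal{U}$ would force $P_A = 0$, hence $\emptyset\in\mathcal{U}$, which is absurd.

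The crux is showing that distinct ultrafilters produce distinct maximal extensions. For this I would first note that $P_B\in\mathscr{M}_\mathcal{U}$ iff $B\notin\mathcal{U}$: indeed $P_BP_A = P_{B\cap A}$ vanishes exactly when $B\cap A = \emptyset$, and such an $A\in\mathcal{U}$ exists precisely when $\Gamma\setminus B\in\mathcal{U}$. Now if $\mathcal{U}\neq\mathcal{U}'$, one may pick $A\in\mathcal{U}\setminus\mathcal{U}'$, and then $P_{\Gamma\setminus A}\in\mathscr{M}_\mathcal{U}\subseteq\mathscr{N}_\mathcal{U}$ while $P_A\in\mathscr{M}_{\mathcal{U}'}\subseteq\mathscr{N}_{\mathcal{U}'}$; were the two maximal ideals equal, their sum would contain $P_A + P_{\Gamma\setminus A} = I_E$, contradicting properness.

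Finally, to see that~$\mathscr{N}_\mathcal{U}$ is not fixed when~$\mathcal{U}$ is free, suppose $\mathscr{N}_\mathcal{U} = \mathscr{M}\!\mathscr{L}_x$ for some $x\in E\setminus\{0\}$ with decomposition $x = \sum_\gamma x_\gamma$. Because $\mathcal{U}$ is free, each singleton $\{\gamma\}$ lies outside $\mathcal{U}$, so $P_{\{\gamma\}}\in\mathscr{M}_\mathcal{U}\subseteq\mathscr{M}\!\mathscr{L}_x$, forcing $x_\gamma = P_{\{\gamma\}}x = 0$ for every~$\gamma$ and hence $x = 0$, a contradiction. I expect the only step requiring real care to be the injectivity of $\mathcal{U}\mapsto\mathscr{N}_\mathcal{U}$, which hinges on the complementation identity $P_A + P_{\Gamma\setminus A} = I_E$; everything else is a routine verification using the Boolean identities and the definition of a fixed maximal left ideal.
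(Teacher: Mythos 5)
Your argument is correct and is essentially the paper's proof in dual form: free ultrafilters $\mathcal{U}$ on~$\Gamma$ correspond under complementation to the maximal Boolean ideals $\mathfrak{M}$ of $\mathfrak{P}(\Gamma)$ used in the paper, and since $TP_A=0$ if and only if $T=TP_{\Gamma\setminus A}$, your ideal $\mathscr{M}_{\mathcal{U}}$ coincides with the left ideal generated by the projections $P_\Upsilon$ with $\Upsilon$ in the dual ideal of~$\mathcal{U}$, which is exactly the paper's $\mathscr{L}_{\mathsf{P}(\mathfrak{M})}$. The properness, injectivity, and non-fixedness checks then run just as in the paper; the only cosmetic difference is that the paper restricts to uniform ultrafilters (those whose dual ideal contains all $\Upsilon$ with $|\Upsilon|<|\Gamma|$), which does not affect the count.
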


\begin{proof}
  The power set~$\mathfrak{P}(\Gamma)$ of~$\Gamma$ is a Boolean
  algebra, and $$\mathfrak{I} = \bigl\{\Upsilon\in\mathfrak{P}(\Gamma) :
  |\Upsilon|<|\Gamma|\bigr\}$$ is a proper Boolean ideal
  of~$\mathfrak{P}(\Gamma)$.  Since~$\Gamma$ is infinite, a classical
  result of Posp\'{\i}\v{s}il (see~\cite{Pospisil}, or \cite[Corollary
  7.4]{cn} for an exposition) states that the
  collection~$\mathbb{M}_{\mathfrak{I}}$ of maximal Boolean ideals
  of~$\mathfrak{P}(\Gamma)$ containing~$\mathfrak{I}$ has
  cardinality~$2^{2^{|\Gamma|}}$.

  For each $\mathfrak{M}\in\mathbb{M}_{\mathfrak{I}}$, let
  $\mathsf{P}(\mathfrak{M}) = \{ P_\Upsilon :
  \Upsilon\in\mathfrak{M}\}\subseteq\mathscr{B}(E)$,
  where~$P_\Upsilon$ is the projection given
  by~\eqref{defnprojPA}. Assume towards a contradiction that the left
  ideal $\mathscr{L}_{\mathsf{P}(\mathfrak{M})}$ is not proper. Then,
  for some $n\in\N$, there are operators $T_1,\ldots,T_n\in
  \mathscr{B}(E)$ and sets $\Upsilon_1,\ldots,\Upsilon_n\in
  \mathfrak{M}$ such that $I_E = \sum_{j=1}^n T_j P_{\Upsilon_j}$.
  Right-composing both sides of this identity with the projection
  $P_{\Gamma\setminus \Upsilon}$, where $\Upsilon=\bigcup_{j=1}^n
  \Upsilon_j\in\mathfrak{M}$, we obtain $P_{\Gamma\setminus \Upsilon}
  = 0$, so that $\Gamma = \Upsilon\in\mathfrak{M}$, which contradicts
  the fact that~$\mathfrak{M}$ is a proper Boolean ideal.

  We can therefore choose a maximal left
  ideal~$\mathscr{M}_{\mathfrak{M}}$ of~$\mathscr{B}(E)$ such that
  $\mathscr{L}_{\mathsf{P}(\mathfrak{M})}\subseteq
  \mathscr{M}_{\mathfrak{M}}$.  This maximal left ideal
  $\mathscr{M}_{\mathfrak{M}}$ cannot be fixed because, for each $x\in
  E\setminus\{0\}$, we have $x =
  \sum_{\gamma\in\Gamma}P_{\{\gamma\}}x$, so that $P_{\{\gamma\}}x\neq
  0$ for some $\gamma\in\Gamma$. Hence $P_{\{\gamma\}}\notin
  \mathscr{M}\!\mathscr{L}_x$, but on the other hand
  $P_{\{\gamma\}}\in\mathscr{L}_{\mathsf{P}(\mathfrak{M})}\subseteq
  \mathscr{M}_{\mathfrak{M}}$ since
  $\{\gamma\}\in\mathfrak{I}\subseteq\mathfrak{M}$.

  Consequently we have a mapping $\mathfrak{M}\mapsto
  \mathscr{M}_{\mathfrak{M}}$ from~$\mathbb{M}_{\mathfrak{I}}$ into
  the set of non-fixed, maximal left ideals of~$\mathscr{B}(E)$.  We
  shall complete the proof by showing that this mapping is
  injective. Suppose that
  $\mathfrak{M},\mathfrak{N}\in\mathbb{M}_{\mathfrak{I}}$ are
  distinct, and take a set
  $\Upsilon\in\mathfrak{M}\setminus\mathfrak{N}$.  The maximality
  of~$\mathfrak{N}$ implies that $\Gamma\setminus
  \Upsilon\in\mathfrak{N}$, and therefore \[ I_E = P_\Upsilon +
  P_{\Gamma\setminus
    \Upsilon}\in\mathscr{L}_{\mathsf{P}(\mathfrak{M})} +
  \mathscr{L}_{\mathsf{P}(\mathfrak{N})}\subseteq
  \mathscr{M}_\mathfrak{M} + \mathscr{M}_\mathfrak{N}. \] Thus, since
  the left ideals $\mathscr{M}_\mathfrak{M}$ and
  $\mathscr{M}_\mathfrak{N}$ are proper, they are distinct.
\end{proof}

\begin{corollary}\label{wehaveanideal}
  Let $E$ be a non-zero Banach space with an unconditional Schauder
  decomp\-osition $(E_\gamma)_{\gamma\in \Gamma}$, and suppose that $E$
  contains a dense subset~$D$ such that $2^{|D|} < 2^{2^{|\Gamma|}}$.
  Then $\mathscr{B}(E)$ contains at least $2^{2^{|\Gamma|}}$ maximal
  left ideals which are not finitely generated.
\end{corollary}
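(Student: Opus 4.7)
The plan is to combine Proposition~\ref{countingleftideals} with a cardinality estimate, arguing that the collection of finitely-generated left ideals of~$\mathscr{B}(E)$ is too small to exhaust the $2^{2^{|\Gamma|}}$ non-fixed maximal left ideals produced there. Concretely, I would show that the total number of finitely-generated left ideals is bounded above by~$2^{|D|}$, which by hypothesis is strictly smaller than~$2^{2^{|\Gamma|}}$; a standard subtraction of infinite cardinals then gives at least $2^{2^{|\Gamma|}}$ non-fixed maximal left ideals outside this class.

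The key computation is an upper bound for~$|\mathscr{B}(E)|$. Since~$\Gamma$ is infinite by the definition of an unconditional Schauder decomposition, $E$ is infinite-dimensional and hence the dense set~$D$ is infinite. Each element of~$E$ is the limit of a Cauchy sequence in~$D$, giving $|E| \leq |D|^{\aleph_0}$; each bounded operator is uniquely determined by its restriction to~$D$, so that
\[
|\mathscr{B}(E)| \leq |E|^{|D|} \leq \bigl(|D|^{\aleph_0}\bigr)^{|D|} = |D|^{|D|} = 2^{|D|},
\]
using $\aleph_0\cdot|D|=|D|$ and $|D|^{|D|}=2^{|D|}$ for infinite~$|D|$. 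The number of finite subsets of an infinite set equals its cardinality, so the collection of finite subsets of~$\mathscr{B}(E)$, and \emph{a fortiori} the collection of finitely-generated left ideals of~$\mathscr{B}(E)$, has cardinality at most~$2^{|D|}$.

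To finish, let~$\mathcal{A}$ denote the set of non-fixed maximal left ideals of~$\mathscr{B}(E)$ and~$\mathcal{B}$ the set of finitely-generated left ideals of~$\mathscr{B}(E)$. By Proposition~\ref{countingleftideals}, $|\mathcal{A}|\geq 2^{2^{|\Gamma|}}$, while the previous paragraph yields $|\mathcal{B}|\leq 2^{|D|}<2^{2^{|\Gamma|}}$. Since the equality $|\mathcal{A}|=|\mathcal{A}\setminus\mathcal{B}|+|\mathcal{A}\cap\mathcal{B}|$ combined with $|\mathcal{A}\cap\mathcal{B}|\leq|\mathcal{B}|<|\mathcal{A}|$ forces $|\mathcal{A}\setminus\mathcal{B}|=|\mathcal{A}|\geq 2^{2^{|\Gamma|}}$, the members of~$\mathcal{A}\setminus\mathcal{B}$ provide the required family of maximal left ideals that are neither fixed nor finitely generated.

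I do not anticipate any real obstacle: the cardinality estimate $|\mathscr{B}(E)|\leq 2^{|D|}$ is the only non-trivial ingredient, and it is a standard use of density together with elementary cardinal arithmetic. The crux lies not in the proof but in having the counting result Proposition~\ref{countingleftideals} available, which supplies the exponential excess $2^{2^{|\Gamma|}}$ over the ambient size $2^{|D|}$.
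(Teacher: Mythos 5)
Your proposal is correct and follows essentially the same route as the paper: bound $|\mathscr{B}(E)|$ by $2^{|D|}$ via density, count the finitely-generated left ideals, and subtract from the $2^{2^{|\Gamma|}}$ non-fixed maximal left ideals supplied by Proposition~\ref{countingleftideals}. The only cosmetic difference is that the paper counts countably-generated ideals (which also number at most $2^{|D|}$) and so implicitly gets the marginally stronger conclusion that the ideals produced are not even countably generated.
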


\begin{proof} 
  Since each element of~$E$ is the limit point of a sequence in~$D$,
  we have $|E|\le|D|^{\aleph_0}$. Further, each operator on~$E$ is
  uniquely determined by its action on~$D$, and consequently
  \begin{equation}\label{cardBE} |\mathscr{B}(E)|\le |E^D| =
    |E|^{|D|}\le  \bigl(|D|^{\aleph_0}\bigr)^{|D|} = |D|^{|D|} =
    2^{|D|}, 
  \end{equation} 
  where the final equality follows from \cite[Lemma~5.6]{jech}, for
  example.  Hence $\mathscr{B}(E)$ contains at most
  $\bigl(2^{|D|}\bigr)^{\aleph_0} = 2^{|D|}$ countable subsets, so
  that $\mathscr{B}(E)$ contains at most $2^{|D|}$ countably-generated
  left ideals. On the other hand, Proposition~\ref{countingleftideals}
  implies that there are at least $2^{2^{|\Gamma|}}$ distinct maximal
  left ideals of~$\mathscr{B}(E)$.  We have $2^{|D|}<2^{2^{|\Gamma|}}$
  by the assumption, and hence $\mathscr{B}(E)$ contains at least
  $2^{2^{|\Gamma|}}$ maximal left ideals which are not countably
  generated, and thus not finitely generated.
\end{proof}

The most important case of this corollary is as follows.
\begin{corollary}\label{countablescauderdecomp}
  Let $E$ be a non-zero, separable Banach space with an unconditional Schau\-der
  de\-com\-po\-si\-tion $(E_\gamma)_{\gamma\in\Gamma}$.  Then
  $\mathscr{B}(E)$ contains precisely $2^{\,\mathfrak{c}}$ maximal
  left ideals which are not finitely generated. \end{corollary}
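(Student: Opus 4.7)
The plan is to derive this corollary by feeding the hypotheses into Corollary~\ref{wehaveanideal} and then matching the resulting lower bound with an upper bound from the same cardinal arithmetic. First, I would pin down~$|\Gamma|$. Separability of~$E$ combined with the uniform boundedness of the coordinate projections~$P_{\{\gamma\}}$ of an unconditional Schauder decomposition forces~$\Gamma$ to be countable: picking a unit vector $x_\gamma\in E_\gamma$ for each~$\gamma$, the identity $P_{\{\gamma\}}(x_\gamma - x_\delta) = x_\gamma$ valid for $\gamma\neq\delta$ gives $\|x_\gamma - x_\delta\|\ge 1/\sup_{\gamma}\|P_{\{\gamma\}}\|$, so separability prohibits uncountably many~$\gamma$. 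Since~$\Gamma$ is infinite by the definition recalled just before Proposition~\ref{countingleftideals}, this yields $|\Gamma| = \aleph_0$, and hence $2^{|\Gamma|} = \mathfrak{c}$ and $2^{2^{|\Gamma|}} = 2^{\,\mathfrak{c}}$.

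Next, I would take a countable dense subset $D\subseteq E$, so that $|D| = \aleph_0$ and $2^{|D|} = \mathfrak{c}$. The strict inequality $2^{|D|} = \mathfrak{c} < 2^{\,\mathfrak{c}} = 2^{2^{|\Gamma|}}$ is exactly the hypothesis of Corollary~\ref{wehaveanideal}, which therefore produces at least $2^{\,\mathfrak{c}}$ maximal left ideals of~$\mathscr{B}(E)$ that are not finitely generated.

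For the matching upper bound I would reuse the cardinality estimate~\eqref{cardBE} from the proof of Corollary~\ref{wehaveanideal}: it yields $|\mathscr{B}(E)|\le 2^{|D|} = \mathfrak{c}$, so the total number of subsets of~$\mathscr{B}(E)$, and in particular the number of left ideals (maximal or otherwise), is bounded by $2^{\,\mathfrak{c}}$. Combining the two bounds gives exactly $2^{\,\mathfrak{c}}$ non-finitely-generated maximal left ideals, as required.

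No step here looks genuinely hard; the only point that deserves explicit care is the countability of~$\Gamma$, since the paper states the unconditional-Schauder-decomposition machinery for a general index set and one must invoke the uniform boundedness of the~$P_{\{\gamma\}}$ to conclude~$|\Gamma|=\aleph_0$ from separability. Once that is dispatched the corollary is essentially a bookkeeping exercise in cardinal arithmetic on top of Corollary~\ref{wehaveanideal}.
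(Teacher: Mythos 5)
Your proposal is correct and follows essentially the same route as the paper: countability of $\Gamma$ from separability, the lower bound from Corollary~\ref{wehaveanideal}, and the upper bound $|\mathscr{B}(E)|\le\mathfrak{c}$ from~\eqref{cardBE}. The only difference is that the paper asserts the countability of~$\Gamma$ without proof, whereas you supply a correct justification via the uniform boundedness of the projections~$P_{\{\gamma\}}$.
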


\begin{proof}
The index set~$\Gamma$ is necessarily countable because~$E$ is
separable. Hence, by Corollary~\ref{wehaveanideal}, $\mathscr{B}(E)$
contains at least~$2^{\,\mathfrak{c}}$ maximal left ideals which are
not finitely generated. On the other hand, \eqref{cardBE} implies that
$\mathscr{B}(E)$ has cardinality~$\mathfrak{c}$, so that
$\mathscr{B}(E)$ contains no more than $2^{\,\mathfrak{c}}$ distinct
subsets.
\end{proof}

\begin{example}\label{examplesofnumberofmaxleftideals} 
  \begin{romanenumerate} 
  \item\label{examplesofnumberofmaxleftideals1} Let $E$ be a Banach
    space with an unconditional Schau\-der basis~$(e_n)_{n\in\N}$.  Then
    $E$ satisfies the conditions of
    Corollary~\ref{countablescauderdecomp}, and hence $\mathscr{B}(E)$
    contains~$2^{\,\mathfrak{c}}$ maximal left ideals which are not
    finitely generated.

    The class of Banach spaces which have an unconditional Schauder
    basis is large and includes for instance the classical sequence
    spaces~$c_0$ and~$\ell_p$ for $p\in [1,\infty)$, the Lebesgue
    spaces~$L_p[0,1]$ for $p\in (1,\infty)$, the Lorentz and Orlicz
    sequence spaces~$d_{w,p}$ and~$h_M$ (\emph{e.g.}, see
    \cite[Chapter~4]{lt1}), the Tsirel\-son space~$T$ (\emph{e.g.},
    see \cite[Example~2.e.1]{lt1}), and the Schlum\-precht space~$S$
    (see \cite[Proposition 2]{Sch}).
  \item\label{examplesofnumberofmaxleftideals2} Suppose that $E$ is a
    Banach space   containing an infinite-di\-men\-sional, closed,
    complemented subspace~$F$ with  an unconditional Schauder
    decomposition $(F_\gamma)_{\gamma\in\Gamma}$. Then $E$ also has an
    unconditional Schauder decomposition, obtained by adding any
    closed, complementary subspace of~$F$ to the collection
    $(F_\gamma)_{\gamma\in\Gamma}$.

    In particular,
    generalizing~\romanref{examplesofnumberofmaxleftideals1}, we see
    that each separable Banach space~$E$ which contains an
    infinite-dimensional, closed, complemented subspace with an
    unconditional Schauder basis satisfies the conditions of
    Corollary~\ref{countablescauderdecomp}, and hence $\mathscr{B}(E)$
    contains $2^{\,\mathfrak{c}}$ maximal left ideals that are not
    finitely generated.  This applies for instance to \mbox{$E =
      L_1[0,1]$} because it contains a complemented copy of~$\ell_1$
    (\emph{e.g.}, see~\cite[Lemma~5.1.1]{ak}); to $E = C(K)$ for any
    infinite, compact metric space~$K$ because $E$ contains a
    complemented copy of~$c_0$ (\emph{e.g.},
    see~\cite[Proposition~4.3.11]{ak}); to $E = J_p$ for
    $p\in(1,\infty)$, the $p^{\text{th}}$ quasi-reflexive James space,
    because $J_p$ contains a complemented copy of~$\ell_p$
    (see~\cite[Lemma~6(d)]{em} or~\cite[Proposition~4.4(iii)]{lau1});
    and to $E = \mathscr{K}(X)$, where~$X$ is any Banach space with an
    unconditional Schauder basis, because~$E$ contains a complemented
    copy of~$c_0$ consisting of the compact operators whose matrix
    representation with respect to the unconditional Schauder basis is
    diagonal.
  \item\label{examplesofnumberofmaxleftideals3} There are separable
    Banach spaces~$E$ such that~$E$ has a un\-con\-di\-tion\-al
    Schauder decomposition $(E_n)_{n\in\N}$ with each~$E_n$
    finite-dimensional, but~$E$ does not have an un\-condi\-tional
    Schauder basis, notably Kalton and Peck's twisted
    $\ell_p$-spaces~$Z_p$ for $p\in(1,\infty)$ (see
    \cite[Corollary~9]{kaltonpeck} and the remark following it). Each
    such Banach space~$E$ satisfies the conditions of
    Corollary~\ref{countablescauderdecomp}, and hence $\mathscr{B}(E)$
    contains $2^{\,\mathfrak{c}}$ maximal left ideals which are not
    finitely generated.
  \end{romanenumerate}
\end{example}

\begin{remark} Corollary~\ref{countablescauderdecomp} is not true for
  all separable, infinite-dimen\-sional Banach spaces. Indeed, we shall
  show in Theorem~\ref{fewopsimpliesfixed}, below, that there are
  separable, infinite-dimensional Banach spaces~$E$ such that
  $\mathscr{B}(E)$ contains just one maximal left ideal which is not
  fixed, and this ideal is not finitely generated.
\end{remark}

\section{Proofs of the Dichotomy Theorems~\ref{dichotomythm}
  and~\ref{dich2}}%
\label{sectionLcontainsFE}
\noindent
The main purpose of this section is to prove
Theorems~\ref{dichotomythm} and~\ref{dich2}. We begin with the former,
whose proof is elementary.

\begin{proof}[Proof of Theorem~{\normalfont{\ref{dichotomythm}}}] 
  Let~$\mathscr{L}$ be a maximal left ideal of~$\mathscr{B}(E)$ such
  that $\mathscr{F}(E)\not\subseteq\mathscr{L}$.  Then there exist
  $x\in E$ and $\lambda\in E^*$ such that
  $x\otimes\lambda\notin\mathscr{L}$. The maximality of~$\mathscr{L}$
  implies that $I_E - T(x\otimes\lambda)\in\mathscr{L}$ for some
  operator \mbox{$T\in\mathscr{B}(E)$}, and $Tx\ne 0$
  because~$\mathscr{L}$ is proper. Choose $\mu\in E^*$ such that
  $\langle Tx,\mu\rangle = 1$, and define $P = I_E -
  Tx\otimes\mu\in\mathscr{B}(E)$. We then have $PTx = 0$, so that $P =
  P(I_E - T(x\otimes\lambda))\in\mathscr{L}$, and hence
  $\mathscr{M}\!\mathscr{L}_{Tx}\subseteq\mathscr{L}$ by
  Proposition~\ref{trivialfgmaxrightideals}%
  \romanref{trivialfgmaxrightideals1}.  Consequently these two maximal
  left ideals are equal, which shows that~$\mathscr{L}$ is fixed.
\end{proof}

Theorem~\ref{dichotomythm} easily leads to
the following stronger conclusion.

\begin{corollary}[Strong dichotomy for maximal left
  ideals]\label{June12dich1} Let~$E$ be a non-zero Banach space.
  Then, for each maximal left ideal~$\mathscr{L}$ of~$\mathscr{B}(E)$,
  exactly one of the following two alternatives holds:
  \begin{romanenumerate}
  \item\label{June12dich1i} $\mathscr{L}$ is fixed; or
  \item\label{June12dich1ii} $\mathscr{L}$ contains~$\mathscr{E}(E)$.
  \end{romanenumerate}
\end{corollary}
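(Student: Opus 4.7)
The plan is to bootstrap Corollary~\ref{June12dich1} directly from Theorem~\ref{dichotomythm} by upgrading the ideal $\mathscr{F}(E)$ in alternative~\romanref{dichotomythm2} to the larger ideal $\mathscr{E}(E)$ of inessential operators. Suppose $\mathscr{L}$ is a maximal left ideal of $\mathscr{B}(E)$. By Theorem~\ref{dichotomythm}, either $\mathscr{L}$ is fixed, in which case we are done, or $\mathscr{F}(E)\subseteq\mathscr{L}$; so we may assume the latter and aim to prove that in fact $\mathscr{E}(E)\subseteq\mathscr{L}$.

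First I would record the standard fact that, since $\mathscr{B}(E)$ is a unital Banach algebra, every maximal left ideal is automatically closed: the closure of a proper left ideal avoids the open set of invertible elements (which is a neighbourhood of $I_E$), so it is again proper, and then maximality forces equality. Applying this to $\mathscr{L}$, we conclude that $\mathscr{L}$ is closed and hence contains $\overline{\mathscr{F}(E)}$.

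Next I would pass to the Calkin algebra $\mathscr{C}(E):=\mathscr{B}(E)\big/\,\overline{\mathscr{F}(E)}$ via the quotient homomorphism $\pi\colon\mathscr{B}(E)\to\mathscr{C}(E)$. Since $\mathscr{L}$ is a maximal left ideal containing $\ker\pi$, its image $\pi(\mathscr{L})$ is a maximal left ideal of~$\mathscr{C}(E)$. The Jacobson radical of any unital ring is the intersection of all its maximal left ideals, so the Jacobson radical of $\mathscr{C}(E)$ is contained in $\pi(\mathscr{L})$. By the Kleinecke description of the inessential operators recalled in Remark~\ref{KleineckeDefnIness}, this Jacobson radical is exactly $\pi\bigl(\mathscr{E}(E)\bigr)$; pulling back through $\pi$ and using that $\mathscr{L}\supseteq\ker\pi$ yields $\mathscr{E}(E)\subseteq\mathscr{L}$, as desired.

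Finally I would verify that the two alternatives are mutually exclusive: since $\mathscr{F}(E)\subseteq\mathscr{E}(E)$, any ideal containing $\mathscr{E}(E)$ contains every rank-one operator $x\otimes\lambda$, whereas the fixed ideal $\mathscr{M}\!\mathscr{L}_x$ fails to contain $x\otimes\lambda$ whenever $\langle x,\lambda\rangle\neq 0$, as already noted in the Introduction. The only step that is not purely formal is the identification of the Jacobson radical of the Calkin algebra with $\pi(\mathscr{E}(E))$, and this is precisely the content of Kleinecke's definition together with the compatibility with Pietsch's definition asserted in Remark~\ref{KleineckeDefnIness}; so I would cite that remark rather than reprove it.
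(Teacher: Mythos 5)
Your proposal is correct and follows essentially the same route as the paper's own proof: apply Theorem~\ref{dichotomythm}, note that a non-fixed maximal left ideal is closed and hence contains $\overline{\mathscr{F}(E)}$, pass to the Calkin algebra, and use that its Jacobson radical (which lies in every maximal left ideal) is the image of $\mathscr{E}(E)$ by Remark~\ref{KleineckeDefnIness}. The only difference is that you spell out the standard closedness argument and the mutual exclusivity, which the paper leaves implicit.
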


\begin{proof}
  Let~$\mathscr{L}$ be a non-fixed, maximal left ideal
  of~$\mathscr{B}(E)$. Then~$\mathscr{L}$ is closed and
  contains~$\mathscr{F}(E)$ by Theorem~\ref{dichotomythm}, so that
  $\overline{\mathscr{F}(E)}\subseteq\mathscr{L}$, and thus
  $\pi(\mathscr{L})$ is a maximal left ideal of the Calkin algebra
  $\mathscr{B}(E)\big/\,\overline{\mathscr{F}(E)}$, where
  \[ \pi\colon\mathscr{B}(E)\to
  \mathscr{B}(E)\big/\,\overline{\mathscr{F}(E)} \] de\-notes the
  quotient homomorphism. In particular $\pi(\mathscr{L})$ contains the
  Jacobson rad\-i\-cal
  of~$\mathscr{B}(E)\big/\,\overline{\mathscr{F}(E)}$, which implies
  that~$\mathscr{L}$ contains~$\mathscr{E}(E)$ by
  Remark~\ref{KleineckeDefnIness}, and therefore~\eqref{June12dich1ii}
  holds.
\end{proof}

\begin{remark}\label{inessin2sidedmaxideals}
  Corollary~\ref{June12dich1} can be seen as a counter-part for
  maximal left ideals of \cite[Proposition~6.6]{lau1}, which states
  that each maximal two-sided ideal of~$\mathscr{B}(E)$
  contains~$\mathscr{E}(E)$ for each
  infinite-di\-men\-sional Banach space~$E$.
\end{remark}

\begin{remark}
  Let~$\mathscr{A}$ be a unital $C^*$-algebra. We write $a\mapsto
  a^\star$ for the involution on~$\mathscr{A}$. (This should not be
  confused with the notation~$T^*$ for the adjoint of an operator $T$
  between Banach spaces used elsewhere in this paper.)  A \emph{state}
  on~$\mathscr{A}$ is a norm-one functional~$\lambda$ on~$\mathscr{A}$
  which is positive, in the sense that $\langle a^\star
  a,\lambda\rangle\ge 0$ for each $a\in\mathscr{A}$.  Given a state
  $\lambda$ on~$\mathscr{A}$, the set
  \begin{equation*}
    \mathscr{N}_\lambda = \bigl\{ a\in\mathscr{A} : \langle a^\star
    a,\lambda\rangle = 0\bigr\} \end{equation*} is a closed left ideal
  of~$\mathscr{A}$ by the Cauchy--Schwarz inequality (\emph{e.g.}, see
  \cite[Propo\-si\-tion~4.5.1]{kr1} or \cite[p.~93]{murphy}). The
  collection of all states on~$\mathscr{A}$ forms a weak$^*$-compact,
  convex subset of the dual space of~$\mathscr{A}$, called the
  \emph{state space} of~$\mathscr{A}$. Its extreme points are the
  \emph{pure states}
  on~$\mathscr{A}$. Prosser~\cite[Theorem~6.2]{prosser} has shown that
  the map $\lambda\mapsto\mathscr{N}_\lambda$ gives a bijective
  correspondence between the pure states on~$\mathscr{A}$ and the
  maximal left ideals of~$\mathscr{A}$; expositions of this result can
  be found in \cite[Theorem~10.2.10]{kr2} and
  \cite[Theorem~5.3.5]{murphy}.

  In the case where $\mathscr{A} = \mathscr{B}(H)$ for some Hilbert
  space~$H$, the fixed maximal left ideals correspond to the vector
  states, which are defined as follows.  Let $x\in H$ be a unit
  vector. Then the functional~$\omega_x$ given by
  \[ \langle T,\omega_x\rangle = (Tx\,|\,x)\quad
  (T\in\mathscr{B}(H)), \] where $(\,\cdot\, |\,\cdot\,)$ denotes the
  inner product on~$H$, is a pure state on~$\mathscr{B}(H)$, called
  the \emph{vector state} induced by~$x$; and we have
  $\mathscr{M}\!\mathscr{L}_x = \mathscr{N}_{\omega_x}$, as is easy to
  check. The conclusion of Corollary~\ref{June12dich1} is known in
  this case because $\mathscr{K}(H) = \mathscr{E}(H)$, and by
  \cite[Corollary~10.4.4]{kr2} each pure state~$\lambda$
  on~$\mathscr{B}(H)$ is either a vector state, or
  $\mathscr{K}(H)\subseteq\ker\lambda$, in which case
  $\mathscr{K}(H)\subseteq\mathscr{N}_\lambda$.

  Finally, suppose that the Hilbert space~$H$ is separable and
  infinite-dimensional. Then clearly $\mathscr{B}(H)$
  has~$\mathfrak{c}$ vector states, whereas it has
  $2^{\,\mathfrak{c}}$ pure states by
  \cite[Propo\-si\-tion~10.4.15]{kr2}. These conclusions also follow
  from Proposition~\ref{trivialfgmaxrightideals} and
  Example~\ref{examplesofnumberofmaxleftideals}%
  \romanref{examplesofnumberofmaxleftideals1}, respectively.
\end{remark}

We shall now turn our attention to the proof of Theorem~\ref{dich2}.
This requires some prepara\-tion.  Let~$E$ be a Banach space. For each
non-empty, bounded subset~$\Gamma$ of~$\mathscr{B}(E)$, we can define
an operator~$\Omega_\Gamma$ from the Banach space
\[ \ell_1(\Gamma,E^*) = \biggl\{ g\colon\Gamma\to E^* :
\sum_{T\in\Gamma} \| g(T)\|<\infty\biggr\} \] into~$E^*$ by
$\Omega_\Gamma g = \sum_{T\in\Gamma}T^*g(T)$ for each
$g\in\ell_1(\Gamma,E^*)$; that is,
\begin{equation}\label{defnOmegaGamma} 
  \langle x,\Omega_\Gamma g\rangle = 
  \sum_{T\in\Gamma}\bigl\langle Tx,g(T)\bigr\rangle\quad 
  (x\in E,\, g\in \ell_1(\Gamma,E^*)). 
\end{equation} 
The following lemma lists some basic properties of this operator, as
well as of the operator~$\Psi_\Gamma$ given
by~\eqref{defnPsiInfinite}, and explains their rele\-vance for our
present purpose. To state it, we require the notion of the
\emph{pre-annihila\-tor}~$\mbox{}^\perp M$ of a subset~$M$ of a dual
Banach space~$E^*$:
\[ \mbox{}^\perp M = \bigl\{ x\in E : \langle x, \lambda\rangle = 0\
   (\lambda\in M)\bigr\}. \]

\begin{lemma}\label{PsiInjLemma}
  Let $E$ be a non-zero Banach space, and let $\Gamma$ be a non-empty,
  bounded subset of~$\mathscr{B}(E)$. Then:
  \begin{romanenumerate}
  \item\label{PsiInjLemma2} $\ker\Psi_{\Gamma} =
    \mbox{}^\perp\Omega_\Gamma(\ell_1(\Gamma,E^*))$, and a
    non-zero element~$x$ of~$E$ belongs to this set if and only if
    $\mathscr{L}_{\Gamma}\subseteq\mathscr{M}\!\mathscr{L}_x;$
  \item\label{PsiInjLemma3} the following three conditions are
    equivalent:
    \begin{alphenumerate}
    \item\label{PsiInjLemma3a} no fixed maximal left ideal
      of~$\mathscr{B}(E)$ contains $\mathscr{L}_\Gamma;$
    \item\label{PsiInjLemma3c} the operator $\Psi_\Gamma$ is
      injective;
    \item\label{PsiInjLemma3b} the range of the
      operator~$\Omega_\Gamma$ is weak$^*$-dense in~$E^*$.
    \end{alphenumerate}
  \end{romanenumerate}
  Now suppose  either that the set~$\Gamma$ is finite or that the left ideal
  $\mathscr{L}_\Gamma$ is closed. Then:
  \begin{romanenumerate}\setcounter{smallromans}{2}
  \item\label{onedimleftideal2} for each $\lambda\in E^*$, the set
    $\mathscr{J}_\lambda = \{ y\otimes\lambda : y\in E\}$ is a left
    ideal of~$\mathscr{B}(E)$, and the following three conditions are
    equivalent:
    \begin{alphenumerate}
    \item\label{onedimleftideal2a}
      $\mathscr{J}_\lambda\subseteq\mathscr{L}_{\Gamma};$
    \item\label{onedimleftideal2b}
      $y\otimes\lambda\in\mathscr{L}_{\Gamma}$ for some $y\in
      E\setminus\{0\};$
    \item\label{onedimleftideal2c}
      $\lambda\in\Omega_{\Gamma}(\ell_1(\Gamma,E^*));$
    \end{alphenumerate}
  \item\label{onedimleftideal4} the operator $\Omega_\Gamma$ is
    surjective if and only if $\mathscr{L}_\Gamma$
    contains~$\mathscr{F}(E)$.
  \end{romanenumerate}
\end{lemma}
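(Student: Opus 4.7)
The plan is to treat the four parts in order, exploiting throughout the explicit description of~$\mathscr{L}_\Gamma$ given in~\eqref{eqLeftidealgenbyfam}--\eqref{eqFGLeftideal04072012} together with the composition rule~\eqref{eqCompRankOneOp} for rank-one operators.

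For part~\romanref{PsiInjLemma2}, I would observe that $\Psi_\Gamma x = 0$ is synonymous with $Tx = 0$ for every $T\in\Gamma$. Since applying~$\Omega_\Gamma$ to a function supported on a single element $T\in\Gamma$ returns $T^*\mu$ for an arbitrary $\mu\in E^*$, the range of~$\Omega_\Gamma$ is the linear span of $\{T^*\mu : T\in\Gamma,\,\mu\in E^*\}$; its pre-annihilator therefore consists precisely of those $x\in E$ with $\langle Tx,\mu\rangle = 0$ for all $T$ and $\mu$, that is, of $\ker\Psi_\Gamma$. The second clause of~\romanref{PsiInjLemma2} is then immediate, since $\mathscr{L}_\Gamma\subseteq\mathscr{M}\!\mathscr{L}_x$ is equivalent to $\Gamma\subseteq\mathscr{M}\!\mathscr{L}_x$, which in turn reads as $Tx=0$ for each $T\in\Gamma$.

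For part~\romanref{PsiInjLemma3}, the equivalence \alphref{PsiInjLemma3a}$\Leftrightarrow$\alphref{PsiInjLemma3c} is a direct restatement of~\romanref{PsiInjLemma2}, while \alphref{PsiInjLemma3c}$\Leftrightarrow$\alphref{PsiInjLemma3b} follows from the standard Hahn--Banach principle that a subspace of a dual Banach space is weak$^*$-dense if and only if its pre-annihilator is trivial, which one applies to $\Omega_\Gamma(\ell_1(\Gamma,E^*))$ using the identification of its pre-annihilator with $\ker\Psi_\Gamma$ from~\romanref{PsiInjLemma2}.

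For part~\romanref{onedimleftideal2}, the fact that $\mathscr{J}_\lambda$ is a left ideal is immediate from~\eqref{eqCompRankOneOp}. The implication \alphref{onedimleftideal2a}$\Rightarrow$\alphref{onedimleftideal2b} is trivial; for \alphref{onedimleftideal2b}$\Rightarrow$\alphref{onedimleftideal2c}, writing $y\otimes\lambda = \sum_{j=1}^n S_jT_j$ with $T_j\in\Gamma$ and choosing $\mu\in E^*$ with $\langle y,\mu\rangle = 1$, I would pair both sides with $\mu$ to obtain $\lambda = \sum_j T_j^*(S_j^*\mu)\in\Omega_\Gamma(\ell_1(\Gamma,E^*))$ (summing coefficients if any $T_j$'s coincide). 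For \alphref{onedimleftideal2c}$\Rightarrow$\alphref{onedimleftideal2a}, if $\lambda = \sum_{T\in\Gamma}T^*g(T)$ then~\eqref{eqCompRankOneOp} yields $y\otimes\lambda = \sum_{T\in\Gamma}(y\otimes g(T))T$, with absolute convergence ensured by the norm-boundedness of~$\Gamma$ and the summability of $\|g(T)\|$; this sum is itself a finite element of~$\mathscr{L}_\Gamma$ when~$\Gamma$ is finite, and a norm-limit of finite partial sums from~$\mathscr{L}_\Gamma$ when the latter is assumed closed. Part~\romanref{onedimleftideal4} is then a direct corollary: $\Omega_\Gamma$ is surjective iff \alphref{onedimleftideal2c} holds for every $\lambda\in E^*$, iff every rank-one operator belongs to~$\mathscr{L}_\Gamma$ (by~\alphref{onedimleftideal2a}), iff $\mathscr{F}(E)\subseteq\mathscr{L}_\Gamma$, since $\mathscr{F}(E)$ is spanned by the rank-one operators.

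The only real obstacle is the closure-versus-finiteness dichotomy in parts~\romanref{onedimleftideal2}--\romanref{onedimleftideal4}: one has to arrange that the infinite series representation of~$y\otimes\lambda$ actually lies in the algebraically defined left ideal~$\mathscr{L}_\Gamma$ rather than only in its closure, and the two alternative hypotheses (finiteness of~$\Gamma$ or closedness of~$\mathscr{L}_\Gamma$) are precisely what is needed to effect this passage.
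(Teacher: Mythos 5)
Your proof is correct and follows essentially the same route as the paper's: singleton-supported test functions for part (i), the annihilator/weak$^*$-density duality for part (ii), the adjoint-pairing trick for (b)$\Rightarrow$(c) and the series $y\otimes\lambda=\sum_{T\in\Gamma}(y\otimes g(T))T$ for (c)$\Rightarrow$(a) in part (iii), and spanning $\mathscr{F}(E)$ by rank-one operators for part (iv). The only cosmetic slip is your claim in part (i) that the range of $\Omega_\Gamma$ \emph{equals} $\operatorname{span}\{T^*\mu : T\in\Gamma,\ \mu\in E^*\}$ --- for infinite $\Gamma$ the range merely contains this span and is contained in its norm closure --- but since all three sets have the same pre-annihilator, the conclusion is unaffected.
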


\begin{proof}   
  \romanref{PsiInjLemma2}. Suppose that $x\in\ker\Psi_\Gamma$. Then
  $Tx = 0$ for each $T\in\Gamma$, so that \eqref{defnOmegaGamma}
  implies that
  $x\in\mbox{}^\perp\Omega_\Gamma(\ell_1(\Gamma,E^*))$.

  Conversely, suppose that
  $x\in\mbox{}^\perp\Omega_\Gamma(\ell_1(\Gamma,E^*))$, and let
  $T\in\Gamma$ and $\lambda\in E^*$ be given. Defining
  $g\in\ell_1(\Gamma,E^*)$ by $g(T) = \lambda$ and $g(S) = 0$ for
  $S\in\Gamma\setminus\{T\}$, we have \mbox{$0 = \langle x, \Omega_\Gamma
  g\rangle = \langle Tx, \lambda\rangle$} by~\eqref{defnOmegaGamma}.
  Since $\lambda\in E^*$ was arbitrary, this shows that $Tx = 0$, and
  hence $x\in\ker\Psi_\Gamma$.

  To prove the second clause, we observe that for each $x\in
  E\setminus\{0\}$, we have $x\in\ker\Psi_{\Gamma}$ if and only if $
  \Gamma\subseteq\mathscr{M}\!\mathscr{L}_x$, and hence if and only if
  $ \mathscr{L}_{\Gamma}\subseteq \mathscr{M}\!\mathscr{L}_x$.

  \romanref{PsiInjLemma3}. The equivalence of~\alphref{PsiInjLemma3a}
  and~\alphref{PsiInjLemma3c} is immediate from (i).

  To see that~\alphref{PsiInjLemma3c} and~\alphref{PsiInjLemma3b} are
  equivalent, we observe that, by~\romanref{PsiInjLemma2} and
  \cite[Proposition~2.6.6(c)]{meg}, the weak$^*$ closure of the range
  of~$\Omega_\Gamma$ is equal to the annihilator
  \[ (\ker\Psi_\Gamma)^\perp = \bigl\{\lambda\in E^* : \langle
  x,\lambda\rangle = 0\ (x\in\ker\Psi_\Gamma)\bigr\} \]
  of~$\ker\Psi_\Gamma$. Hence~\alphref{PsiInjLemma3c}
  implies~\alphref{PsiInjLemma3b}. Conversely, suppose that
  $(\ker\Psi_\Gamma)^\perp = E^*$. Then
  \cite[Proposition~1.10.15(c)]{meg} implies that $\ker\Psi_\Gamma =
  \mbox{}^\perp(E^*) = \{0\}$.
   
  \romanref{onedimleftideal2}.  Equation~\eqref{eqCompRankOneOp} shows
  that $\mathscr{J}_\lambda$ is a left ideal.

  The implication
  \alphref{onedimleftideal2a}$\Rightarrow$\alphref{onedimleftideal2b}
  is evident.

  \alphref{onedimleftideal2b}$\Rightarrow$\alphref{onedimleftideal2c}.
  Suppose that $y\otimes\lambda\in\mathscr{L}_{\Gamma}$ for some $y\in
  E\setminus\{0\}$. Then there are $n\in\N$,
  $S_1,\ldots,S_n\in\mathscr{B}(E)$, and $T_1,\ldots,T_n\in\Gamma$
  such that $y\otimes\lambda = \sum_{j=1}^n S_jT_j$, where we may
  suppose that $T_1,\ldots,T_n$ are distinct. Choose $\mu\in E^*$ such
  that $\langle y,\mu\rangle =1$, and define $g\colon\Gamma\to E^*$ by
  \[ g(T) = \begin{cases} S^*_j\mu &\text{if}\ T=T_j\ \text{for some}\
    j\in\{1,\ldots,n\},\\ 0 &\text{otherwise.} \end{cases} \] Then $g$
  has finite support, so that trivially it belongs
  to~$\ell_1(\Gamma,E^*)$, and $\Omega_\Gamma g = \lambda$
  because~\eqref{defnOmegaGamma} implies that
  \[ \langle x,\Omega_\Gamma g\rangle = \sum_{j=1}^n\langle T_jx,
  S_j^*\mu\rangle = \biggl\langle \sum_{j=1}^n S_jT_j
  x,\mu\biggr\rangle = \bigl\langle (y\otimes\lambda)x,\mu\bigr\rangle
  = \langle x,\lambda\rangle \] for each $x\in E$. Hence
  $\lambda\in\Omega_{\Gamma}(\ell_1(\Gamma,E^*))$.

  \alphref{onedimleftideal2c}$\Rightarrow$\alphref{onedimleftideal2a}.
  Suppose that $\lambda = \Omega_{\Gamma}g$ for some
  $g\in\ell_1(\Gamma,E^*)$. Then, for each $y\in E$, we have
  \[ y\otimes \lambda = \sum_{T\in\Gamma}y\otimes T^*g(T) =
  \sum_{T\in\Gamma}(y\otimes g(T))T, \] which belongs
  to~$\mathscr{L}_\Gamma$ because each term of the sum on the
  right-hand side does, and either this sum is finite
  or~$\mathscr{L}_\Gamma$ is closed by the assumption.

  \romanref{onedimleftideal4}.  Suppose that $\Omega_{\Gamma}$ is
  surjective. Then~\romanref{onedimleftideal2} implies that
  $\mathscr{J}_\lambda\subseteq\mathscr{L}_{\Gamma}$ for each
  $\lambda\in E^*$, and consequently $\mathscr{F}(E) =
  \operatorname{span}\bigcup_{\lambda\in
    E^*}\mathscr{J}_\lambda\subseteq\mathscr{L}_{\Gamma}$.
  
  Conversely, suppose that
  $\mathscr{F}(E)\subseteq\mathscr{L}_{\Gamma}$, and let $\lambda\in
  E^*$ be given. Since
  \mbox{$\mathscr{J}_\lambda\subseteq\mathscr{F}(E)$},
  \romanref{onedimleftideal2}~implies that
  $\lambda\in\Omega_{\Gamma}(\ell_1(\Gamma,E^*))$, so that the map
  $\Omega_{\Gamma}$ is surjective.
\end{proof}

\begin{corollary}\label{Omegaclosedrange}
  Let $E$ be a Banach space, and let $\Gamma$ be a non-empty, bounded
  subset of~$\mathscr{B}(E)$ such that the left
  ideal~$\mathscr{L}_\Gamma$ is closed. Then the
  operator~$\Omega_{\Gamma}$ has closed range.
\end{corollary}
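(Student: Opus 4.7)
The plan is to leverage part~(iii) of Lemma~\ref{PsiInjLemma}, which is applicable here because $\mathscr{L}_\Gamma$ is closed. That lemma identifies the range of $\Omega_\Gamma$ with those functionals $\lambda\in E^*$ for which the rank-one operator $y\otimes\lambda$ lies in $\mathscr{L}_\Gamma$ for some non-zero $y\in E$, and via the stronger implication (c)$\Rightarrow$(a) it even gives this membership for \emph{every} $y\in E$. I will use this translation to transfer the closedness of $\mathscr{L}_\Gamma$ inside $\mathscr{B}(E)$ to closedness of the range of $\Omega_\Gamma$ inside $E^*$.

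Concretely, the degenerate case $E=\{0\}$ is trivial, so I would assume $E\neq\{0\}$ and fix once and for all a vector $y\in E\setminus\{0\}$. Given a sequence $(\lambda_n)$ in $\Omega_\Gamma(\ell_1(\Gamma,E^*))$ converging in norm to some $\lambda\in E^*$, the implication (c)$\Rightarrow$(a) of Lemma~\ref{PsiInjLemma}(iii) applied for each $n$ yields $y\otimes\lambda_n\in\mathscr{L}_\Gamma$. The norm continuity of the map $\mu\mapsto y\otimes\mu$ from $E^*$ into $\mathscr{B}(E)$, reflected in the identity $\|y\otimes\lambda_n-y\otimes\lambda\|=\|y\|\cdot\|\lambda_n-\lambda\|$, then gives $y\otimes\lambda_n\to y\otimes\lambda$ in operator norm, so that $y\otimes\lambda\in\mathscr{L}_\Gamma$ because $\mathscr{L}_\Gamma$ is closed. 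Applying the reverse implication (b)$\Rightarrow$(c) of Lemma~\ref{PsiInjLemma}(iii) now recovers $\lambda\in\Omega_\Gamma(\ell_1(\Gamma,E^*))$, and the range is therefore closed.

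There is no substantive obstacle here; the real content has been packaged inside Lemma~\ref{PsiInjLemma}. The only point worth flagging is that condition~(b) calls for a non-zero witness $y$, so one should fix such a $y$ at the outset rather than attempt to collate witnesses associated to the various $\lambda_n$ at the limit step. Using the uniform direction (c)$\Rightarrow$(a) to produce $y\otimes\lambda_n\in\mathscr{L}_\Gamma$ for the prescribed $y$, rather than merely for some $y_n$ possibly depending on $n$, sidesteps this issue cleanly.
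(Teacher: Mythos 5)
Your proposal is correct and follows essentially the same route as the paper: both apply the implication (c)$\Rightarrow$(a) of Lemma~\ref{PsiInjLemma}\romanref{onedimleftideal2} (valid here since $\mathscr{L}_\Gamma$ is closed) to place $y\otimes\lambda_n$ in $\mathscr{L}_\Gamma$, pass to the limit using closedness of $\mathscr{L}_\Gamma$, and then invoke (b)$\Rightarrow$(c) to recover $\lambda$ in the range of $\Omega_\Gamma$. Your remark about fixing a single non-zero witness $y$ in advance is a sensible precaution, though the uniform conclusion $\mathscr{J}_{\lambda_n}\subseteq\mathscr{L}_\Gamma$ makes it automatic.
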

\begin{proof}
  We may suppose that $E$ is non-zero.  Consider a sequence
  $(\lambda_j)_{j\in\N}$ in~$\Omega_{\Gamma}(\ell_1(\Gamma,E^*))$ that
  converges to some element $\lambda\in E^*$.
  Lemma~\ref{PsiInjLemma}\romanref{onedimleftideal2} shows that
  $\mathscr{J}_{\lambda_j}\subseteq\mathscr{L}_{\Gamma}$ for each
  $j\in\N$, and hence
  \[ y\otimes\lambda = \lim_{j\to\infty}
  y\otimes\lambda_j\in\mathscr{L}_{\Gamma}\quad (y\in E) \] because
  $\mathscr{L}_{\Gamma}$ is closed, so that
  $\lambda\in\Omega_{\Gamma}(\ell_1(\Gamma,E^*))$ by another
  application of Lemma~\ref{PsiInjLemma}\romanref{onedimleftideal2}.
  Thus  $\Omega_{\Gamma}$ has closed range.
\end{proof}

We can now characterize the closed left ideals of~$\mathscr{B}(E)$
that contain~$\mathscr{F}(E)$ as follows, provided either  that $E$ is
reflexive or that we restrict our attention to the closed left ideals that
are finitely generated. Note that Theorem~\ref{dich2} is simply a
restatement of the equivalence of
conditions~\alphref{charfixedideals1}
and~\alphref{charfixedideals2.5}.

\begin{theorem}\label{charfixedideals}
  Let~$E$ be a non-zero Banach space, let~$\mathscr{L}$ be a closed
  left ideal of~$\mathscr{B}(E)$, and take a non-empty, bounded
  subset~$\Gamma$ of~$\mathscr{B}(E)$ such that $\mathscr{L} =
  \mathscr{L}_\Gamma$.  Suppose either that $E$ is reflexive
  or that~$\Gamma$ is finite. Then the following six conditions are
  equivalent:
  \begin{alphenumerate}
  \item\label{charfixedideals1} no fixed maximal left ideal
    of~$\mathscr{B}(E)$ contains $\mathscr{L};$
  \item\label{charfixedideals3} the operator $\Psi_{\Gamma}$ is
    injective;
  \item\label{charfixedideals4} the operator $\Psi_{\Gamma}$ is
    bounded below;
  \item\label{charfixedideals5} the range of the
    operator~$\Omega_\Gamma$ is weak$^*$-dense in~$E^*;$
  \item\label{charfixedideals2} the operator $\Omega_\Gamma$ is
    surjective; 
  \item\label{charfixedideals2.5} $\mathscr{L}$ contains
    $\mathscr{F}(E)$.
  \end{alphenumerate}
\end{theorem}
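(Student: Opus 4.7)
The plan is to exploit Lemma~\ref{PsiInjLemma} together with the duality~\eqref{dualitySurBB}. Part~(ii) of the lemma already supplies (a)~$\Leftrightarrow$~(b)~$\Leftrightarrow$~(d) and part~(iv) supplies (e)~$\Leftrightarrow$~(f); the implications (c)~$\Rightarrow$~(b) and (e)~$\Rightarrow$~(d) are immediate from the definitions. So the whole task reduces to establishing (c)~$\Leftrightarrow$~(e) together with one further implication, say (d)~$\Rightarrow$~(e), to close the cycle.

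For the equivalence (c)~$\Leftrightarrow$~(e) I would observe that $\Omega_\Gamma$ is, in a precise sense, the adjoint of~$\Psi_\Gamma$. When $\Gamma$ is finite, the canonical isometric identification $\ell_\infty(\Gamma,E)^* = (E^*)^n = \ell_1(\Gamma,E^*)$ makes $\Omega_\Gamma$ literally equal to $\Psi_\Gamma^*$, and (c)~$\Leftrightarrow$~(e) follows at once from~\eqref{dualitySurBB}. When $E$ is reflexive, I would instead compute the adjoint of $\Omega_\Gamma$: using the identification $\ell_1(\Gamma,E^*)^* = \ell_\infty(\Gamma,E^{**})$ and the biadjoint relation $T^{**}\kappa_E = \kappa_E T$, one finds that $\Omega_\Gamma^*\kappa_E x = (\kappa_E Tx)_{T\in\Gamma}$, which under the identification $E = E^{**}$ is precisely $\Psi_\Gamma x$. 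Hence $\Omega_\Gamma^* = \Psi_\Gamma$, and another application of~\eqref{dualitySurBB} again yields (c)~$\Leftrightarrow$~(e).

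To close the loop I would prove (d)~$\Rightarrow$~(e). Corollary~\ref{Omegaclosedrange} provides norm-closedness of the range of $\Omega_\Gamma$, so the goal is to upgrade this to weak$^*$-closedness: combined with the weak$^*$-denseness in~(d), this forces the range to be all of $E^*$. In the finite $\Gamma$ case the range of $\Omega_\Gamma = \Psi_\Gamma^*$ equals $(\ker\Psi_\Gamma)^\perp$ by the closed range theorem, and annihilators are always weak$^*$-closed. In the reflexive case, the weak$^*$ and weak topologies on $E^*$ coincide, and Mazur's theorem makes any norm-closed subspace weakly closed, hence weak$^*$-closed.

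The step I expect to require most care is the reflexive half of (c)~$\Leftrightarrow$~(e): verifying that $\Omega_\Gamma^*$, \emph{a priori} a map into $\ell_\infty(\Gamma,E^{**})$, recovers $\Psi_\Gamma$ after the reflexivity identification. Everything else amounts to bookkeeping around the two lemmas cited and the standard closed-range/duality machinery.
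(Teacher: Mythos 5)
Your proposal is correct and follows essentially the same route as the paper's proof: the bulk of the equivalences come from Lemma~\ref{PsiInjLemma}, the link between \alphref{charfixedideals4} and \alphref{charfixedideals2} comes from viewing $\Omega_\Gamma$ and $\Psi_\Gamma$ as adjoints of one another via~\eqref{dualitySurBB}, and the cycle is closed using Corollary~\ref{Omegaclosedrange} together with the closed range theorem (finite case) or Mazur's theorem (reflexive case). The only cosmetic differences are that the paper factorizes $\Omega_\Gamma = \Psi_\Gamma^*\Xi_\Gamma$ rather than computing $\Omega_\Gamma^*$ on $\kappa_E(E)$, and closes the finite case with \alphref{charfixedideals3}$\Rightarrow$\alphref{charfixedideals4} instead of your \alphref{charfixedideals5}$\Rightarrow$\alphref{charfixedideals2}; both variants rest on the same ingredients.
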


\begin{proof}
  The following implications hold without supposing that~$E$ is
  reflexive or~$\Gamma$ is finite: Conditions
  \alphref{charfixedideals1}, \alphref{charfixedideals3},
  and~\alphref{charfixedideals5} are mutually equivalent by
  Lemma~\ref{PsiInjLemma}\romanref{PsiInjLemma3}, while
  conditions~\alphref{charfixedideals2}
  and~\alphref{charfixedideals2.5} are equivalent by
  Lemma~\ref{PsiInjLemma}\romanref{onedimleftideal4}.  Evidently
  \alphref{charfixedideals4} implies~\alphref{charfixedideals3}, and
  \alphref{charfixedideals2} im\-plies~\alphref{charfixedideals5}. In
  fact, \alphref{charfixedideals2} implies \alphref{charfixedideals4},
  as we shall now show. Suppose that~$\Omega_\Gamma$ is surjective. We
  can define a linear isometry
  \mbox{$\Xi_\Gamma\colon\ell_1(\Gamma,E^*)\to\ell_\infty(\Gamma,E)^*$}
  by
  \begin{equation}\label{defnUpsilonoperator}
  \langle f,\Xi_\Gamma g\rangle = \sum_{T\in\Gamma} \bigl\langle f(T),
  g(T)\bigr\rangle\quad (f\in \ell_\infty(\Gamma,E),\,
  g\in\ell_1(\Gamma,E^*)), \end{equation} and $\Omega_\Gamma =
  \Psi_\Gamma^*\Xi_\Gamma$. Hence the surjectivity of~$\Omega_\Gamma$
  implies that~$\Psi_\Gamma^*$ is surjective, and
  therefore~$\Psi_\Gamma$ is bounded below by~\eqref{dualitySurBB}.

  The remaining implications do require further assumptions.  We
  consider first the case where $E$ is reflexive. Then the weak and
  weak$^*$ topologies on~$E^*$ coincide, so that the range
  of~$\Omega_\Gamma$ is weak$^*$-dense if and only if it is weakly
  dense, if and only if it is norm-dense by Mazur's theorem
  (\emph{e.g.}, see \cite[Theorem~2.5.16]{meg}). However,
  $\Omega_\Gamma$ has closed range by
  Cor\-ol\-lary~\ref{Omegaclosedrange}, and
  consequently~\alphref{charfixedideals5}
  implies~\alphref{charfixedideals2}, which completes the proof in
  this case.

  Secondly, suppose that the set $\Gamma$ is finite. Then the isometry
  $$\Xi_\Gamma\colon \ell_1(\Gamma,E^*)\to\ell_\infty(\Gamma,E)^*$$
  defined by~\eqref{defnUpsilonoperator}, above, is an isomorphism, so
  that~$\Omega_\Gamma$ and~$\Psi_\Gamma^*$ are equal up to an
  isometric identification. Hence~\alphref{charfixedideals4}
  and~\alphref{charfixedideals2} are equivalent
  by~\eqref{dualitySurBB}. Moreover,
  Cor\-ol\-lary~\ref{Omegaclosedrange} shows that $\Psi_\Gamma^*$ has
  closed range, and therefore~$\Psi_{\Gamma}$ has closed range by the
  closed range theorem (\emph{e.g.}, see
  \cite[Theorem~3.1.21]{meg}). Thus~\alphref{charfixedideals3}
  implies~\alphref{charfixedideals4}, and the proof is complete.
\end{proof}
For later reference, we note that the arguments which establish the
equivalence of conditions~\alphref{charfixedideals4},
\alphref{charfixedideals2}, and~\alphref{charfixedideals2.5} for
finite~$\Gamma$ given in the first and last paragraph of the proof of
Theorem~\ref{charfixedideals}, above, remain true in the case where
the left ideal $\mathscr{L} = \mathscr{L}_\Gamma$ is not necessarily
closed. Hence we have the following conclusion.

\begin{corollary}\label{Sept2011keylemma}
  Let~$E$ be a Banach space, and let~$\Gamma$ be a non-empty, finite
  subset of~$\mathscr{B}(E)$. Then the following three conditions are
  equivalent:
  \begin{alphenumerate}
  \item the operator $\Psi_{\Gamma}$ is bounded below;
  \item the operator $\Omega_\Gamma$ is surjective;
  \item $\mathscr{L}_\Gamma$ contains $\mathscr{F}(E)$.
  \end{alphenumerate}
\end{corollary}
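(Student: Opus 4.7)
The plan is to extract the corollary from the machinery already built for Theorem~\ref{charfixedideals}, observing that for finite~$\Gamma$ the implications between~(a), (b), and~(c) do not require $\mathscr{L}_\Gamma$ to be closed.

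First I would dispose of the equivalence of~(b) and~(c) by a direct citation of Lemma~\ref{PsiInjLemma}\romanref{onedimleftideal4}. That clause is stated under the disjunctive hypothesis that~$\Gamma$ is finite or $\mathscr{L}_\Gamma$ is closed, and the first alternative is exactly the hypothesis of the corollary, so no extra work is needed here.

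For the equivalence of~(a) and~(b), I would reuse the computation from the first paragraph of the proof of Theorem~\ref{charfixedideals}. Since~$\Gamma$ is finite, the canonical map $\Xi_\Gamma\colon \ell_1(\Gamma,E^*)\to\ell_\infty(\Gamma,E)^*$ defined by~\eqref{defnUpsilonoperator} is not just an isometry but an isomorphism, because $\ell_\infty(\Gamma,E)$ is just the finite product~$E^n$ and $\ell_1(\Gamma,E^*)$ is its dual via the natural pairing. The identity $\Omega_\Gamma = \Psi_\Gamma^*\Xi_\Gamma$ then shows that $\Omega_\Gamma$ is surjective if and only if $\Psi_\Gamma^*$ is surjective. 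Applying the duality~\eqref{dualitySurBB} between surjectivity and boundedness below, this is in turn equivalent to $\Psi_\Gamma$ being bounded below.

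There is no real obstacle: the corollary is essentially a repackaging of what was already proved. The one point to verify carefully is that the arguments from Theorem~\ref{charfixedideals} invoked here, namely Lemma~\ref{PsiInjLemma}\romanref{onedimleftideal4} and the factorization $\Omega_\Gamma = \Psi_\Gamma^*\Xi_\Gamma$, never appeal to the closedness of $\mathscr{L}_\Gamma$; both rely only on the finiteness of~$\Gamma$ and on the basic duality~\eqref{dualitySurBB}, so the conclusion transfers to arbitrary (not necessarily closed) left ideals generated by a finite set.
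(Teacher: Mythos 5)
Your proposal is correct and follows exactly the route the paper takes: the paper derives this corollary by noting that the arguments for the equivalence of conditions (c), (e), and (f) of Theorem~\ref{charfixedideals} in the finite-$\Gamma$ case --- namely Lemma~\ref{PsiInjLemma}\romanref{onedimleftideal4}, the factorization $\Omega_\Gamma = \Psi_\Gamma^*\Xi_\Gamma$ with $\Xi_\Gamma$ an isomorphism, and the duality~\eqref{dualitySurBB} --- never use closedness of $\mathscr{L}_\Gamma$. Your closing check is precisely the right one: the only place closedness enters the proof of Theorem~\ref{charfixedideals} is via Corollary~\ref{Omegaclosedrange}, which is needed only for the implication from injectivity to boundedness below, and that implication is not part of this corollary.
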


As an easy consequence of these results, we obtain that the ideal of
weakly compact operators is finitely generated as a left ideal only in
the trivial case where it is not proper.
\begin{corollary}\label{propWCopfg}
  The following three conditions are equivalent for a Banach
  space~$E\colon$
  \begin{alphenumerate}
  \item\label{propWCopfg1} $\mathscr{W}(E)$ is finitely generated as a
    left ideal;
  \item\label{propWCopfg3} $\mathscr{W}(E) = \mathscr{B}(E);$
   \item\label{propWCopfg2} the Banach space $E$ is reflexive.
   \end{alphenumerate}
\end{corollary}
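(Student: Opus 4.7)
The plan is to establish the chain \alphref{propWCopfg2}$\Rightarrow$\alphref{propWCopfg3}$\Rightarrow$\alphref{propWCopfg1}$\Rightarrow$\alphref{propWCopfg2}. The first two implications are essentially free: if $E$ is reflexive, then its closed unit ball is weakly compact, so $I_E\in\mathscr{W}(E)$ and hence $\mathscr{W}(E)=\mathscr{B}(E)$; and if $\mathscr{W}(E)=\mathscr{B}(E)$, then it is singly generated by~$I_E$. So all the content is in \alphref{propWCopfg1}$\Rightarrow$\alphref{propWCopfg2}.

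Assume therefore that $\mathscr{W}(E)=\mathscr{L}_\Gamma$ for some non-empty, finite set $\Gamma=\{T_1,\ldots,T_n\}\subseteq\mathscr{W}(E)$. Since $\mathscr{F}(E)\subseteq\mathscr{W}(E)=\mathscr{L}_\Gamma$, Corollary~\ref{Sept2011keylemma} yields that the operator $\Psi_\Gamma=\sum_{j=1}^n \iota_j T_j\in\mathscr{B}(E,E^n)$ is bounded below. On the other hand, each $\iota_j T_j$ is weakly compact because the class~$\mathscr{W}$ is an operator ideal containing each~$T_j$, and a finite sum of weakly compact operators is weakly compact, so $\Psi_\Gamma$ is itself weakly compact. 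Thus we have a single operator $\Psi_\Gamma$ that is simultaneously bounded below and weakly compact, and it remains to deduce from this that~$E$ is reflexive.

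For this final step, note that $\Psi_\Gamma$ restricts to a Banach-space isomorphism onto its range $\Psi_\Gamma(E)$, which is norm-closed in $E^n$ (hence weakly closed by convexity) because $\Psi_\Gamma$ is bounded below. Weak compactness of $\Psi_\Gamma$ gives that $\overline{\Psi_\Gamma(B_E)}^w$ is weakly compact in~$E^n$, and this weak closure is contained in $\Psi_\Gamma(E)$. The inverse map $\Psi_\Gamma^{-1}\colon\Psi_\Gamma(E)\to E$ is bounded, hence weak-to-weak continuous, so $\Psi_\Gamma^{-1}\bigl(\overline{\Psi_\Gamma(B_E)}^w\bigr)$ is a weakly compact subset of~$E$ containing $B_E$. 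Since $B_E$ is closed and convex, hence weakly closed by Mazur's theorem, it is weakly compact, so~$E$ is reflexive, completing the proof.

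The argument is essentially a direct application of Corollary~\ref{Sept2011keylemma} combined with the ideal property of~$\mathscr{W}$; the only mildly delicate point is the last paragraph, where one must be careful that $\Psi_\Gamma(E)$ need not be complemented in~$E^n$, so one cannot simply produce a bounded left inverse on all of~$E^n$ and factor $I_E$ through a weakly compact operator. Working instead with $\Psi_\Gamma^{-1}$ on the range bypasses this issue cleanly.
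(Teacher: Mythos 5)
Your argument is correct, and it follows the paper's proof exactly up to the point where one has a single operator $\Psi_\Gamma$ that is both bounded below and weakly compact: like the paper, you dispose of \alphref{propWCopfg2}$\Rightarrow$\alphref{propWCopfg3}$\Rightarrow$\alphref{propWCopfg1} trivially, and you invoke Corollary~\ref{Sept2011keylemma} together with the ideal property of~$\mathscr{W}$ for the substantive implication. Where you diverge is in the final deduction that such an operator forces~$E$ to be reflexive. The paper appeals to the Davis--Figiel--Johnson--Pe{\l}czy{\a'n}ski factorization theorem: $\Psi_\Gamma = SR$ with $R\in\mathscr{B}(E,F)$ for some reflexive~$F$, and $R$ inherits boundedness below, so $E$ embeds isomorphically into~$F$. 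You instead argue directly that $\overline{\Psi_\Gamma(B_E)}^{\,w}$ is weakly compact and lies in the weakly closed subspace $\Psi_\Gamma(E)$, and pull it back through the bounded (hence weak-to-weak continuous) inverse on the range to exhibit $B_E$ as a weakly closed subset of a weakly compact set. Your route is more elementary --- it replaces a named factorization theorem with a few lines of soft weak-topology reasoning (and the standard fact that the weak topology of a closed subspace is the restriction of the ambient one) --- at the cost of being a one-off argument; the DFJP route packages the same phenomenon as the general principle that weakly compact operators factor through reflexive spaces, which the paper can cite in one line. Your closing remark about $\Psi_\Gamma(E)$ possibly failing to be complemented in~$E^n$ is well taken and is precisely why neither proof attempts to produce a left inverse defined on all of~$E^n$.
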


\begin{proof} The equivalence of~\alphref{propWCopfg3}
  and~\alphref{propWCopfg2} is standard, and \alphref{propWCopfg3}
  obviously implies~\alphref{propWCopfg1}.

  To see that~\alphref{propWCopfg1} implies~\alphref{propWCopfg2},
  suppose that $\mathscr{W}(E) = \mathscr{L}_{\Gamma}$ for some
  non-empty, finite subset $\Gamma = \{T_1,\ldots,T_n\}$
  of~$\mathscr{B}(E)$. It clearly suffices to consider the case where
  $E$ is non-zero.  Theorem~\ref{charfixedideals} (or
  Corollary~\ref{Sept2011keylemma}) implies that the operator
  $\Psi_{\Gamma}$ is bounded below because $\mathscr{W}(E)$
  contains~$\mathscr{F}(E)$. Since the operators $T_1,\ldots,T_n$ are
  weakly compact, the same is true for~$\Psi_{\Gamma}$ by the
  definition~\eqref{defnOpPsi}. Hence the
  Davis--Figiel--Johnson--Pe{\l}czy{\a'n}ski factorization theorem
  (see~\cite{dfjp}, or~\cite[Theorem~2.g.11]{lt2} for an exposition)
  implies that, for some reflexive Banach space~$F$, there are
  operators $R\in\mathscr{B}(E,F)$ and $S\in\mathscr{B}(F,E^n)$ such
  that $\Psi_{\Gamma} = SR$. Now $R$ is bounded below
  because~$\Psi_{\Gamma}$ is, and there\-fore $E$ is isomorphic to the
  subspace $R(E)$ of the reflexive space~$F$, so that $E$ is
  reflexive. \end{proof}

We conclude this section with an example that shows that
Theorem~\ref{charfixedideals} may not be true if we drop the
assumption that either the Banach space~$E$ is reflexive or the
set~$\Gamma$ is finite. This requires the following easy variant of
Lemma~\ref{PsiInjLemma}\romanref{onedimleftideal2}.

\begin{lemma}\label{lemma5712}
  Let $T$ be an operator on a Banach space~$E$, and suppose that
  $y\otimes\lambda\in\overline{\mathscr{L}_{\{T\}}}$ for some $y\in
  E\setminus\{0\}$ and $\lambda\in E^*$. Then
  $\lambda\in\overline{T^*(E^*)}$.
\end{lemma}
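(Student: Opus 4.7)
The plan is to unwind the assumption, take adjoints, and then evaluate at a suitable functional. By the description of $\mathscr{L}_{\{T\}}$ in~\eqref{eqFGLeftideal04072012}, the hypothesis $y\otimes\lambda \in \overline{\mathscr{L}_{\{T\}}}$ means that there is a sequence $(S_n)_{n\in\N}$ in $\mathscr{B}(E)$ such that $S_nT \to y\otimes\lambda$ in the operator norm.

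Taking adjoints is an isometry, so $T^*S_n^* = (S_nT)^* \to (y\otimes\lambda)^*$ in $\mathscr{B}(E^*)$. A direct computation from~\eqref{eqCompRankOneOp} (or from the definition of the adjoint) shows that $(y\otimes\lambda)^*\mu = \langle y,\mu\rangle\lambda$ for each $\mu\in E^*$. Since $y\neq 0$, the Hahn--Banach theorem supplies a functional $\mu\in E^*$ with $\langle y,\mu\rangle = 1$; evaluating the above norm-convergence at this fixed $\mu$ yields
\[ T^*S_n^*\mu \longrightarrow (y\otimes\lambda)^*\mu = \lambda. \]
Each term on the left-hand side lies in $T^*(E^*)$, so $\lambda \in \overline{T^*(E^*)}$, as required.

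There is essentially no obstacle: the only point to be careful about is correctly computing the adjoint of a rank-one operator and picking $\mu$ so that the resulting limit is $\lambda$ itself rather than a scalar multiple. This also explains why the lemma is a ``variant'' of Lemma~\ref{PsiInjLemma}\romanref{onedimleftideal2}: in the non-closed case with finite $\Gamma$, one could choose $\mu$ and then assemble $g$ with finite support to land exactly in the range of $\Omega_\Gamma$, whereas here closure forces us to pass to a limit and hence land only in $\overline{T^*(E^*)}$.
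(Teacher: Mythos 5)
Your proof is correct and is essentially identical to the paper's: both write $y\otimes\lambda$ as a norm-limit of operators $S_nT$, pick $\mu\in E^*$ with $\langle y,\mu\rangle=1$, and observe that $T^*(S_n^*\mu)\to(y\otimes\lambda)^*\mu=\lambda$. No issues.
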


\begin{proof}
  Let $(S_j)_{j\in\N}$ be a sequence in~$\mathscr{B}(E)$ such that
  $S_jT\to y\otimes\lambda$ as $j\to\infty$, and choose $\mu\in E^*$
  such that $\langle y,\mu\rangle = 1$.  Then
  \[ T^*(S_j^*\mu) = (S_jT)^*\mu\to (y\otimes\lambda)^*\mu = \langle
  y,\mu\rangle\lambda = \lambda\quad\text{as}\quad j\to\infty, \]
  from which the conclusion follows.
\end{proof}

\begin{example}\label{c0exampleFailureof2ndDichotomy}
  Let $T$ be the operator on~$\ell_\infty$ given by
  \begin{equation}\label{c0exampleFailureof2ndDichotomyEq1}
    T(\alpha_j)_{j\in\N} = \biggl(-\frac{\alpha_n}{2^{n}} +
    \sum_{j=n+1}^\infty \frac{\alpha_j}{2^{j}}\biggr)_{n\in\N}\quad
    ((\alpha_j)_{j\in\N}\in \ell_\infty). \end{equation} Then~$T$ is
  compact and leaves the subspace~$c_0$ invariant. Define $$T_0\colon
  x\mapsto Tx,\quad c_0\to c_0,$$ and consider the closed left ideal
  $\mathscr{L} = \overline{\mathscr{L}_{\{T_0\}}}$
  of~$\mathscr{B}(c_0)$. We have
  $\mathscr{L}\subseteq\mathscr{K}(c_0)$ because~$T_0$ is compact. Our
  aim is to show that~$\mathscr{L}$ satisfies
  condition~\alphref{charfixedideals1}, but not
  condition~\alphref{charfixedideals2.5}, of
  Theorem~\ref{charfixedideals}.

  We begin by verifying that $\ker T = \C(1,1,\ldots)$. First, it  is clear
  that $T(1,1,\ldots) = (0,0,\ldots)$. Conversely, suppose that
  $(\alpha_j)_{j\in\N}\in\ker T$. Then
  \[ \frac{\alpha_n}{2^{n}} = \sum_{j=n+1}^\infty
  \frac{\alpha_j}{2^{j}}\quad (n\in\N), \] so that
  \[ \frac{\alpha_n}{2^{n}} = \frac{\alpha_{n+1}}{2^{n+1}} +
  \sum_{j=n+2}^\infty \frac{\alpha_j}{2^{j}} =
  \frac{\alpha_{n+1}}{2^{n+1}} + \frac{\alpha_{n+1}}{2^{n+1}} =
  \frac{\alpha_{n+1}}{2^{n}}\quad (n\in\N). \] Hence $\alpha_n =
  \alpha_{n+1}$ for each $n\in\N$, and the conclusion follows.

  This shows in particular that $T_0$ is injective because
  $c_0\cap\ker T = \{0\}$. Consequently
  $T_0\notin\mathscr{M}\!\mathscr{L}_x$ for each $x\in
  c_0\setminus\{0\}$, and so $\mathscr{L}$ satisfies
  condition~\alphref{charfixedideals1} of
  Theorem~\ref{charfixedideals}.

  On the other hand, identifying $c_0^{**}$ with~$\ell_\infty$ in the
  usual way, we find that $T_0^{**} = T$, which is not injective, so
  that $T_0^*$ does not have norm-dense range
  by~\cite[Theorem~3.1.17(b)]{meg}. Take $\lambda\in
  c_0^*\setminus\overline{T_0^*(c_0^*)}$ and $y\in
  E\setminus\{0\}$. Then, by Lemma~\ref{lemma5712},
  $y\otimes\lambda\notin\mathscr{L}$, so that $\mathscr{L}$ does not
  satisfy condition~\alphref{charfixedideals2.5} of
  Theorem~\ref{charfixedideals}.
\end{example}

\section{`Classical' Banach spaces for which each finitely-generated,
  maximal left ideal is fixed}\label{sectionAllfgidealsfixed}
\noindent
The purpose of this section is to show that
Question~\romanref{DalesQ1} has a positive answer for many standard
Banach spaces~$E$.

We begin by showing that a much stronger conclusion is true in certain
cases, namely that no finitely-generated, proper left ideal
of~$\mathscr{B}(E)$ contains~$\mathscr{F}(E)$. This result relies on
the following characterization of the finite subsets~$\Gamma$ of
$\mathscr{B}(E)$ that do \textsl{not} generate a proper left ideal in
terms of standard operator-theoretic properties of~$\Psi_\Gamma$.

\begin{lemma}\label{properidealgeneratedlemma}
  Let~$E$ be a non-zero Banach space. Then the following three
  conditions are equivalent for each non-empty, finite subset~$\Gamma$
  of~$\mathscr{B}(E)\colon$
  \begin{alphenumerate}
  \item\label{properidealgeneratedlemma1} the operator $\Psi_{\Gamma}$
    is bounded below and its range is complemented in~$E^{|\Gamma|};$
  \item\label{properidealgeneratedlemma2} the operator
    $\Psi_{\Gamma}$ is left invertible;
  \item\label{properidealgeneratedlemma3} $\mathscr{L}_{\Gamma} =
    \mathscr{B}(E)$.
  \end{alphenumerate}
\end{lemma}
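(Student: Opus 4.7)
The plan is to establish~\alphref{properidealgeneratedlemma2} $\Leftrightarrow$~\alphref{properidealgeneratedlemma3} as an essentially tautological consequence of the description of $\mathscr{L}_\Gamma$ recorded in~\eqref{eqFGLeftideal04072012}, and then to derive~\alphref{properidealgeneratedlemma1} $\Leftrightarrow$~\alphref{properidealgeneratedlemma2} from the folklore characterization of left-invertible operators between Banach spaces. Throughout write $n = |\Gamma|$, so that $\Psi_\Gamma \in \mathscr{B}(E, E^n)$.

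For~\alphref{properidealgeneratedlemma2} $\Leftrightarrow$~\alphref{properidealgeneratedlemma3}, simply observe that an equality $I_E = S \Psi_\Gamma$ with $S \in \mathscr{B}(E^n, E)$ asserts simultaneously that $S$ is a left inverse of $\Psi_\Gamma$ and, by~\eqref{eqFGLeftideal04072012}, that $I_E \in \mathscr{L}_\Gamma$; the latter, since $\mathscr{L}_\Gamma$ is a left ideal, is equivalent to $\mathscr{L}_\Gamma = \mathscr{B}(E)$. Thus~\alphref{properidealgeneratedlemma2} and~\alphref{properidealgeneratedlemma3} are two renamings of the same condition.

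For~\alphref{properidealgeneratedlemma2} $\Rightarrow$~\alphref{properidealgeneratedlemma1}, pick a left inverse $L \in \mathscr{B}(E^n, E)$ of $\Psi_\Gamma$. The inequality $\|x\| \leq \|L\|\|\Psi_\Gamma x\|$ for $x \in E$ shows $\Psi_\Gamma$ is bounded below, while $\Psi_\Gamma L \in \mathscr{B}(E^n)$ is idempotent — since $(\Psi_\Gamma L)^2 = \Psi_\Gamma (L \Psi_\Gamma) L = \Psi_\Gamma L$ — with range precisely $\Psi_\Gamma(E)$, so that range is complemented in~$E^n$. For the converse~\alphref{properidealgeneratedlemma1} $\Rightarrow$~\alphref{properidealgeneratedlemma2}, if $\Psi_\Gamma$ is bounded below then it is an isomorphism from $E$ onto its closed range $\Psi_\Gamma(E)$; composing the inverse isomorphism $\Psi_\Gamma(E) \to E$ with any projection $Q \in \mathscr{B}(E^n)$ onto $\Psi_\Gamma(E)$ supplied by condition~\alphref{properidealgeneratedlemma1} yields a left inverse of $\Psi_\Gamma$.

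No genuine obstacle arises; the whole argument is a packaging of~\eqref{eqFGLeftideal04072012} with the standard equivalence, for operators between Banach spaces, between left invertibility and being bounded below with complemented range. The only points meriting a moment of care are verifying that $\Psi_\Gamma L$ realizes the desired idempotent in one direction and that the pre-composition with the range projection genuinely lands in $\mathscr{B}(E^n, E)$ in the other — both are routine.
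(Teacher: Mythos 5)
Your proof is correct and follows exactly the route the paper takes: the equivalence of (b) and (c) is read off from the explicit description \eqref{eqFGLeftideal04072012} of $\mathscr{L}_\Gamma$, and the equivalence of (a) and (b) is the standard characterization of left-invertible operators between Banach spaces, which the paper simply cites as ``an easy standard result'' while you write out its (routine and correct) verification.
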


\begin{proof} The equivalence of~\alphref{properidealgeneratedlemma1}
  and~\alphref{properidealgeneratedlemma2} is an easy standard result,
  true for any operator be\-tween Banach spaces, while the equivalence
  of \alphref{properidealgeneratedlemma2}
  and~\alphref{properidealgeneratedlemma3} follows immediately
  from~\eqref{eqFGLeftideal04072012}.
\end{proof}

\begin{proposition}\label{charFEinFGleftidel} Let $E$ be a 
  Banach space, and let $n\in\N$.  Then $\mathscr{F}(E)$ is contained
  in a proper left ideal of~$\mathscr{B}(E)$ generated by~$n$
  operators if and only if $E^n$ contains a closed sub\-space which is
  isomorphic to~$E$ and which is not complemented in~$E^n$.
\end{proposition}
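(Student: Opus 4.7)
The strategy is to translate both sides of the proposition into operator-theoretic statements about~$\Psi_\Gamma$ via the results already established. By Corollary~\ref{Sept2011keylemma}, $\mathscr{F}(E)\subseteq\mathscr{L}_\Gamma$ precisely when $\Psi_\Gamma$ is bounded below; and by Lemma~\ref{properidealgeneratedlemma}, $\mathscr{L}_\Gamma=\mathscr{B}(E)$ precisely when $\Psi_\Gamma$ is bounded below with complemented range. Combining these, $\mathscr{L}_\Gamma$ is a \emph{proper} left ideal containing~$\mathscr{F}(E)$ if and only if $\Psi_\Gamma$ is bounded below with non-complemented range in~$E^{|\Gamma|}$. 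Since $\Psi_\Gamma$ bounded below means it is an isomorphism onto its (closed) range, the proposition reduces to matching such $\Psi_\Gamma$'s with closed, non-complemented copies of~$E$ inside~$E^n$.

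The forward direction is then immediate: if $\Gamma\subseteq\mathscr{B}(E)$ has cardinality~$n$ and satisfies $\mathscr{F}(E)\subseteq\mathscr{L}_\Gamma\subsetneq\mathscr{B}(E)$, then $F:=\Psi_\Gamma(E)$ is a closed copy of~$E$ inside~$E^n$ that is not complemented. For the converse, suppose $F\subseteq E^n$ is a closed, non-complemented subspace isomorphic to~$E$; pick a bounded-below operator $U\colon E\to E^n$ with $U(E)=F$ and decompose it as $U=\sum_{j=1}^n \iota_j T_j$ for operators $T_1,\ldots,T_n\in\mathscr{B}(E)$. Setting $\Gamma=\{T_1,\ldots,T_n\}$, the identity $\|\Psi_\Gamma x\|=\max_j\|T_j x\|=\|Ux\|$ shows that $\Psi_\Gamma$ is bounded below, so $\mathscr{F}(E)\subseteq\mathscr{L}_\Gamma$ by Corollary~\ref{Sept2011keylemma}.

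The one genuinely non-routine point is that the coordinates~$T_j$ need not be distinct, so that $|\Gamma|$ may be strictly smaller than~$n$ and $\Psi_\Gamma$ takes values in $E^{|\Gamma|}$ rather than~$E^n$; I would handle this by factorizing $U=W\Psi_\Gamma$ through the natural ``diagonal embedding'' $W\colon E^{|\Gamma|}\to E^n$ whose range
\[ \bigl\{(z_1,\ldots,z_n)\in E^n : z_j=z_{j'}\ \text{whenever}\ T_j=T_{j'}\bigr\} \]
is manifestly complemented in~$E^n$ (via the obvious ``pick a representative'' projection). Were $\Psi_\Gamma(E)$ complemented in~$E^{|\Gamma|}$, its image under~$W$ would be a complemented copy of~$F$ in~$E^n$, contradicting the hypothesis. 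Hence $\Psi_\Gamma$ is bounded below with non-complemented range, and Lemma~\ref{properidealgeneratedlemma} gives $\mathscr{L}_\Gamma\subsetneq\mathscr{B}(E)$. If a generating set of cardinality exactly~$n$ is required, one pads~$\Gamma$ with any $n-|\Gamma|$ further elements of~$\mathscr{L}_\Gamma$; these exist because $E$ must be infinite-dimensional (otherwise $\mathscr{F}(E)=\mathscr{B}(E)$ could not sit in a proper ideal), so $\mathscr{L}_\Gamma\supseteq\mathscr{F}(E)$ is infinite. The diagonal-embedding device to descend non-complementedness from~$E^n$ to~$E^{|\Gamma|}$ is the only place where the reduction needs more than a direct citation of the earlier lemmas.
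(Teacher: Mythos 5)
Your proof is correct, and its skeleton is the same as the paper's: both reduce the statement to properties of $\Psi_\Gamma$ via Corollary~\ref{Sept2011keylemma} ($\mathscr{F}(E)\subseteq\mathscr{L}_\Gamma$ iff $\Psi_\Gamma$ is bounded below) and Lemma~\ref{properidealgeneratedlemma} ($\mathscr{L}_\Gamma=\mathscr{B}(E)$ iff $\Psi_\Gamma$ is additionally left invertible), and both use the identity $\|\Psi_\Gamma x\|=\max_j\|T_jx\|=\|Ux\|$ to pass between a bounded-below operator $E\to E^n$ and its coordinate operators. The one place you genuinely diverge is the treatment of repeated coordinates in the converse direction. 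The paper argues both implications by contraposition, so in the converse it only needs to \emph{produce} a projection onto $F$: from $I_E=S\Psi_\Gamma$ it composes $S$ with the truncation $\sum_{j=1}^m\iota_j\rho_j\colon E^n\to E^m$ to obtain a left inverse of $U$ itself, and never has to compare complementation in $E^{|\Gamma|}$ with complementation in $E^n$. You argue directly, which forces you to transport \emph{non}-complementation from $E^n$ down to $E^{|\Gamma|}$; your diagonal embedding $W$ with $U=W\Psi_\Gamma$ and complemented range does this correctly (a projection $Q$ onto $\Psi_\Gamma(E)$ would yield the projection $WQW^{-1}P$ of $E^n$ onto $F$, where $P$ is your representative-picking projection), but it is extra machinery that the contrapositive formulation makes unnecessary. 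Your padding remark for restoring a generating set of cardinality exactly $n$ is fine and corresponds to the paper's "(at most) $n$ generators" phrasing.
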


\begin{proof} We may suppose that $E$ is non-zero, and prove both
  implications by contraposition. 

  $\Rightarrow$. Suppose that every closed subspace of~$E^n$ that is
  isomorphic to~$E$ is complemented in~$E^n$, and let~$\Gamma$ be a
  subset of~$\mathscr{B}(E)$ of cardinality~$n$ such that
  $\mathscr{F}(E)\subseteq \mathscr{L}_{\Gamma}$.  We must prove that
  $\mathscr{L}_{\Gamma} = \mathscr{B}(E)$; that is, by
  Lemma~\ref{properidealgeneratedlemma}, we must show that the
  operator~$\Psi_{\Gamma}$ is bounded below and has complemented
  range. Corollary~\ref{Sept2011keylemma} implies that $\Psi_{\Gamma}$
  is indeed bounded below, and its range is therefore a closed
  subspace of~$E^n$ isomorphic to~$E$, so that it is complemented by
  the assumption.

  $\Leftarrow$. Suppose that $\mathscr{B}(E)$ is the only left ideal
  with (at most)~$n$ generators that contains~$\mathscr{F}(E)$, and
  let~$F$ be a closed subspace of~$E^n$ such that~$F$ is isomorphic
  to~$E$.  We must prove that $F$ is complemented in~$E^n$. Take an
  operator $T\in\mathscr{B}(E, E^n)$ which is bounded below and has
  range~$F$, and let $T_j = \rho_j T\in\mathscr{B}(E)$, where
  \[ \rho_j\colon\ (x_k)_{k=1}^n\mapsto x_j,\quad E^n\to E\quad
  (j\in\{1,\ldots,n\}). \] Re-ordering the coordinates of~$E^n$, we
  may suppose that there exists a number \mbox{$m\le n$} such that
  $T_1,\ldots,T_m$ are distinct and the set $\Gamma =
  \{T_1,\ldots,T_m\}$ contains~$T_j$ for
  each~$j\in\{1,\ldots,n\}$. Then \mbox{$\|\Psi_{\Gamma}x\| =
    \max_{1\le j\le m}\|T_jx\| = \|Tx\|$} for each $x\in E$, so
  that~$\Psi_\Gamma$ is bounded below, and there\-fore
  $\mathscr{L}_{\Gamma}$ contains~$\mathscr{F}(E)$ by
  Corollary~\ref{Sept2011keylemma}. Now the assump\-tion implies that
  \mbox{$\mathscr{L}_{\Gamma} = \mathscr{B}(E)$}; that is, we can find
  an operator $S\in\mathscr{B}(E^m,E)$ such that
  \[ I_E = S\Psi_\Gamma = S\biggl(\sum_{j=1}^m
  \iota_j\rho_j\biggr)T. \] Hence~$T$ has a left inverse,
  and consequently its range, which is equal to~$F$, is complemented
  in~$E^n$.  \end{proof}

Combining this result with Theorem~\ref{dichotomythm}, we reach the
following conclusion.

\begin{corollary}\label{noproperfgleftidealcontainsFE}
  Let $E$ be a non-zero Banach space such that, for each $n\in\N$,
  every closed subspace of~$E^n$ that is isomorphic to~$E$ is
  complemented in~$E^n$.  Then~$\mathscr{B}(E)$ is the only
  finitely-generated left ideal of~$\mathscr{B}(E)$ which
  contains~$\mathscr{F}(E)$, and hence each finitely-generated,
  maximal left ideal of~$\mathscr{B}(E)$ is fixed.
\end{corollary}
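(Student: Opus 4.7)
The plan is to obtain this corollary as a direct combination of Proposition~\ref{charFEinFGleftidel} with the Dichotomy Theorem~\ref{dichotomythm}, so essentially no new work is required beyond assembling the pieces.

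First I would deal with the statement that $\mathscr{B}(E)$ is the only finitely-generated left ideal containing $\mathscr{F}(E)$. Let $\mathscr{L}$ be a finitely-generated left ideal of $\mathscr{B}(E)$ containing $\mathscr{F}(E)$, and write $\mathscr{L}=\mathscr{L}_\Gamma$ for some non-empty, finite subset $\Gamma$ of $\mathscr{B}(E)$ of cardinality $n=|\Gamma|\in\N$. I want to conclude $\mathscr{L}=\mathscr{B}(E)$. By the contrapositive of the forward direction of Proposition~\ref{charFEinFGleftidel}: if $\mathscr{F}(E)$ is contained in some proper left ideal generated by $n$ operators, then $E^n$ contains a closed subspace isomorphic to $E$ that is \emph{not} complemented in $E^n$. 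The hypothesis on~$E$ rules this out, so no such proper left ideal exists, and therefore $\mathscr{L}=\mathscr{B}(E)$.

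For the second assertion, suppose $\mathscr{L}$ is a finitely-generated, maximal left ideal of $\mathscr{B}(E)$. Then $\mathscr{L}$ is proper, so by the first part of the corollary just proved, $\mathscr{L}$ cannot contain $\mathscr{F}(E)$. The Dichotomy Theorem~\ref{dichotomythm} then forces $\mathscr{L}$ to be fixed, completing the proof.

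There is no real obstacle to overcome here: the substantive work has already been done in Proposition~\ref{charFEinFGleftidel} (which reduces the existence of a proper finitely-generated left ideal containing $\mathscr{F}(E)$ to the presence of an uncomplemented isomorphic copy of $E$ inside some $E^n$) and in Theorem~\ref{dichotomythm} (which sharply splits maximal left ideals into the fixed ones and those containing $\mathscr{F}(E)$). The only small subtlety to attend to is the case $E=\{0\}$, which is excluded by hypothesis, and the trivial observation that in the first step we need $\Gamma$ to be non-empty; if necessary, one may simply enlarge any given finite generating set by a single operator (say $I_E$ if $\mathscr{L}=\mathscr{B}(E)$, or any generator otherwise) without changing $\mathscr{L}_\Gamma$.
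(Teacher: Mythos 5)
Your proof is correct and is essentially the paper's own argument: the paper derives this corollary precisely by combining Proposition~\ref{charFEinFGleftidel} (to rule out proper finitely-generated left ideals containing $\mathscr{F}(E)$) with Theorem~\ref{dichotomythm} (to conclude that a finitely-generated maximal left ideal, not containing $\mathscr{F}(E)$, must be fixed). The final remark about non-emptiness of $\Gamma$ is unnecessary, since the paper's definition of ``finitely generated'' already requires a non-empty generating set, but it does no harm.
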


\begin{example}\label{exampleAutocomplemented} 
The condition of Corollary~\ref{noproperfgleftidealcontainsFE} on the
Banach space~$E$ is satisfied in each of the following three cases:
  \begin{romanenumerate}
  \item $E$ is a Hilbert space.
  \item\label{exampleAutocomplemented3} $E$ is an injective Banach
    space; that is, whenever a Banach space~$F$ contains a closed
    sub\-space~$G$ which is isomorphic to~$E$, then $G$ is
    complemented in~$F$. For instance, the Banach space $E =
    \ell_\infty(\mathbb{I})$ is injective for each non-empty
    set~$\mathbb{I}$. More generally, $ C(K)$ is injective whenever
    the Hausdorff space~$K$ is Stonean (that is, compact and extremely
    disconnected), as shown by Goodner~\cite{goodner} and
    Nachbin~\cite{nachbin} for real scalars and generalized to the
    complex case by Cohen~\cite{cohen}.
  \item\label{exampleAutocomplementedc0} $E = c_0(\mathbb{I})$ for a
    non-empty set~$\mathbb{I}$ (this follows from Sobczyk's
    theorem~\cite{sob} for countable~$\mathbb{I}$ and
    from~\cite{granero} (or~\cite[Proposition~2.8]{argyrosetal}) in
    the general case); here $c_0(\mathbb{I})$ denotes the closed
    subspace of~$\ell_\infty(\mathbb{I})$ consisting of those
    functions~$f\colon\mathbb{I}\to\C$ for which the set $\bigl\{
    i\in\mathbb{I} : |f(i)|\ge\epsilon\bigr\}$ is finite for each
    $\epsilon>0$.
  \end{romanenumerate}
  Thus, in each of these three cases, $\mathscr{B}(E)$ is the only
  finitely-generated left ideal of~$\mathscr{B}(E)$ which
  contains~$\mathscr{F}(E)$, and each finitely-generated, maximal left
  ideal of~$\mathscr{B}(E)$ is fixed.
\end{example}

Our next goal is to prove a result (Theorem~\ref{Thmlp}) which, under
much less restrictive con\-di\-tions on the Banach space~$E$ than
Corollary~\ref{noproperfgleftidealcontainsFE}, gives the slightly
weaker con\-clu\-sion that $\mathscr{B}(E)$ is the only
finitely-generated left ideal of~$\mathscr{B}(E)$ which
contains~$\mathscr{K}(E)$. We note in particular that
Corollary~\ref{June12dich1} ensures that this conclusion is still
strong enough to ensure that each finitely-generated, maximal left
ideal of~$\mathscr{B}(E)$ is fixed, thus answering
Question~\romanref{DalesQ1} positively for a large number of Banach
spaces.

Let~$E$ be a Banach space with a Schauder basis $\mathbf{e} =
(e_j)_{j\in\N}$. For each $k\in\N$, we denote by $P_k$ the
$k^{\text{th}}$ basis projection associated with~$\mathbf{e}$. The
\emph{basis constant} of~$\mathbf{e}$ is \[ K_{\mathbf{e}} =
\sup\bigl\{\| P_k\| : k\in\N\bigr\}\in [1,\infty). \] The
  basis~$\mathbf{e}$ is \emph{monotone} if $K_{\mathbf{e}} = 1$.
\begin{lemma}\label{fdposdiagopnorm}
  Let $E$ be a Banach space with a Schauder basis $\mathbf{e} =
  (e_j)_{j\in\N}$, and let $\gamma = (\gamma_j)_{j\in\N}$ be a
  decreasing sequence of non-negative real numbers. Then
  \begin{equation}\label{fdposdiagopnormEq1}
    \Delta_\gamma\colon\ \sum_{j=1}^\infty\alpha_j e_j\mapsto
    \sum_{j=1}^\infty\gamma_j\alpha_j e_j
  \end{equation}
  defines an operator~$\Delta_\gamma$ on~$E$ of norm at most
  $K_{\mathbf{e}}\gamma_1$. This operator is compact if and only if
  $\gamma_j\to 0$ as $j\to\infty$.
\end{lemma}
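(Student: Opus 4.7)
The plan is to use Abel summation to decompose $\Delta_\gamma$ as a scalar multiple of the identity plus an explicit compact perturbation. Since the sequence $\gamma = (\gamma_j)$ is decreasing and non-negative, it converges to some limit $\gamma_\infty \in [0,\gamma_1]$, and the telescoping identity $\sum_{j=1}^{\infty}(\gamma_j-\gamma_{j+1}) = \gamma_1 - \gamma_\infty$ holds. Combined with the basic estimate $\|P_j\| \leq K_{\mathbf{e}}$ for every $j$, this shows that the formal series
\[ U = \sum_{j=1}^{\infty}(\gamma_j-\gamma_{j+1})P_j \]
converges absolutely in~$\mathscr{B}(E)$. Since each $P_j$ is finite rank, $U$ is a norm limit of finite-rank operators, hence compact.

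The next step is to identify $\Delta_\gamma$ as $\gamma_\infty I_E + U$. For $x = \sum_{j=1}^{\infty}\alpha_j e_j \in E$ and $n\in\N$, Abel summation applied to $\sum_{j=1}^{n}\gamma_j(P_j-P_{j-1})x$ (with $P_0 = 0$) yields
\[ \sum_{j=1}^{n}\gamma_j \alpha_j e_j = \gamma_n P_n x + \sum_{j=1}^{n-1}(\gamma_j-\gamma_{j+1})P_j x. \]
Passing to the limit $n\to\infty$ and using $P_n x \to x$ together with $\gamma_n \to \gamma_\infty$ shows that the series in~\eqref{fdposdiagopnormEq1} converges to $\gamma_\infty x + Ux$, so $\Delta_\gamma$ is well defined and equals $\gamma_\infty I_E + U$. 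The norm bound then follows from the triangle inequality:
\[ \|\Delta_\gamma\| \leq \gamma_\infty + \|U\| \leq \gamma_\infty + K_{\mathbf{e}}(\gamma_1-\gamma_\infty) \leq K_{\mathbf{e}}\gamma_1, \]
the last step being valid because $K_{\mathbf{e}} \geq 1$.

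For the compactness assertion, one direction is immediate: if $\gamma_j \to 0$, then $\gamma_\infty = 0$ and $\Delta_\gamma = U$ is compact. Conversely, if $\gamma_\infty > 0$ and $E$ is infinite-dimensional, then $\gamma_\infty I_E = \Delta_\gamma - U$ would be compact were $\Delta_\gamma$ compact, contradicting the fact that the identity on an infinite-dimensional Banach space is not compact. (The finite-dimensional case of the lemma is trivial because every operator is then compact and $\gamma_j \to 0$ is vacuously obtainable only in the zero case, but one may simply assume $E$ is infinite-dimensional from the Schauder-basis hypothesis.)

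I do not expect any serious obstacle: the only delicate point is verifying that the Abel-summation rearrangement legitimately converges to a bounded operator, which is handled by the absolute convergence of $U$ together with the continuity of the basis projections. The decomposition $\Delta_\gamma = \gamma_\infty I_E + U$ is the workhorse of the proof, giving both the norm bound and the compactness characterisation at once.
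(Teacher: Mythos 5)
Your proof is correct, and while the norm estimate rests on the same Abel-summation/telescoping identity that the paper uses (the paper writes $\Delta_\gamma x = \sum_{j=1}^{k-1}(\gamma_j-\gamma_{j+1})P_jx + \gamma_k x$ for finitely supported $x$ and estimates term by term), your treatment of the compactness criterion is genuinely different. The paper invokes the fact that $(P_j)_{j\in\N}$ is a bounded left approximate identity for $\mathscr{K}(E)$, so that $\Delta_\gamma$ is compact if and only if $P_j\Delta_\gamma\to\Delta_\gamma$ in norm, and then establishes the two-sided estimate $\gamma_{j+1}\le\|(I_E-P_j)\Delta_\gamma\|\le K_{\mathbf{e}}(K_{\mathbf{e}}+1)\gamma_{j+1}$. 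You instead push the telescoping to the limit and obtain the global decomposition $\Delta_\gamma=\gamma_\infty I_E+U$ with $U=\sum_{j=1}^\infty(\gamma_j-\gamma_{j+1})P_j$ absolutely convergent and hence compact, after which both directions of the equivalence follow at once ($\gamma_\infty=0$ gives $\Delta_\gamma=U$; $\gamma_\infty>0$ would make $\gamma_\infty I_E$ compact on the infinite-dimensional space $E$, which is impossible). Your route is more self-contained --- it avoids the approximate-identity fact the paper cites --- and it yields the extra structural information that $\Delta_\gamma$ is always a compact perturbation of $\gamma_\infty I_E$; the paper's route, in exchange, produces the quantitative estimate on $\|(I_E-P_j)\Delta_\gamma\|$, which measures precisely how fast the compact approximation converges. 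Your parenthetical about the finite-dimensional case is unnecessary (and slightly garbled), since a Schauder basis indexed by $\N$ already forces $E$ to be infinite-dimensional, but this does not affect the argument.
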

\begin{proof} 
  Equation~\eqref{fdposdiagopnormEq1} clearly defines a linear
  mapping~$\Delta_\gamma$ from the dense subspace
  $\operatorname{span}\{e_j : j\in\N\}$ of~$E$ into~$E$, and so it
  suffices to show that this mapping is bounded with norm at most
  $K_{\mathbf{e}}\gamma_1$. Now, for each element $x =
  \sum_{j=1}^k\alpha_je_j$ of~$\operatorname{span}\{e_j :
  j\in\N\}$, where $k\in\N$ and $\alpha_1,\ldots,\alpha_k\in\C$, we
  have
  \[ \Delta_\gamma x = \gamma_1P_1x + \sum_{j=2}^k\gamma_j(P_jx -
  P_{j-1}x) = \sum_{j=1}^{k-1}(\gamma_j - \gamma_{j+1})P_jx +
  \gamma_kx, \] and thus
  \[ \|\Delta_\gamma x\|\le \sum_{j=1}^{k-1}(\gamma_j - \gamma_{j+1})
  K_{\mathbf{e}}\|x\| + \gamma_k\|x\|\le
  K_{\mathbf{e}}\gamma_1\|x\|, \] as required.

  To prove the final clause, we note that, by a standard result,
  $(P_j)_{j\in\N}$ is a bounded left approximate identity
  for~$\mathscr{K}(E)$ (\emph{e.g.}, see~\cite[p.~318]{dales}), so
  that~$\Delta_\gamma$ is compact if and only if
  $P_j\Delta_\gamma\to\Delta_\gamma$ as $j\to\infty$. Hence the
  estimates
  \[ \gamma_{j+1}\le\bigl\|(I_E - P_j)\Delta_\gamma\bigr\|\le
  K_{\mathbf{e}}(K_{\mathbf{e}}+1)\gamma_{j+1}\quad (j\in\N), \]
  which are easy to verify, give the result.
\end{proof}

\begin{corollary}\label{posdiagopinfdim}
  Let $E$ be a Banach space with a Schauder basis $\mathbf{e} =
  (e_j)_{j\in\N}$, let $k\in\N$, and let $\eta = (\eta_j)_{j=1}^k$ be
  an increasing $k$-tuple of non-negative real numbers. Then
  \begin{equation}\label{posdiagopinfdimEq1}
    \Theta_\eta\colon\ \sum_{j=1}^\infty\alpha_j e_j\mapsto
    \sum_{j=1}^k\eta_j\alpha_j e_j \end{equation}
  defines an operator on~$E$ of norm at most~$2K_{\mathbf{e}}\eta_k$.
\end{corollary}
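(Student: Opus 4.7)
My plan is to reduce Corollary~\ref{posdiagopinfdim} to the previous Lemma~\ref{fdposdiagopnorm} by decomposing $\Theta_\eta$ as a difference of two manageable operators: a scaled basis projection plus a ``decreasing-diagonal'' operator of the form covered by that lemma. The increasing-versus-decreasing hypothesis mismatch between the two statements is exactly what this decomposition is designed to bridge.

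Concretely, I would define a sequence $\gamma = (\gamma_j)_{j\in\N}$ by setting $\gamma_j = \eta_k - \eta_j$ for $1\le j\le k$ and $\gamma_j = 0$ for $j > k$. Since $\eta$ is increasing, $\gamma$ is a decreasing sequence of non-negative reals with $\gamma_1 = \eta_k - \eta_1 \le \eta_k$, so Lemma~\ref{fdposdiagopnorm} applies and produces a bounded operator $\Delta_\gamma$ on $E$ with $\|\Delta_\gamma\|\le K_{\mathbf{e}}\gamma_1 \le K_{\mathbf{e}}\eta_k$.

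The next step is to verify the identity $\Theta_\eta = \eta_k P_k - \Delta_\gamma$ on the dense subspace $\operatorname{span}\{e_j:j\in\N\}$: for $x = \sum_j \alpha_j e_j$, one has $(\eta_k P_k - \Delta_\gamma)x = \sum_{j=1}^k \eta_k \alpha_j e_j - \sum_{j=1}^k(\eta_k - \eta_j)\alpha_j e_j = \sum_{j=1}^k \eta_j \alpha_j e_j = \Theta_\eta x$. This shows $\Theta_\eta$ extends to a bounded operator on $E$, and the triangle inequality combined with $\|P_k\|\le K_{\mathbf{e}}$ gives
\[ \|\Theta_\eta\| \;\le\; \eta_k\|P_k\| + \|\Delta_\gamma\| \;\le\; K_{\mathbf{e}}\eta_k + K_{\mathbf{e}}\eta_k \;=\; 2K_{\mathbf{e}}\eta_k, \]
which is the desired bound.

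There is really no main obstacle here; the only subtlety is choosing the auxiliary sequence~$\gamma$ so that it is genuinely \emph{decreasing} (hence eligible for Lemma~\ref{fdposdiagopnorm}) while satisfying $\gamma_1 \le \eta_k$, and padding it with zeros beyond index~$k$ to account for the fact that $\Theta_\eta$ has finite rank while $\Delta_\gamma$ is defined on all of $E$. Both features are automatic from the choice $\gamma_j = \eta_k - \eta_j$.
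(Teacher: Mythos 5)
Your proof is correct, but it takes a genuinely different route from the paper's. The paper reverses the order of the first $k$ basis vectors to obtain a new Schauder basis $\mathbf{f}$ whose basis projections are $P_k-P_{k-m}$ (for $m<k$) and $P_m$ (for $m\ge k$), hence $K_{\mathbf{f}}\le 2K_{\mathbf{e}}$; with respect to $\mathbf{f}$ the tuple $\eta$ becomes the decreasing sequence $(\eta_k,\ldots,\eta_1,0,0,\ldots)$, and a single application of Lemma~\ref{fdposdiagopnorm} gives $\|\Theta_\eta\|\le K_{\mathbf{f}}\eta_k\le 2K_{\mathbf{e}}\eta_k$. You instead keep the original basis and write $\Theta_\eta=\eta_k P_k-\Delta_\gamma$ with $\gamma_j=\eta_k-\eta_j$ for $j\le k$ and $\gamma_j=0$ for $j>k$, which is indeed a decreasing non-negative sequence, so Lemma~\ref{fdposdiagopnorm} applies and the triangle inequality gives the bound. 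Your decomposition avoids the (mild) work of checking that the re-ordered system is a Schauder basis with basis constant at most $2K_{\mathbf{e}}$, and it even yields the slightly sharper estimate $\|\Theta_\eta\|\le K_{\mathbf{e}}(2\eta_k-\eta_1)$; the paper's re-ordering trick, on the other hand, is a single application of the lemma and generalizes more directly to situations where one wants to treat the reversed tuple as a diagonal operator in its own right. Both arguments are complete and give the stated bound.
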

\begin{proof}
  Define \[ f_j = \begin{cases} e_{k-j+1} &\text{for}\ j\le k,\\ e_j
    &\text{for}\ j>k.\end{cases} \] Then $\mathbf{f} = (f_j)_{j\in\N}$
  is a Schauder basis for~$E$ (because we have re-ordered only finitely
  many vectors of the original basis~$\mathbf{e}$), and the
  $m^{\text{th}}$ basis projection associated with $\mathbf{f}$ is
  given by $P_k - P_{k-m}$ for $m < k$ and $P_m$ for $m\ge k$, so that
  $K_{\mathbf{f}}\le 2K_{\mathbf{e}}$. Now Lemma~\ref{fdposdiagopnorm}
  gives the desired conclusion because \[ \Theta_\eta =
  \Delta_\gamma\colon\ \sum_{j=1}^\infty \alpha_j
  f_j\mapsto\sum_{j=1}^\infty \gamma_j\alpha_j f_j,\quad E\to E, \]
  where $\gamma=(\gamma_j)_{j\in\N}$ denotes the decreasing sequence
  $(\eta_k, \eta_{k-1},\ldots,\eta_1,0,0,\ldots)$.
\end{proof}

We now come to our key lemma. 
\begin{lemma}\label{TKTKNJLthm11June2012}
  Let $E$ be a Banach space with a monotone Schauder
  basis, and let $\Gamma$ be a~non-empty, finite subset
  of~$\mathscr{B}(E)$ for which
  $\mathscr{F}(E)\subseteq\mathscr{L}_{\Gamma}$.  Then the sequence
  $(t_j)_{j\in\N}$ given by
  \begin{equation}\label{TKTKNJLthm11June2012Eq1}
    t_j = \inf\bigl\{\| T\| : T\in\mathscr{B}(E^{|\Gamma|},E),\, P_j =
    T\Psi_{\Gamma}\bigr\}\in(0,\infty) \quad
    (j\in\N) \end{equation} 
  is increasing.
  
  Suppose that $(t_j)_{j\in\N}$ is unbounded, and let $\gamma =
  (t_j^{-1/2})_{j\in\N}$. Then the operator~$\Delta_\gamma$ given
  by~\eqref{fdposdiagopnormEq1} is compact and does not belong
  to~$\mathscr{L}_{\Gamma}$. 
\end{lemma}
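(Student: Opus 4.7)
The plan has four logical steps: verify that the infimum defining $t_j$ is finite, check that it is strictly positive, establish monotonicity of $(t_j)$, and finally derive the two claimed consequences for $\Delta_\gamma$ when $(t_j)$ is unbounded.

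For finiteness, I would note that since $P_j \in \mathscr{F}(E) \subseteq \mathscr{L}_\Gamma$ by hypothesis, the representation \eqref{eqFGLeftideal04072012} produces some $T \in \mathscr{B}(E^{|\Gamma|},E)$ with $P_j = T\Psi_\Gamma$, so the infimum is taken over a non-empty set. For positivity, every such $T$ satisfies $\|P_j\| \leq \|T\|\|\Psi_\Gamma\|$; monotonicity of $\mathbf{e}$ gives $\|P_j\| = 1$ for $j \geq 1$, while Corollary~\ref{Sept2011keylemma} ensures that $\Psi_\Gamma$ is bounded below and in particular nonzero, so $t_j \geq 1/\|\Psi_\Gamma\| > 0$. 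For the monotonicity of the sequence, given any $T$ with $P_{j+1} = T\Psi_\Gamma$, left-composition by $P_j$ yields $P_j = P_j P_{j+1} = (P_j T)\Psi_\Gamma$; since $\|P_j T\| \leq \|P_j\|\|T\| = \|T\|$, taking the infimum over such $T$ gives $t_j \leq t_{j+1}$.

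For the final claim, the sequence $\gamma = (t_j^{-1/2})_{j\in\N}$ is decreasing with $\gamma_j \to 0$ (using that $t_j \to \infty$), so Lemma~\ref{fdposdiagopnorm} immediately gives that $\Delta_\gamma$ is compact. The substantive part is showing $\Delta_\gamma \notin \mathscr{L}_\Gamma$. Assuming for contradiction that $\Delta_\gamma = S\Psi_\Gamma$ for some $S \in \mathscr{B}(E^{|\Gamma|},E)$, the key trick is to pair $\Delta_\gamma$ with the operator $\Theta_\eta$ from Corollary~\ref{posdiagopinfdim}, where $\eta = (t_i^{1/2})_{i=1}^{j}$ is an increasing $j$-tuple (increasing because $(t_i)$ is). A direct computation on basis vectors gives the identity $\Theta_\eta \Delta_\gamma = P_j$, because the products $\eta_i \gamma_i$ equal $1$ for $i \leq j$. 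Consequently $P_j = (\Theta_\eta S)\Psi_\Gamma$, so $\Theta_\eta S$ is admissible in the infimum defining $t_j$, yielding
\[ t_j \;\leq\; \|\Theta_\eta S\| \;\leq\; \|\Theta_\eta\|\,\|S\| \;\leq\; 2 K_{\mathbf{e}} \eta_j \|S\| \;=\; 2 t_j^{1/2}\|S\|, \]
that is, $t_j^{1/2} \leq 2\|S\|$ for every $j$. This contradicts the unboundedness of $(t_j)$.

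The main obstacle is locating the right `rescaling' auxiliary operator: the estimate $\|\Theta_\eta\| \leq 2 K_{\mathbf{e}}\eta_j$ from Corollary~\ref{posdiagopinfdim} is exactly the growth rate that makes the choice of exponent $1/2$ in the definition of $\gamma$ produce a contradiction (a milder decay of $\gamma$ would leave the bound $t_j \leq \|\Theta_\eta\|\|S\|$ too weak). Everything else is bookkeeping built on Corollary~\ref{Sept2011keylemma} and the monotone basis estimates already established in Lemma~\ref{fdposdiagopnorm} and Corollary~\ref{posdiagopinfdim}.
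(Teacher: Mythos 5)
Your proposal is correct and follows essentially the same route as the paper: the same non-emptiness and lower bound $t_j\ge\|\Psi_\Gamma\|^{-1}$, the same left-composition by $P_j$ for monotonicity, and the same contradiction via the identity $\Theta_{\eta}\Delta_\gamma = P_j$ and the estimate $t_j\le\|\Theta_\eta S\|\le 2t_j^{1/2}\|S\|$ from Corollary~\ref{posdiagopinfdim}. No gaps.
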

\begin{proof}
  Set $n=|\Gamma|\in\N$. For each $j\in\N$, we have
  $P_j\in\mathscr{F}(E)\subseteq\mathscr{L}_{\Gamma}$, so
  that~\eqref{eqFGLeftideal04072012} ensures that the set appearing in
  the definition~\eqref{TKTKNJLthm11June2012Eq1} of~$t_j$ is
  non-empty, and further that $t_j\ge\|\Psi_{\Gamma}\|^{-1}>0$.  To see that
  $t_{j+1}\ge t_j$, suppose that $P_{j+1} = T\Psi_{\Gamma}$ for some
  $T\in\mathscr{B}(E^n,E)$. Then $P_j = (P_jT)\Psi_{\Gamma}$, so that
  $t_j\le \|P_jT\|\le \|T\|$ by the monotonicity of the Schauder basis
  for~$E$.

  The first part of the final clause (that~$\Delta_\gamma$ is compact
  if $(t_j)_{j\in\N}$ is unbounded) is immediate from
  Lemma~\ref{fdposdiagopnorm}. We shall prove the second part by
  contraposition.  Suppose that
  $\Delta_\gamma\in\mathscr{L}_{\Gamma}$, so that $\Delta_\gamma =
  S\Psi_{\Gamma}$ for some $S\in\mathscr{B}(E^n,E)$.  Then, for each
  $k\in\N$, we have a commutative diagram
  \[ \spreaddiagramrows{3ex}\spreaddiagramcolumns{6ex}%
  \xymatrix{%
    E\ar^-{\displaystyle{P_k}}[r]%
    \ar^-{\displaystyle{\Delta_\gamma}}[rd]%
    \ar_-{\displaystyle{\Psi_{\Gamma}}}[d]
    &E\\ E^n\ar^-{\displaystyle{S}}[r]
    &E\smashw{,}\ar_-{\displaystyle{\Theta_{\eta(k)}}}[u]} \] where
  $\eta(k) = (t_j^{1/2})_{j=1}^k$ and the operator $\Theta_{\eta(k)}$
  is given by~\eqref{posdiagopinfdimEq1}.  Hence, by the
  definition~\eqref{TKTKNJLthm11June2012Eq1} of~$t_k$ and
  Corollary~\ref{posdiagopinfdim}, we obtain
  \[ t_k\le \|\Theta_{\eta(k)} S\|\le 2t_k^{1/2}\|S\|, \] which
  implies that the sequence $(t_j)_{j\in\N}$ is bounded by $4\|S\|^2$.
\end{proof}

\begin{theorem}\label{Thmlp}
  Let $E$ be a Banach space which is complemented in its bi\-dual and
  has a~Schauder basis. Then $\mathscr{B}(E)$ is the only
  finitely-generated left ideal of~$\mathscr{B}(E)$ which
  contains~$\mathscr{K}(E)$, and hence each finitely-generated,
  maximal left ideal of~$\mathscr{B}(E)$ is fixed.
\end{theorem}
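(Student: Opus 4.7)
The plan is to prove the first conclusion of the theorem—that the only finitely-generated left ideal of $\mathscr{B}(E)$ containing $\mathscr{K}(E)$ is $\mathscr{B}(E)$ itself—and then derive the second conclusion from Corollary~\ref{June12dich1}. Indeed, any finitely-generated, maximal left ideal $\mathscr{L}$ that failed to be fixed would, by that corollary, contain $\mathscr{E}(E)$ and hence $\mathscr{K}(E)$, so the first conclusion would force $\mathscr{L} = \mathscr{B}(E)$, contradicting its properness.

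For the first conclusion I would fix a non-empty, finite subset $\Gamma$ of~$\mathscr{B}(E)$ with $\mathscr{K}(E) \subseteq \mathscr{L}_{\Gamma}$, set $n = |\Gamma|$, and replace the norm on~$E$ by an equivalent one that renders the Schauder basis monotone. This renorming does not affect $\mathscr{B}(E)$ qua algebra, its left ideals, their finite generation, or the property of being complemented in the bidual, so Lemma~\ref{TKTKNJLthm11June2012} applies and furnishes the increasing sequence $(t_j)_{j\in\N}$ defined by~\eqref{TKTKNJLthm11June2012Eq1}. This sequence must be bounded, for otherwise the lemma produces a compact operator $\Delta_\gamma \notin \mathscr{L}_\Gamma$, contradicting the assumption $\mathscr{K}(E) \subseteq \mathscr{L}_\Gamma$.

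Assuming $t_j \leq M$ for every~$j$, I may select for each~$j$ an operator $T_j \in \mathscr{B}(E^n, E)$ with $\|T_j\| \leq M+1$ and $P_j = T_j \Psi_\Gamma$. The aim is then to manufacture from $(T_j)$ a single $R \in \mathscr{B}(E^n, E)$ with $R \Psi_\Gamma = I_E$, since Lemma~\ref{properidealgeneratedlemma} will turn this into the equality $\mathscr{L}_\Gamma = \mathscr{B}(E)$. Passing to biadjoints, $(T_j^{**})$ lies in a norm-bounded subset of $\mathscr{B}(E^{n**}, E^{**})$, which is compact in the point-weak$^*$ topology by Banach--Alaoglu and Tychonoff; let $S \in \mathscr{B}(E^{n**}, E^{**})$ be a cluster point of this net. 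Under the natural identification $E^{n**} \cong (E^{**})^n$ one has $\Psi_\Gamma^{**}(\kappa_E x) = \kappa_{E^n}(\Psi_\Gamma x)$ for every $x \in E$, and the defining property of a Schauder basis gives $P_j x \to x$ in norm, so $T_j^{**} \Psi_\Gamma^{**}(\kappa_E x) = \kappa_E(P_j x) \to \kappa_E(x)$, and therefore $S \Psi_\Gamma^{**}(\kappa_E x) = \kappa_E(x)$ for every $x \in E$. Let $\pi \colon E^{**} \to \kappa_E(E)$ be a bounded projection supplied by the complementation hypothesis; then $R := \kappa_E^{-1} \circ \pi \circ S \circ \kappa_{E^n} \in \mathscr{B}(E^n, E)$ satisfies $R \Psi_\Gamma = \kappa_E^{-1} \circ \pi \circ \kappa_E = I_E$, as required.

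The main technical hurdle is the bidual cluster-point step: one must treat $(T_j^{**})$ as a net admitting subnets rather than subsequences (since $E^{**}$ need not be separable), keep the topologies straight (point-weak$^*$ on $\mathscr{B}(E^{n**}, E^{**})$ and weak$^*$ on $E^{**}$), and handle the identification $(E^n)^{**} \cong (E^{**})^n$ carefully. This is precisely the step where the hypothesis that $E$ is complemented in its bidual is used; all else is a packaging of Lemma~\ref{TKTKNJLthm11June2012} followed by Lemmas~\ref{properidealgeneratedlemma} and Corollary~\ref{June12dich1}.
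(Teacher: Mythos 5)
Your proposal is correct and follows essentially the same route as the paper: monotone renorming, Lemma~\ref{TKTKNJLthm11June2012} to force boundedness of $(t_j)_{j\in\N}$, a weak$^*$-compactness argument on the resulting bounded sequence of operators to produce a limit operator satisfying (in effect) $T\Psi_\Gamma=\kappa_E$, and finally the complementation of~$E$ in~$E^{**}$ to left-invert~$\kappa_E$, with the second clause deduced from Corollary~\ref{June12dich1}. The only cosmetic difference is that the paper takes a weak$^*$-accumulation point of $(\kappa_E T_j)_{j\in\N}$ directly in $\mathscr{B}(E^n,E^{**})\cong(E^n\widehat{\otimes}E^*)^*$, whereas you pass to the biadjoints $T_j^{**}$ and a point-weak$^*$ cluster point before restricting back along~$\kappa_{E^n}$; both versions are sound.
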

\begin{proof}
  Let $\mathbf{e} = (e_j)_{j\in\N}$ be a Schauder basis for~$E$. By
  passing to an equivalent norm on~$E$, we may suppose
  that~$\mathbf{e}$ is monotone.  Suppose that $\Gamma$ is a
  non-empty, finite subset of~$\mathscr{B}(E)$ such that
  $\mathscr{K}(E)\subseteq\mathscr{L}_{\Gamma}$, and set $n =
  |\Gamma|\in\N$. Lemma~\ref{TKTKNJLthm11June2012} implies that the
  sequence $(t_j)_{j\in\N}$ given by~\eqref{TKTKNJLthm11June2012Eq1}
  is bounded, so that we can find a bounded sequence $(T_j)_{j\in\N}$
  in~$\mathscr{B}(E^n,E)$ such that $P_j = T_j\Psi_{\Gamma}$ for each
  $j\in\N$.

  We may identify $\mathscr{B}(E^n,E^{**})$ with the dual space of the
  projective tensor product $E^n\widehat{\otimes}E^*$; the duality
  bracket is given by
  \[ \langle x\otimes\lambda, S\rangle = \langle \lambda,
  Sx\rangle\quad (x\in E^n,\, \lambda\in E^*,\,
  S\in\mathscr{B}(E^n,E^{**})) \] (\emph{e.g.},
  see~\cite[Proposition~A.3.70]{dales}). Hence
  $\mathscr{B}(E^n,E^{**})$ carries a weak$^*$-topol\-o\-gy, with
  respect to which its unit ball is compact, and so the sequence
  $(\kappa_E T_j)_{j\in\N}$ has a weak$^*$-accumulation point,
  say~$T\in\mathscr{B}(E^{n},E^{**})$. Then, for each $j\in\N$,
  $\lambda\in E^*$, and $\epsilon >0$, we can find an integer $k\ge j$
  such that
  \begin{align*}
    \epsilon &\geq \bigl|\langle\Psi_{\Gamma}e_j\otimes\lambda,
    T-\kappa_E T_k\rangle\bigr|\\ &= \bigl|\langle\lambda, (T -
    \kappa_E T_k)\Psi_{\Gamma}e_j\rangle\bigr| = \bigl|\langle\lambda,
    T\Psi_{\Gamma}e_j -\kappa_E e_j\rangle\bigr|.
  \end{align*}
  Since $\epsilon>0$ and $\lambda\in E^*$ were arbitrary, we conclude
  that $T\Psi_{\Gamma}e_j =\kappa_E e_j$, and therefore
  $T\Psi_{\Gamma} =\kappa_E$.  By the assumption, $\kappa_E$ has a
  left inverse, say
  \mbox{$\Lambda\in\mathscr{B}(E^{**},E)$}. Consequently $I_E =
  (\Lambda T)\Psi_{\Gamma}\in\mathscr{L}_{\Gamma}$, and so
  $\mathscr{L}_{\Gamma} = \mathscr{B}(E)$.
\end{proof}

\begin{example}\label{fgmaxleftidealsinlp}
  Theorem~\ref{Thmlp} implies that, for each of the spaces $E =
  \ell_p$ or $E = L_p[0,1]$, where $p\in(1,\infty)$, $\mathscr{B}(E)$
  is the only finitely-generated left ideal of~$\mathscr{B}(E)$ which
  contains~$\mathscr{K}(E)$, and each finitely-generated, maximal left
  ideal of~$\mathscr{B}(E)$ is fixed. This conclusion is also true for
  $p =1$; indeed, $\ell_1$ is a dual space, and therefore complemented
  in its bi\-dual, while $L_1[0,1]$ is complemented in its bi\-dual
  by~\cite[Theorem~6.3.10]{ak}.

  Many other Banach spaces are known to be complemented in their
  bi\-duals.  The following list gives some examples.
  \begin{romanenumerate}
  \item\label{fgmaxleftidealsinlp1} Let $E$ be a Banach space which is
    isomorphic to a complemented subspace of a dual Banach space; that
    is, for some Banach space~$F$, there are operators
    $U\in\mathscr{B}(E,F^*)$ and $V\in\mathscr{B}(F^*,E)$ with 
    $I_E = VU$. Then the diagram
    \[ \spreaddiagramrows{0ex}\spreaddiagramcolumns{8ex}%
    \xymatrix{%
      E\ar^-{\displaystyle{I_E}}[rr]%
      \ar^-{\displaystyle{\kappa_E}}[rd]%
      \ar_-{\displaystyle{U}}[ddd]
      && E\\ & E^{**}\ar^-{\displaystyle{U^{**}}}[d]\\ &
      F^{***}\ar^-{\displaystyle{\kappa_F^*}}[rd]\\ 
      F^*\ar^-{\displaystyle{\kappa_{F^*}}}[ru]%
      \ar^-{\displaystyle{I_{F^*}}}[rr] &&
      F^*\ar_-{\displaystyle{V}}[uuu]} \] is commutative, which
    implies that the operator $\kappa_EV\kappa_F^*U^{**}$ is a
    projection of~$E^{**}$ on\-to~$\kappa_E(E)$, so that $E$ is
    complemented in its bi\-dual.
  \item As a special case of~\romanref{fgmaxleftidealsinlp1}, suppose
    that~$E$ is a non-zero Banach space for which $\mathscr{B}(E)$ is
    isomorphic to a complemented subspace of a dual Banach
    space. Then, as $\mathscr{B}(E)$ contains a complemented subspace
    isomorphic to~$E$, \romanref{fgmaxleftidealsinlp1}~implies that $E$ is
    complemented in its bi\-dual.

    (It is easy to see that $\mathscr{B}(E)$ contains a complemented
    subspace isomorphic to~$E$. Indeed, 
    choose $\lambda\in E^*$ and $y\in E$ with $\langle
    y,\lambda\rangle = 1$, and consider the operators $U_\lambda\colon
    E\to\mathscr{B}(E)$ and $V_y\colon\mathscr{B}(E)\to E$ given by
    $U_\lambda x = x\otimes\lambda$ for $x\in E$ and $V_y(T) = Ty$
    for $T\in\mathscr{B}(E)$. They satisfy $V_yU_\lambda = I_E$, so
    that~$U_\lambda$ is an isomorphism onto its range, and~$U_\lambda
    V_y$ is a projection of~$\mathscr{B}(E)$ onto the range
    of~$U_\lambda$.) 
  \item\label{fgmaxleftidealsinlp2} Let ~$E$ be a Banach lattice which
    does not contain a subspace isomorphic to~$c_0$. Then~$E$ is
    complemented in its bi\-dual by \cite[Theorem~1.c.4]{lt2}.
  \end{romanenumerate}
\end{example}

\begin{remark}
  Theorem~\ref{Thmlp} does not provide any new information for Banach
  spaces of the form $E = C(K)$, where $K$ is a compact Hausdorff
  space, because the assumption that~$C(K)$ is complemented in its
  bi\-dual implies that~$C(K)$ is injective, so that
  Example~\ref{exampleAutocomplemented}\romanref{exampleAutocomplemented3}
  already applies.
\end{remark}

A slight variation of the proof of Theorem~\ref{Thmlp} gives the
following conclusion.

\begin{theorem}\label{TKNJLoct3Thm1}
  Let $E$ be a non-zero Banach space with a Schauder basis. Then
  $\mathscr{B}(E^*)$ is the only finitely-generated left ideal
  of~$\mathscr{B}(E^*)$ which contains~$\overline{\mathscr{F}(E^*)}$,
  and hence each finitely-generated, maximal left ideal
  of~$\mathscr{B}(E^*)$ is fixed.
\end{theorem}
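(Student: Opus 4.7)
The plan is to follow the strategy of Theorem~\ref{Thmlp}, replacing the basis projections $P_j$ on $E$ with their adjoints $P_j^* \in \mathscr{F}(E^*)$, and exploiting the fact that $E^*$ is already a dual space so that no recourse to a bidual embedding is required. Let $\mathbf{e} = (e_j)_{j\in\mathbb{N}}$ be a Schauder basis for $E$, which we may assume monotone after passing to an equivalent norm. Let $\Gamma$ be a non-empty finite subset of $\mathscr{B}(E^*)$ with $\overline{\mathscr{F}(E^*)}\subseteq\mathscr{L}_{\Gamma}$, and set $n = |\Gamma|$. Since each $P_j^*$ lies in $\mathscr{F}(E^*)\subseteq\mathscr{L}_{\Gamma}$, the quantity
\[ s_j = \inf\bigl\{\|T\| : T\in\mathscr{B}((E^*)^n,E^*),\; P_j^* = T\Psi_{\Gamma}\bigr\} \]
is finite for each $j\in\mathbb{N}$; moreover, the identity $P_j = P_{j+1}P_j$ dualizes to $P_j^* = P_j^* P_{j+1}^*$, from which the now-familiar monotonicity argument gives $s_j\le s_{j+1}$. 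The proof then splits into two parts: (a) showing that $(s_j)_{j\in\mathbb{N}}$ is bounded, and (b) passing to a weak*-limit of operators $T_j$ realizing each $P_j^*$ to obtain $I_{E^*}\in\mathscr{L}_{\Gamma}$, whence $\mathscr{L}_{\Gamma} = \mathscr{B}(E^*)$.

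The main obstacle is part~(a), which requires a dual version of Lemma~\ref{TKTKNJLthm11June2012}. Suppose, for contradiction, that $(s_j)$ is unbounded. Then $\gamma = (s_j^{-1/2})_{j\in\mathbb{N}}$ is decreasing with $\gamma_j\to 0$, so Lemma~\ref{fdposdiagopnorm} produces a compact operator $\Delta_\gamma$ on $E$. In fact $\Delta_\gamma\in\overline{\mathscr{F}(E)}$, because $\Delta_\gamma$ commutes with $P_k$ and the final clause of Lemma~\ref{fdposdiagopnorm} gives
\[ \|\Delta_\gamma - \Delta_\gamma P_k\| = \|(I_E - P_k)\Delta_\gamma\|\le 2\gamma_{k+1}\to 0; \]
hence $\Delta_\gamma^*\in\overline{\mathscr{F}(E^*)}\subseteq\mathscr{L}_{\Gamma}$, so we may write $\Delta_\gamma^* = S\Psi_{\Gamma}$ for some $S\in\mathscr{B}((E^*)^n,E^*)$. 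Because the diagonal operators on $E$ commute, the identity $P_k = \Theta_{\eta(k)}\Delta_\gamma$ from the proof of Lemma~\ref{TKTKNJLthm11June2012} (with $\eta(k) = (s_j^{1/2})_{j=1}^k$) may equally be written $P_k = \Delta_\gamma\Theta_{\eta(k)}$, and taking adjoints yields
\[ P_k^* = \Theta_{\eta(k)}^*\Delta_\gamma^* = (\Theta_{\eta(k)}^*S)\Psi_{\Gamma}. \]
Corollary~\ref{posdiagopinfdim} bounds $\|\Theta_{\eta(k)}^*\| = \|\Theta_{\eta(k)}\|\le 2s_k^{1/2}$, whence $s_k\le 2s_k^{1/2}\|S\|$, forcing $s_k\le 4\|S\|^2$---the desired contradiction.

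Once $(s_j)$ is bounded we may choose $T_j\in\mathscr{B}((E^*)^n,E^*)$ with $P_j^* = T_j\Psi_{\Gamma}$ and $\sup_j\|T_j\|<\infty$. The canonical isometric identification $\mathscr{B}((E^*)^n,E^*)\cong ((E^*)^n\widehat{\otimes}E)^*$ turns closed balls of $\mathscr{B}((E^*)^n,E^*)$ into weak*-compact sets, so $(T_j)$ admits a weak*-accumulation point $T$. For each $\lambda\in E^*$ and $y\in E$, testing against the elementary tensor $\Psi_{\Gamma}\lambda\otimes y$ gives
\[ \langle y, T\Psi_{\Gamma}\lambda\rangle = \lim_{j\to\infty}\langle y, T_j\Psi_{\Gamma}\lambda\rangle = \lim_{j\to\infty}\langle P_jy,\lambda\rangle = \langle y,\lambda\rangle, \]
where the equality of cluster value and ordinary limit is forced by the convergence of the middle sequence. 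Hence $T\Psi_{\Gamma} = I_{E^*}$ and $\mathscr{L}_{\Gamma} = \mathscr{B}(E^*)$. The second assertion is then immediate from Corollary~\ref{June12dich1}: a non-fixed maximal left ideal of $\mathscr{B}(E^*)$ contains $\mathscr{E}(E^*)$ and, being closed in the unital Banach algebra $\mathscr{B}(E^*)$, also $\overline{\mathscr{F}(E^*)}$, so if it were additionally finitely generated the first part would force it to equal $\mathscr{B}(E^*)$, contradicting maximality.
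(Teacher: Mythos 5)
Your proof is correct and takes essentially the same route as the paper: the paper's proof of Theorem~\ref{TKNJLoct3Thm1} is exactly this "dualized" version of Theorem~\ref{Thmlp}, with the boundedness of $(s_j)$ obtained by the same adjoint of the $\Delta_\gamma$/$\Theta_{\eta(k)}$ factorization (the paper leaves this as "arguing as in the proof of Lemma~\ref{TKTKNJLthm11June2012}", which you have filled in correctly), and the conclusion extracted from a weak$^*$-accumulation point of the $T_j$ in $\mathscr{B}((E^*)^n,E^*)$ viewed as the dual of the projective tensor product. Your handling of the accumulation point (a cluster value of a convergent scalar sequence equals its limit) is a slightly tidier packaging of the paper's explicit $\epsilon$-estimate, but not a different argument.
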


\begin{proof} 
  Suppose that
  $\overline{\mathscr{F}(E^*)}\subseteq\mathscr{L}_{\Gamma}$,
  where~$\Gamma$ is a non-empty, finite subset of~$\mathscr{B}(E^*)$,
  and set $n = |\Gamma|\in\N$. As in the proof of Theorem~\ref{Thmlp},
  we may suppose that~$E$ has a monotone Schauder
  basis~$\mathbf{e}$. Then, arguing as in the proof of
  Lemma~\ref{TKTKNJLthm11June2012}, we see that the sequence
  \[ t_j' = \inf\bigl\{\| T\| : T\in\mathscr{B}((E^*)^n,E^*),\, P_j^*
  = T\Psi_{\Gamma}\bigr\}\in(0,\infty) \quad (j\in\N) \] is increasing
  and bounded. (Indeed, if the sequence $(t_j')_{j\in\N}$ were
  unbounded, then for $\gamma = \bigl((t_j')^{-1/2}\bigr)_{j\in\N}$,
  we would have
  $\Delta_\gamma^*\in\overline{\mathscr{F}(E^*)}\setminus
  \mathscr{L}_{\Gamma}$, contrary to our assumption.)  Consequently
  there exists a bounded sequence $(T_j)_{j\in\N}$
  in~$\mathscr{B}((E^*)^n,E^*)$ such that $P_j^* = T_j\Psi_{\Gamma}$
  for each $j\in\N$. Let $T$ be a weak$^*$-accumulation point of
  $(T_j)_{j\in\N}$, where we have identified
  $\mathscr{B}((E^*)^n,E^*)$ with the dual space of the projective
  tensor product $E\widehat{\otimes}(E^*)^n$ via the duality bracket
  given by
  \begin{equation}\label{dualitybracket2} \langle x\otimes\mu, S\rangle = 
    \langle x,S\mu\rangle\quad (x\in E,\, \mu\in (E^*)^n,\,
    S\in\mathscr{B}((E^*)^n,E^*)). \end{equation} For each $x\in
  E$, $\lambda\in E^*$, and $\epsilon >0$, we can find $j_0\in\N$ such
  that $$\| x - P_jx\|\le\epsilon(2\|\lambda\|+1)^{-1}$$ whenever $j\ge
  j_0$. Choosing $j\ge j_0$ such that $\bigl|\langle
  x\otimes\Psi_{\Gamma}\lambda, T- T_j\rangle\bigr|\le\epsilon/2$ and
  applying~\eqref{dualitybracket2}, we then obtain
  \begin{align*}
    \bigl|\langle x,(T\Psi_\Gamma - I_{E^*})\lambda\rangle\bigr| &\le
    \bigl|\langle x,(T\Psi_\Gamma - P_j^*)\lambda\rangle\bigr| +
    \bigl|\langle x,(I_E-P_j)^*\lambda\rangle\bigr|\\ &\le
    \bigl|\langle x, (T - T_j)\Psi_{\Gamma}\lambda\rangle\bigr| +
    \bigl|\langle x-P_jx,\lambda\rangle\bigr|\le\epsilon.
  \end{align*}
This implies that $T\Psi_{\Gamma} = I_{E^*}$, and therefore
$\mathscr{L}_{\Gamma} = \mathscr{B}(E^*)$.
\end{proof}

\begin{example} Theorem~\ref{TKNJLoct3Thm1} applies in the following
  two cases which have not already been resolved:
  \begin{romanenumerate} 
  \item $E = X\widehat{\otimes} X^*$, where $X$ is a Banach space with
    a shrinking Schauder basis (this ensures that $E$ has a Schauder
    basis).  Then~$E^*$ is isomorphic to~$\mathscr{B}(X^*)$, so that
    the conclusion is that each finitely-generated, maximal left ideal
    of $\mathscr{B}(\mathscr{B}(X^*))$ is fixed. The most important
    case is where~$X$, and hence~$X^*$, is a separable,
    infinite-dimensional Hilbert space; in this
    case~$\mathscr{B}(X^*)$ does not have the approximation
    property~\cite{Sz}, which gives this example a very different
    flavour from Examples~\ref{exampleAutocomplemented}
    and~\ref{fgmaxleftidealsinlp}, above.
  \item $E = \bigl(\bigoplus_{n\in\N} E_n\bigr)_{\ell_1}$, where
    $(E_n)_{n\in\N}$ is a sequence of Banach spaces with Schauder
    bases whose basis constants are uniformly bounded. Then~$E^*$ is
    isomorphic to~$\bigl(\bigoplus_{n\in\N}
    E_n^*\bigr)_{\ell_\infty}$, and so the conclusion is that each
    finitely-generated, maximal left ideal of
    $\mathscr{B}\bigl(\bigl(\bigoplus E_n^*\bigr)_{\ell_\infty}\bigr)$
    is fixed.
  \end{romanenumerate}
\end{example}

The conditions imposed on the Banach space~$E$ in Theorems~\ref{Thmlp}
and~\ref{TKNJLoct3Thm1} are clearly preserved under the formation of
finite direct sums. In contrast, this need not be the case for the
condition of Corollary~\ref{noproperfgleftidealcontainsFE}.  For
instance, $c_0$ and $\ell_\infty$ both satisfy this condition by
Exam\-ple~\ref{exampleAutocomplemented}%
\romanref{exampleAutocomplemented3}%
--\romanref{exampleAutocomplementedc0}, whereas their direct sum
$c_0\oplus\ell_\infty$ does not.  We shall explore this situation in
greater depth in Section~\ref{nonfixedmaxleftideal}. Notably, as a
particular instance of Theorem~\ref{leftidealgenbyL}, we shall see
that the main conclusion of
Corollary~\ref{noproperfgleftidealcontainsFE} fails for $E =
c_0\oplus\ell_\infty$ because $\mathscr{F}(c_0\oplus\ell_\infty)$
\textsl{is} contained in a proper, closed, singly-generated left ideal
of~$\mathscr{B}(c_0\oplus\ell_\infty)$. 

We do not know the answer to Question~\romanref{DalesQ1} for $E =
c_0\oplus\ell_\infty$, but the following result answers this question
positively for another direct sum arising naturally from
Example~\ref{exampleAutocomplemented}, with the ideal~$\mathscr{S}(E)$
of strictly singular operators taking the role that was played
by~$\mathscr{F}(E)$ in Corollary~\ref{noproperfgleftidealcontainsFE}
and~$\mathscr{K}(E)$ in Theorem~\ref{Thmlp}.

\begin{proposition}\label{c0plusHilbert}
  Let $E = c_0(\mathbb{I})\oplus H$, where $\mathbb{I}$ is a non-empty
  set and $H$ is a Hilbert space.  Then $\mathscr{B}(E)$ is the only
  finitely-generated left ideal of $\mathscr{B}(E)$ which
  contains~$\mathscr{S}(E)$, and hence each finitely-generated,
  maximal left ideal of~$\mathscr{B}(E)$ is fixed.
\end{proposition}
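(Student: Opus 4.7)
The plan is to begin by reducing the second claim to the first. By Corollary~\ref{June12dich1}, every non-fixed maximal left ideal of~$\mathscr{B}(E)$ contains~$\mathscr{E}(E)$, and hence~$\mathscr{S}(E)$; so it suffices to show that the only finitely-generated left ideal of~$\mathscr{B}(E)$ which contains~$\mathscr{S}(E)$ is~$\mathscr{B}(E)$ itself. Let~$\Gamma$ be a non-empty, finite subset of~$\mathscr{B}(E)$ with $\mathscr{S}(E)\subseteq\mathscr{L}_\Gamma$. If~$\mathbb{I}$ is finite or~$H$ is finite-dimensional, then $E$ is finite-dimensional or isomorphic to a Hilbert space or to~$c_0(\mathbb{I})$, and the conclusion already follows from Example~\ref{exampleAutocomplemented} together with Corollary~\ref{noproperfgleftidealcontainsFE} (or Remark~\ref{remarkFinitedimCase}\romanref{remarkFinitedimCaseB}). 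Assume henceforth that~$\mathbb{I}$ is infinite and~$H$ is infinite-dimensional, so that $E\cong E\oplus E$; Proposition~\ref{cartesianSGnew} then allows us to write $\mathscr{L}_\Gamma = \mathscr{L}_{\{T\}}$ for a single operator~$T\in\mathscr{B}(E)$, which by Corollary~\ref{Sept2011keylemma} is bounded below.

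The next step is to exploit the operator-matrix decomposition of~$T$ into components $A\in\mathscr{B}(c_0(\mathbb{I}))$, $B\in\mathscr{B}(H,c_0(\mathbb{I}))$, $C\in\mathscr{B}(c_0(\mathbb{I}),H)$, and $D\in\mathscr{B}(H)$ with respect to $E = c_0(\mathbb{I})\oplus H$. Since no infinite-dimensional Hilbert subspace embeds isomorphically into~$c_0(\mathbb{I})$, and vice versa (a classical consequence of the incompatibility of the unit vector bases of~$\ell_2$ and~$c_0$ via the Bessaga--Pe{\l}czy{\'n}ski selection principle), the cross-terms $B$ and $C$ are strictly singular, and hence so is the operator $K := T - T_0$ on~$E$, where $T_0 := A\oplus D$. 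Kato's stability theorem for upper semi-Fredholm operators under strictly singular perturbations implies that $T_0 = T - K$ is itself upper semi-Fredholm, so~$A$ and~$D$ are individually upper semi-Fredholm. Their kernels are finite-dimensional (hence complemented), and their ranges are closed subspaces isomorphic to~$c_0(\mathbb{I})$ and~$H$, respectively; the former is complemented in~$c_0(\mathbb{I})$ by Example~\ref{exampleAutocomplemented}\romanref{exampleAutocomplementedc0}, and the latter is complemented in~$H$ automatically. This produces operators $A^+\in\mathscr{B}(c_0(\mathbb{I}))$ and $D^+\in\mathscr{B}(H)$ with $A^+A = I_{c_0(\mathbb{I})} - \pi_A$ and $D^+D = I_H - \pi_D$ for suitable finite-rank projections $\pi_A$ and $\pi_D$.

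Setting $S_0 := A^+\oplus D^+$ and $F_0 := \pi_A\oplus\pi_D$, one obtains $S_0 T_0 = I_E - F_0$, which rearranges to
\[ I_E = S_0 T + F_0 - S_0 K. \]
Since $\mathscr{S}(E)$ is a two-sided ideal, $S_0 K\in\mathscr{S}(E)$; and $F_0\in\mathscr{F}(E)\subseteq\mathscr{S}(E)$. Both terms therefore lie in $\mathscr{L}_{\{T\}}$ by hypothesis, while $S_0 T\in\mathscr{L}_{\{T\}}$ trivially, and so $I_E\in\mathscr{L}_{\{T\}}$, giving $\mathscr{L}_{\{T\}}=\mathscr{B}(E)$, as required. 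The main obstacle is establishing the strict singularity of~$B$ and~$C$; once this is in place, the remainder of the argument is a Fredholm-theoretic calculation supported by the complementation properties of~$c_0(\mathbb{I})$ recorded in Example~\ref{exampleAutocomplemented}.
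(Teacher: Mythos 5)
Your proof is correct and follows essentially the same route as the paper's: reduction to a single generator via Proposition~\ref{cartesianSGnew}, boundedness below from Corollary~\ref{Sept2011keylemma}, strict singularity of the off-diagonal entries, the upper semi-Fredholm perturbation argument for the diagonal entries, and the construction of a left inverse modulo $\mathscr{S}(E)$ using the complementation properties of $c_0(\mathbb{I})$ and $H$. The only cosmetic differences are that you spell out the degenerate cases and package the final identity as $I_E = S_0T + F_0 - S_0K$ rather than writing the correction term as an explicit matrix.
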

\begin{proof} Let $\mathscr{L}$ be a finitely-generated left ideal of
  $\mathscr{B}(E)$ such that~$\mathscr{S}(E)$ is contained
  in~$\mathscr{L}$. We may suppose that $\mathbb{I}$ is infinite and
  $H$ is infinite-dimensional.  Proposition~\ref{cartesianSGnew}
  implies that~$\mathscr{L}$ is generated by a single operator
  \mbox{$T\in\mathscr{B}(E)$}, say, while
  Corollary~\ref{Sept2011keylemma} shows that $T$ is bounded below and
  thus is an upper semi-Fredholm operator.

  We can represent $T$ as a matrix of operators:
  \[ T = \begin{pmatrix} T_{1,1}\colon c_0(\mathbb{I})\to
    c_0(\mathbb{I}) & T_{1,2}\colon H\to c_0(\mathbb{I})\\
    T_{2,1}\colon c_0(\mathbb{I})\to H & T_{2,2}\colon H\to
    H \end{pmatrix}. \] Each operator from~$H$ to $c_0(\mathbb{I})$ is
  strictly singular because no infinite-dimen\-sional subspace
  of~$c_0(\mathbb{I})$ is isomorphic to a Hilbert space. Similarly,
  each operator from~$c_0(\mathbb{I})$ to~$H$ is strictly
  singular. Hence, by \cite[Proposition~2.c.10]{lt1},
  \[ T - \begin{pmatrix} 0 & T_{1,2}\\ T_{2,1} & 0 \end{pmatrix}
  = \begin{pmatrix} T_{1,1} & 0\\ 0& T_{2,2} \end{pmatrix} \] is an
  upper semi-Fredholm operator, which clearly implies that~$T_{1,1}$
  and~$T_{2,2}$ are upper semi-Fredholm operators. Let
  $P_1\in\mathscr{F}(c_0(\mathbb{I}))$ and
  \mbox{$P_2\in\mathscr{F}(H)$} be projections onto the kernels
  of~$T_{1,1}$ and~$T_{2,2}$,
  respectively. Then \[\widetilde{T}_{1,1}\colon\ x\mapsto
  T_{1,1}x,\quad \ker P_1\to T_{1,1}(c_0(\mathbb{I})), \] is an
  isomorphism, so that $T_{1,1}(c_0(\mathbb{I}))$ is isomorphic
  to~$\ker P_1$, which in turn is isomorphic to~$c_0(\mathbb{I})$
  (because~$\ker P_1$ has finite codimension
  in~$c_0(\mathbb{I})$). Consequently, as in
  Example~\ref{exampleAutocomplemented}%
  \romanref{exampleAutocomplementedc0}, $T_{1,1}(c_0(\mathbb{I}))$ is
  complemented in~$c_0(\mathbb{I})$, so that we can extend the inverse
  of~$\widetilde{T}_{1,1}$ to obtain an operator
  $S_1\in\mathscr{B}(c_0(\mathbb{I}))$ which satisfies $S_1T_{1,1} =
  I_{c_0(\mathbb{I})} - P_1$. Similarly, we can find an operator
  $S_2\in\mathscr{B}(H)$ such that $S_2T_{2,2} = I_H - P_2$. In
  conclusion, we have
  \[ I_E = 
  \begin{pmatrix} S_1 & 0\\ 0 & S_2 \end{pmatrix} T +
  \begin{pmatrix} P_1 & -S_1T_{1,2}\\ -S_2T_{2,1} &
    P_2 \end{pmatrix}\in\mathscr{L} + \mathscr{S}(E) = \mathscr{L}, \]
  and thus $\mathscr{L} = \mathscr{B}(E)$.  

  Since $\mathscr{S}(E)\subseteq\mathscr{E}(E)$, the final clause
  follows immediately from Corollary~\ref{June12dich1}.
\end{proof}

\section{`Exotic' Banach spaces for which each finitely-generated,
  maximal left ideal is fixed}\label{newSection6}
\noindent 
In this section, we shall answer Question~\romanref{DalesQ1}
positively for two classes of custom-made Banach spaces of a
distinctly non-classical nature, using an approach which is completely
different from the one taken in
Section~\ref{sectionAllfgidealsfixed}. More precisely, for each Banach
space~$E$ in either of these two classes, we are able to describe all
the maximal left ideals of~$\mathscr{B}(E)$ explicitly, and it will
then follow easily that only the fixed maximal left ideals are
finitely generated. The reason that we can describe all the maximal
left ideals of~$\mathscr{B}(E)$ is, roughly speaking,
that~$\mathscr{B}(E)$ is `small'. As we shall see, in both cases each
non-fixed, maximal left ideal of~$\mathscr{B}(E)$ is a two-sided ideal
of codimension~one.

We begin with a lemma which can be viewed as a counter-part of
Corollary~\ref{propWCopfg} for left ideals of strictly singular
operators.

\begin{lemma}\label{lemma04042012}
  Let~$E$ be a Banach space, and let~$\mathscr{L}$ be a left ideal
  of~$\mathscr{B}(E)$ such that
  $\mathscr{F}(E)\subseteq\mathscr{L}\subseteq\mathscr{S}(E)$. Then
  the following three conditions are equivalent:
  \begin{alphenumerate}
  \item\label{lemma04042012i} $\mathscr{L}$ is finitely generated;
  \item\label{lemma04042012ii} $\mathscr{L}=\mathscr{B}(E);$
  \item\label{lemma04042012iii} $E$ is finite-dimensional.
\end{alphenumerate}
\end{lemma}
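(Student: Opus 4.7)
The plan is to establish the cycle \alphref{lemma04042012i}$\Rightarrow$\alphref{lemma04042012iii}$\Rightarrow$\alphref{lemma04042012ii}$\Rightarrow$\alphref{lemma04042012i}. The last two implications are essentially bookkeeping: if $E$ is finite-dimensional then $\mathscr{F}(E)=\mathscr{B}(E)$, so the sandwiching $\mathscr{F}(E)\subseteq\mathscr{L}\subseteq\mathscr{S}(E)$ forces $\mathscr{L}=\mathscr{B}(E)$; and obviously $\mathscr{B}(E)$ itself is singly generated as a left ideal by~$I_E$.

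The substantive implication is \alphref{lemma04042012i}$\Rightarrow$\alphref{lemma04042012iii}. Assume $\mathscr{L}=\mathscr{L}_\Gamma$ for some non-empty, finite subset~$\Gamma=\{T_1,\ldots,T_n\}\subseteq\mathscr{B}(E)$. Since $\mathscr{F}(E)\subseteq\mathscr{L}_\Gamma$, Corollary~\ref{Sept2011keylemma} applies and yields that the operator
\[ \Psi_\Gamma = \sum_{j=1}^n \iota_j T_j\in\mathscr{B}(E,E^n) \]
defined in~\eqref{defnOpPsi} is bounded below. On the other hand, each generator $T_j$ lies in $\mathscr{L}\subseteq\mathscr{S}(E)$, and so each summand $\iota_j T_j$ is strictly singular because~$\mathscr{S}$ is an operator ideal. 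Hence $\Psi_\Gamma$, as a finite sum of strictly singular operators, is itself strictly singular from~$E$ to~$E^n$.

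The contradiction (if~$E$ were infinite-dimensional) is then immediate from the definitions: a strictly singular operator cannot be bounded below on any infinite-dimensional subspace, yet $\Psi_\Gamma$ is bounded below on the whole of its domain~$E$. Therefore~$E$ must be finite-dimensional, proving~\alphref{lemma04042012iii}. I do not anticipate any genuine obstacle here: once one recognises that Corollary~\ref{Sept2011keylemma} delivers an operator which is simultaneously bounded below and, by construction, strictly singular, the argument essentially writes itself.
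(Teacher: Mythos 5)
Your proposal is correct and follows the paper's own argument exactly: Corollary~\ref{Sept2011keylemma} gives that $\Psi_\Gamma$ is bounded below, while $\Gamma\subseteq\mathscr{L}\subseteq\mathscr{S}(E)$ and~\eqref{defnOpPsi} make $\Psi_\Gamma$ strictly singular, forcing $\dim E<\infty$. The remaining implications are the same routine observations the paper dismisses as clear.
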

\begin{proof} 
The implications
\alphref{lemma04042012iii}$\Rightarrow$\alphref{lemma04042012ii}%
$\Rightarrow$\alphref{lemma04042012i} are clear.
 
To see that~\alphref{lemma04042012i}
implies~\alphref{lemma04042012iii}, suppose that $\mathscr{L} =
\mathscr{L}_{\Gamma}$ for some non-empty, finite subset $\Gamma$
of~$\mathscr{B}(E)$. Corollary~\ref{Sept2011keylemma} implies that the
operator~$\Psi_{\Gamma}$ is bounded below, while~\eqref{defnOpPsi} and
the fact that $\Gamma\subseteq\mathscr{S}(E)$ show
that~$\Psi_{\Gamma}$ is strictly singular. Hence the do\-main~$E$
of~$\Psi_{\Gamma}$ is finite-dimensional.
\end{proof} 

A Banach space $E$ has \emph{few operators} if $E$ is
infinite-dimensional and each operator on~$E$ is the sum of a scalar
multiple of the identity operator and a strictly singular operator;
that is, $\mathscr{B}(E) = \mathbb{C}I_E + \mathscr{S}(E)$. Gowers and
Maurey~\cite{gm} showed that each hereditarily in\-de\-composable
Banach space has few operators, and constructed the first example of
such a space.

\begin{theorem}\label{fewopsimpliesfixed}
  Let $E$ be a Banach space which has few operators.  Then
  $\mathscr{S}(E)$ is the unique non-fixed, maximal left ideal
  of~$\mathscr{B}(E)$, and $\mathscr{S}(E)$ is not finitely generated
  as a left ideal.
\end{theorem}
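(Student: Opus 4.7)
The plan proceeds in three short steps, each relying on a result already established in the excerpt.

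First, I would show that $\mathscr{S}(E)$ is itself a non-fixed, maximal left ideal. Since $E$ is infinite-dimensional, $I_E \notin \mathscr{S}(E)$, so $\mathscr{S}(E)$ is a proper two-sided ideal. The hypothesis $\mathscr{B}(E) = \mathbb{C}I_E + \mathscr{S}(E)$ then forces $\mathscr{S}(E)$ to have codimension one, which makes it a maximal two-sided ideal and \emph{a fortiori} a maximal left ideal. As $\mathscr{F}(E) \subseteq \mathscr{S}(E)$, the ideal $\mathscr{S}(E)$ satisfies alternative~\romanref{dichotomythm2} of Theorem~\ref{dichotomythm}, and in particular it cannot be fixed, since no fixed maximal left ideal contains $\mathscr{F}(E)$.

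Second, I would establish uniqueness. Let $\mathscr{L}$ be any non-fixed, maximal left ideal of $\mathscr{B}(E)$. By the Strong Dichotomy (Corollary~\ref{June12dich1}), $\mathscr{E}(E) \subseteq \mathscr{L}$, and from the standard inclusions among operator ideals listed in Section~\ref{SectPrelim} we have $\mathscr{S}(E) \subseteq \mathscr{E}(E) \subseteq \mathscr{L}$. Since $\mathscr{L}$ is proper and $\mathscr{S}(E)$ is already maximal, this forces $\mathscr{L} = \mathscr{S}(E)$.

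Third, for the final assertion I would simply invoke Lemma~\ref{lemma04042012} with $\mathscr{L} = \mathscr{S}(E)$: the inclusions $\mathscr{F}(E) \subseteq \mathscr{S}(E) \subseteq \mathscr{S}(E)$ hold trivially, and $\mathscr{S}(E) \neq \mathscr{B}(E)$ since $E$ is infinite-dimensional, so the lemma yields that $\mathscr{S}(E)$ is not finitely generated as a left ideal.

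There is really no substantial obstacle here once the machinery of Sections~\ref{sectionLcontainsFE} is in place; the only point that requires a moment's care is threading the chain $\mathscr{S}(E) \subseteq \mathscr{E}(E)$ into the Strong Dichotomy so as to pin down $\mathscr{L}$ exactly, rather than merely sandwiching it between $\mathscr{E}(E)$ and $\mathscr{B}(E)$. Everything else is an immediate consequence of the hypothesis that $E$ has few operators.
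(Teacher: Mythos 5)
Your proposal is correct and follows essentially the same route as the paper: the paper also deduces from Corollary~\ref{June12dich1} that any non-fixed maximal left ideal contains $\mathscr{E}(E)\supseteq\mathscr{S}(E)$, uses the codimension-one property of $\mathscr{S}(E)$ to pin it down, and invokes Lemma~\ref{lemma04042012} for the final clause. Your version merely makes the existence step (that $\mathscr{S}(E)$ is itself a non-fixed maximal left ideal) more explicit than the paper does.
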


\begin{proof} Let $\mathscr{L}$ be a maximal left ideal
  of~$\mathscr{B}(E)$, and suppose that $\mathscr{L}$ is not fixed.
  Then, by Corollary~\ref{June12dich1}, $\mathscr{L}$
  contains~$\mathscr{E}(E)$ and hence~$\mathscr{S}(E)$, which has
  codimension one in~$\mathscr{B}(E)$, so that $\mathscr{L} =
  \mathscr{E}(E) = \mathscr{S}(E)$. This proves the first clause. The
  second clause follows from Lemma~\ref{lemma04042012}.
\end{proof}

To set the scene for our second result, we begin with a short
excursion into the theory of semi-direct products of Banach algebras.
Let $\mathscr{B}$ be a Banach algebra, and let~$\mathscr{C}$
and~$\mathscr{I}$ be a closed subalgebra and a closed, two-sided ideal
of~$\mathscr{B}$, respectively. Then $\mathscr{B}$ is the
\emph{semi-direct product} of $\mathscr{C}$ and~$\mathscr{I}$ if
$\mathscr{C}$ and~$\mathscr{I}$ are complementary subspaces of
$\mathscr{B}$; that is, $\mathscr{C} + \mathscr{I} = \mathscr{B}$ and
$\mathscr{C}\cap\mathscr{I} = \{0\}$. In this case, we denote by
$\rho\colon\mathscr{B}\to\mathscr{C}$ the projection of~$\mathscr{B}$
onto~$\mathscr{C}$ along~$\mathscr{I}$. This is an algebra
homomorphism, as is easy to check.  It is relevant for our purposes
because it induces an isomorphism between the following two lattices
of closed left ideals
\begin{equation}\label{ideallatticeBI}
  \operatorname{Lat}_{\mathscr{I}}(\mathscr{B}) = \bigl\{ \mathscr{L}
  : \mathscr{L}\ \text{is a closed left ideal of}\ \mathscr{B}\
  \text{such that}\
  \mathscr{I}\subseteq\mathscr{L}\bigr\}
\end{equation}
and
\begin{equation}\label{ideallatticeC}
 \operatorname{Lat}(\mathscr{C}) = \bigl\{
  \mathscr{N} : \mathscr{N}\ \text{is a closed left ideal of}\
  \mathscr{C}\bigr\}.
\end{equation}
More precisely, for each
$\mathscr{L}\in\operatorname{Lat}_{\mathscr{I}}(\mathscr{B})$, we have
$\rho(\mathscr{L}) =
\mathscr{L}\cap\mathscr{C}\in\operatorname{Lat}(\mathscr{C})$, and the
mapping \mbox{$\mathscr{L}\mapsto\rho(\mathscr{L})$} is a lattice
isomorphism between  $\operatorname{Lat}_{\mathscr{I}}(\mathscr{B})$
and~$\operatorname{Lat}(\mathscr{C})$; its inverse is given by
\mbox{$\mathscr{N}\mapsto \mathscr{N}+\mathscr{I}$}.  Suppose that the
left ideal
$\mathscr{L}\in\operatorname{Lat}_{\mathscr{I}}(\mathscr{B})$ is
generated by a subset~$\Gamma$ of~$\mathscr{B}$. Then evidently
$\rho(\mathscr{L})$ is generated by the subset~$\rho(\Gamma)$
of~$\mathscr{C}$, so that~$\rho$ maps each closed, finitely-generated
left ideal of~$\mathscr{B}$ containing~$\mathscr{I}$ to a closed,
finitely-generated left ideal of~$\mathscr{C}$.
 
We shall next state two classical results about $C(K)$-spaces. The
first is due to Pe{\l}czy{\a'n}\-ski \cite[Theorem~1]{pel}, and
characterizes the weakly compact operators from a $C(K)$-space into an
arbitrary Banach space.
\begin{theorem}\label{PelcWCK}
  Let~$K$ be a non-empty, compact Hausdorff space, and let $E$ be a Banach
  space. Then the following three conditions are equivalent for each
  operator $T\in\mathscr{B}(C(K),E)\colon$
  \begin{alphenumerate}
  \item $T$ is weakly compact;
  \item $T$ is strictly singular;
  \item $T$ does not fix a copy of~$c_0$.
  \end{alphenumerate}
\end{theorem}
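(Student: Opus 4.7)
The three conditions form a cycle. The implication (b) $\Rightarrow$ (c) is immediate from the definitions: a strictly singular operator is bounded below on no infinite-dimensional subspace, so it cannot be an isomorphism on any subspace isomorphic to $c_0$. My plan is thus to establish (a) $\Rightarrow$ (b) and (c) $\Rightarrow$ (a).

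For (a) $\Rightarrow$ (b), the key ingredient is the Dunford--Pettis property of $C(K)$: every weakly compact operator from $C(K)$ into a Banach space is completely continuous, i.e., sends weakly null sequences to norm null ones. Suppose for contradiction that $T$ is weakly compact yet $T|_X$ is bounded below on some infinite-dimensional closed subspace $X\subseteq C(K)$. Then $T|_X$ is an isomorphism onto its closed range and hence a weak-to-weak homeomorphism from the closed unit ball of~$X$ onto a subset of the image of the closed unit ball of~$C(K)$, which has weakly compact closure by weak compactness of~$T$; consequently the closed unit ball of~$X$ is weakly relatively compact, so $X$ is reflexive. Any infinite-dimensional reflexive space contains a weakly null sequence of unit vectors, which $T$ must send to a norm null sequence by complete continuity---contradicting that $T|_X$ is bounded below.

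For (c) $\Rightarrow$ (a), I argue the contrapositive: if $T$ fails to be weakly compact, then $T$ fixes a copy of~$c_0$. By Gantmacher's theorem, the adjoint $T^*\colon E^*\to M(K)=C(K)^*$ is not weakly compact either. The Dieudonn\'{e}--Grothendieck characterization of weakly compact subsets of $M(K)$ then supplies $\epsilon>0$, a sequence $(y_n^*)$ in the closed unit ball of $E^*$, and a pairwise disjoint sequence of Borel sets $(B_n)$ in $K$ with $|T^*y_n^*|(B_n)\geq\epsilon$ for each $n\in\N$. Using regularity of the measures $T^*y_n^*$ together with Urysohn's lemma, I would construct continuous functions $f_n\in C(K)$ with $\|f_n\|_\infty\leq 1$, with approximately pairwise disjoint supports, and satisfying $|\langle Tf_n,y_n^*\rangle|=|\langle f_n,T^*y_n^*\rangle|\geq\epsilon/2$. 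An approximately disjointly supported normalized sequence in $C(K)$ is equivalent to the canonical $c_0$-basis, so $U\colon(\alpha_n)\mapsto\sum_n\alpha_n f_n$ embeds $c_0$ isomorphically into $C(K)$.

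The main obstacle is to ensure that $(Tf_n)$ is also equivalent to the $c_0$-basis in~$E$, so that $T\circ U$ is an isomorphism and $T$ genuinely fixes a copy of~$c_0$. The upper $c_0$-estimate for $(Tf_n)$ is automatic from boundedness of~$T$ and the $c_0$-equivalence of $(f_n)$. For the lower estimate, a careful diagonal / gliding-hump extraction is needed so that, after passing to subsequences, the functionals $y_n^*$ act nearly biorthogonally on~$(Tf_m)_m$: specifically, one arranges $\sum_{m\neq n}|\langle Tf_m,y_n^*\rangle|$ to be uniformly small, which then yields a lower bound of order $\epsilon\sup_n|\alpha_n|$ for $\bigl\|\sum_n\alpha_n Tf_n\bigr\|$. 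This measure-theoretic stabilization---exploiting regularity of Borel measures on $K$ together with the failure of uniform countable additivity---is where the real work of the theorem lies.
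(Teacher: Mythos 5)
The paper does not prove this statement at all: it is quoted as a classical theorem of Pe{\l}czy\'{n}ski, with a bare citation to \cite[Theorem~1]{pel}, so there is no internal proof to compare yours against. Judged on its own terms, your outline is the standard argument and is essentially sound. The implication (b)$\Rightarrow$(c) is indeed trivial; your (a)$\Rightarrow$(b) correctly combines two facts, namely that a weakly compact operator bounded below on an infinite-dimensional subspace $X$ forces $X$ to be reflexive (since $T(B_X)$ is convex, norm-closed and relatively weakly compact, hence weakly compact, and $(T|_X)^{-1}$ is weak-to-weak continuous), and that the Dunford--Pettis property of $C(K)$ then kills a normalized weakly null sequence in $X$; note that neither fact alone suffices, since complete continuity does not preclude boundedness below on an $\ell_1$-subspace. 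For (c)$\Rightarrow$(a) your route via Gantmacher and the Dieudonn\'{e}--Grothendieck criterion is the classical one, and the one step you leave unexecuted --- arranging $\sum_{m\neq n}\bigl|\langle Tf_m,y_n^*\rangle\bigr|$ to be uniformly small --- is exactly Rosenthal's disjointification lemma applied to the measures $\mu_n=|T^*y_n^*|$ and the disjoint open sets $O_n$: after passing to a subsequence one has $\mu_n\bigl(\bigcup_{m\neq n}O_m\bigr)<\epsilon/4$, and since each $f_m$ is (up to a summable error) supported in $O_m$, the near-biorthogonality and hence the lower $c_0$-estimate for $(Tf_n)$ follow as you describe. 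So the proposal is correct modulo an explicit appeal to Rosenthal's lemma at that point; naming it would close the only real gap.
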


The second result describes the maximal ideals of the Banach
algebra~$C(K)$, as well as the finitely-generated ones. (Note that the
notions of a left, right, and two-sided ideal coincide in $C(K)$
because $C(K)$ is commutative.)  Given a point $k\in K$, we write
$\epsilon_k\colon C(K)\to\C$ for the evaluation map at~$k$; that is,
$\epsilon_k(f)= f(k)$ for each $f\in C(K)$. This is a surjective
algebra homomorphism of norm one.

\begin{theorem}\label{maxidealsofCK}
  Let~$K$ be a compact Hausdorff space. Then:
  \begin{romanenumerate}
  \item\label{maxidealsofCK1} each maximal ideal of~$C(K)$ has the
    form $\ker\epsilon_k$ for a unique point $k\in K;$
  \item\label{maxidealsofCK2} the maximal ideal $\ker\epsilon_k$ is
    finitely generated if and only if the point~$k$ is isolated
    in~$K$.
  \end{romanenumerate}
\end{theorem}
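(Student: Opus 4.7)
My plan for part~\romanref{maxidealsofCK1} is to show, via a compactness argument, that every maximal ideal~$\mathscr{M}$ of~$C(K)$ has a common zero. Setting $Z(\mathscr{M}) = \bigcap_{f\in\mathscr{M}}f^{-1}(\{0\})$, I would argue by contradiction: were $Z(\mathscr{M}) = \emptyset$, then for each $k\in K$ there would be $f_k\in\mathscr{M}$ with $f_k(k)\neq 0$, yielding an open cover $\{t\in K: f_k(t)\neq 0\}_{k\in K}$ of~$K$; extracting a finite subcover $f_{k_1},\ldots,f_{k_n}$ would make $g = \sum_{j=1}^n\overline{f_{k_j}}f_{k_j}\in\mathscr{M}$ strictly positive on~$K$ and hence invertible in~$C(K)$, contradicting properness. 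Any $k\in Z(\mathscr{M})$ then satisfies $\mathscr{M}\subseteq\ker\epsilon_k$, with maximality forcing equality; uniqueness of~$k$ follows from Urysohn's lemma, which separates any two distinct points of~$K$ by a continuous function.

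For the easy direction of~\romanref{maxidealsofCK2}, if $k\in K$ is isolated, then the characteristic function~$\chi_{\{k\}}$ is continuous, so the idempotent $p := 1 - \chi_{\{k\}}$ lies in~$\ker\epsilon_k$; for any $f\in\ker\epsilon_k$ we have $f\chi_{\{k\}}=0$, hence $f=fp$, showing that $\ker\epsilon_k$ is singly generated by~$p$.

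For the harder converse, suppose $\ker\epsilon_k$ is generated by $f_1,\ldots,f_n\in C(K)$. I would note that $\ker\epsilon_k$, being the kernel of a continuous functional, is closed in~$C(K)$ and hence a Banach space, so the open mapping theorem applies to the continuous surjection
\[
(g_1,\ldots,g_n)\mapsto \sum_{j=1}^n g_jf_j,\quad C(K)^n\to\ker\epsilon_k,
\]
yielding a constant $M>0$ such that every $f\in\ker\epsilon_k$ admits a representation with $\max_j\|g_j\|_\infty\le M\|f\|_\infty$. Given any $t\in K\setminus\{k\}$, Urysohn's lemma supplies $h\in C(K)$ with $h(k) = 0$, $h(t) = 1$, and $\|h\|_\infty = 1$; evaluating the corresponding decomposition at~$t$ gives
\[
1 = |h(t)|\le \sum_{j=1}^n|g_j(t)||f_j(t)|\le nM\max_j|f_j(t)|,
\]
so that $\max_j|f_j(t)|\ge 1/(nM)$ for every $t\ne k$. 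Since the continuous function $t\mapsto\max_j|f_j(t)|$ vanishes at~$k$, no sequence $t_m\to k$ with $t_m\neq k$ can exist, which forces $k$ to be isolated.

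The main obstacle is the converse direction of~\romanref{maxidealsofCK2}: the crux is to convert the purely algebraic finite-generation hypothesis into the uniform quantitative bound provided by the open mapping theorem, for which one needs to recognize $\ker\epsilon_k$ as a Banach space in its own right. Once this quantitative handle is in place, Urysohn separation and continuity of the generators at~$k$ combine routinely to close the argument.
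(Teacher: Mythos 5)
Your argument is essentially correct, and it is worth noting that the paper does not actually write out a proof of this theorem: it cites \cite[Theorem~4.2.1(i)]{dales} for clause~\romanref{maxidealsofCK1} and Gillman's classical (real-valued, ring-theoretic) result \cite[Corollary~5.4]{gillman} for clause~\romanref{maxidealsofCK2}, remarking only that both clauses are ``easy to verify directly.'' Your proposal supplies exactly such a direct verification. The compactness argument for~\romanref{maxidealsofCK1} is the standard one and is fine (note that $\overline{f_{k_j}}f_{k_j}\in\mathscr{M}$ because $\mathscr{M}$ is an ideal and $\overline{f_{k_j}}\in C(K)$, so positivity of $g$ does contradict properness), as is the easy direction of~\romanref{maxidealsofCK2}. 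For the converse your key move --- recognizing $\ker\epsilon_k$ as a Banach space and using the open mapping theorem on $(g_1,\ldots,g_n)\mapsto\sum_j g_jf_j$ to get a uniform bound $\max_j\|g_j\|_\infty\le M\|f\|_\infty$ --- is a clean functional-analytic substitute for Gillman's purely ring-theoretic argument, and the resulting lower bound $\max_j|f_j(t)|\ge 1/(nM)$ for all $t\ne k$ is exactly what is needed.

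One small repair is required in the last sentence. In a general compact Hausdorff space $K$, a point can fail to be isolated without being the limit of any sequence from $K\setminus\{k\}$ (consider points of $\beta\N\setminus\N$), so ``no sequence $t_m\to k$ with $t_m\ne k$ exists'' does not by itself force $k$ to be isolated. But your quantitative bound makes this immaterial: the function $F\colon t\mapsto\max_j|f_j(t)|$ is continuous with $F(k)=0$, so there is an open neighbourhood $U$ of $k$ on which $F<1/(nM)$; since $F\ge 1/(nM)$ on $K\setminus\{k\}$, this gives $U=\{k\}$, and $k$ is isolated. With that one-line adjustment the proof is complete.
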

\begin{proof}
  The first clause is folklore (\emph{e.g.}, see
  \cite[Theorem~4.2.1(i)]{dales}), while the second is the
  complex-valued counter-part of a classical theorem of Gillman
  \cite[Corollary~5.4]{gillman}. Both clauses are also easy to verify
  directly.
\end{proof}

We require one further notion before we can present our result.  For a
non-empty, compact Hausdorff space~$K$ and a function $g\in C(K)$, we denote by
$M_g\in\mathscr{B}(C(K))$ the multiplication operator given by~$g$;
that is, $M_gf = gf$ for each \mbox{$f\in C(K)$}. The mapping
\begin{equation}\label{defnMU}
  \mu\colon\ g\mapsto M_g,\quad C(K)\to\mathscr{B}(C(K)), 
\end{equation}
is an isometric, unital algebra homomorphism. An operator
$T\in\mathscr{B}(C(K))$ is a \emph{weak multi\-pli\-ca\-tion} if it
has the form \mbox{$T = M_g + S$} for some $g\in C(K)$ and
$S\in\mathscr{W}(C(K))$. The fourth-named
author \cite[Theorem~6.1]{koszmider} (assuming the continuum
hypothesis) and Ple\-ba\-nek \cite[Theorem~1.3]{plebanek} (without any
assumptions beyond ZFC) have constructed an~example of a connected,
compact Hausdorff space~$K$ for which each operator on~$C(K)$ is a
weak multiplication. This ensures that the following theorem is not
vacuous.

\begin{theorem}\label{CKweakmult}
  Let~$K$ be a compact Hausdorff space without isolated points and
  such that each operator on~$C(K)$ is a weak multiplication. 
  \begin{romanenumerate}
  \item\label{CKweakmult1} The Banach algebra $\mathscr{B}(C(K))$ is
    the semi-direct product of the sub\-algebra $\mu(C(K))$ and the
    ideal~$\mathscr{W}(C(K))$, where~$\mu$ is the homomorphism given
    by~\eqref{defnMU}. 
  \item\label{CKweakmult2} Let $\mathscr{L}$ be a subset
    of~$\mathscr{B}(C(K))$.  Then the following four conditions are
    equivalent:
    \begin{alphenumerate}
    \item\label{CKweakmult2b} $\mathscr{L}$ is a non-fixed, maximal
      left ideal of~$\mathscr{B}(C(K));$
    \item\label{CKweakmult2a} $\mathscr{L}$ is a maximal left ideal
      of~$\mathscr{B}(C(K))$, and $\mathscr{L}$ is not finitely
      generated;
    \item\label{CKweakmult2d} $\mathscr{L}$ is a maximal two-sided
      ideal of~$\mathscr{B}(C(K));$
    \item\label{CKweakmult2c} $\mathscr{L} = \bigl\{ M_g + S :
      S\in\mathscr{W}(C(K))\ {\normalfont{\text{and}}}\ g\in C(K)\
      {\normalfont{\text{with}}}\ g(k) = 0\bigr\}$ for some $k\in K$.
    \end{alphenumerate}
    In the positive case, the point $k\in K$ such
    that~\alphref{CKweakmult2c} holds is uniquely determined
    by~$\mathscr{L}$.
  \end{romanenumerate}
\end{theorem}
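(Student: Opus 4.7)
The plan is to first prove~\romanref{CKweakmult1} directly and then leverage the resulting semi-direct product structure to transfer the ideal theory of~$\mathscr{B}(C(K))$ to that of the commutative algebra~$C(K)$, where Theorem~\ref{maxidealsofCK} provides complete information.

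For~\romanref{CKweakmult1}, the hypothesis that every operator on~$C(K)$ is a weak multiplication gives $\mathscr{B}(C(K)) = \mu(C(K)) + \mathscr{W}(C(K))$ at once, so only the identity $\mu(C(K))\cap\mathscr{W}(C(K)) = \{0\}$ requires work. Since $\mu$ is injective, it is enough to show that $M_g$ is not weakly compact whenever $g\in C(K)\setminus\{0\}$. I would choose $k_0\in K$ and $\epsilon>0$ with $|g|\geq\epsilon$ on an open neighbourhood $U$ of~$k_0$; because $K$ has no isolated points, $U$ is infinite, and an easy recursive construction produces pairwise disjoint, non-empty open sets $U_n\subseteq U$ together with norm-one functions $f_n\in C(K)$ supported in~$U_n$. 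The closed linear span of $(f_n)$ is isomorphic to~$c_0$ (thanks to the disjointness of supports), and $\|M_g f\|\geq \epsilon\|f\|$ on this subspace, so $M_g$ fixes a copy of~$c_0$ and is therefore not weakly compact by Theorem~\ref{PelcWCK}.

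For~\romanref{CKweakmult2}, I would first establish the equivalence of~\alphref{CKweakmult2b}, \alphref{CKweakmult2d} and~\alphref{CKweakmult2c}. Any non-fixed maximal left ideal of~$\mathscr{B}(C(K))$ contains~$\mathscr{E}(C(K))$ by Corollary~\ref{June12dich1}, every maximal two-sided ideal of~$\mathscr{B}(C(K))$ contains~$\mathscr{E}(C(K))$ by Remark~\ref{inessin2sidedmaxideals}, and $\mathscr{W}(C(K)) = \mathscr{S}(C(K))\subseteq\mathscr{E}(C(K))$ by Theorem~\ref{PelcWCK}. Consequently every ideal appearing in~\alphref{CKweakmult2b} or~\alphref{CKweakmult2d} lies in the sub-lattice $\operatorname{Lat}_{\mathscr{W}(C(K))}(\mathscr{B}(C(K)))$ defined in~\eqref{ideallatticeBI}. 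The lattice isomorphism~$\rho$ supplied by~\romanref{CKweakmult1} carries these to maximal (left, equivalently two-sided) ideals of the commutative algebra $\mu(C(K))\cong C(K)$, which by Theorem~\ref{maxidealsofCK}\romanref{maxidealsofCK1} are exactly the ideals $\mu(\ker\epsilon_k)$ for $k\in K$. Inverting~$\rho$ recovers precisely the ideals described in~\alphref{CKweakmult2c}, yielding the three-fold equivalence and a bijective parametrisation of these ideals by points of~$K$.

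It remains to incorporate~\alphref{CKweakmult2a} and to prove the uniqueness of~$k$. Proposition~\ref{trivialfgmaxrightideals}\romanref{trivialfgmaxrightideals1} shows that every fixed maximal left ideal is singly generated, so \alphref{CKweakmult2a}$\Rightarrow$\alphref{CKweakmult2b} is immediate. The converse is the main obstacle: let $\mathscr{L}$ be a non-fixed maximal left ideal, so that, by the above, $\mathscr{L} = \mu(\ker\epsilon_k)+\mathscr{W}(C(K))$ for some $k\in K$, and suppose for contradiction that $\mathscr{L}$ is finitely generated. Being maximal, $\mathscr{L}$ is closed; hence the final sentence of the paragraph introducing~\eqref{ideallatticeC} implies that $\rho(\mathscr{L}) = \mu(\ker\epsilon_k)$ is a closed, finitely generated left ideal of~$\mu(C(K))$. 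Transporting along the algebra isomorphism~$\mu$, it follows that $\ker\epsilon_k$ is a finitely generated ideal of~$C(K)$, whence Theorem~\ref{maxidealsofCK}\romanref{maxidealsofCK2} forces~$k$ to be isolated in~$K$, contradicting the standing hypothesis. The uniqueness of~$k$ then follows from the corresponding uniqueness in Theorem~\ref{maxidealsofCK}\romanref{maxidealsofCK1} combined with the bijectivity of~$\mu$ and of the lattice isomorphism~$\rho$.
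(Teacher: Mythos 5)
Your proposal is correct and follows essentially the same route as the paper: decompose $\mathscr{B}(C(K))$ as a semi-direct product, use Corollary~\ref{June12dich1} and Remark~\ref{inessin2sidedmaxideals} to place the relevant ideals in the lattice $\operatorname{Lat}_{\mathscr{W}(C(K))}(\mathscr{B}(C(K)))$, and transfer everything to $C(K)$ via $\rho$ and Theorem~\ref{maxidealsofCK}. The only (cosmetic) difference is in part~(i), where you exhibit a copy of $c_0$ on which $M_g$ is bounded below so that $M_g$ fails to be weakly compact by Theorem~\ref{PelcWCK}, whereas the paper simply notes that $M_g$ is bounded below on the infinite-dimensional subspace of functions supported in a neighbourhood where $|g|\ge\epsilon$, hence is not strictly singular; both arguments rest on the same Pe{\l}czy{\'n}ski theorem.
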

\begin{proof} 
  \romanref{CKweakmult1}. We have $\mathscr{B}(C(K)) = \mu(C(K)) +
  \mathscr{W}(C(K))$ because each operator on~$C(K)$ is a weak
  multi\-pli\-ca\-tion.  Theorem~\ref{PelcWCK} allows us to
  replace~$\mathscr{W}(C(K))$ with~$\mathscr{S}(C(K))$, which we shall
  do in the remainder of this proof because the latter ideal suits our
  approach better.

  To see that $\mu(C(K))\cap \mathscr{S}(C(K)) = \{0\}$, suppose that
  \mbox{$g\in C(K)\setminus\{0\}$}. Take $k_0\in K$ such that
  $g(k_0)\ne0$, set $\epsilon = |g(k_0)|/2>0$, and choose an open
  neighbourhood~$N$ of~$k_0$ such that $|g(k)|\ge\epsilon$ for each
  $k\in N$.  Using Urysohn's lemma and the fact that $k_0$ is not
  isolated in~$K$, we deduce that the subspace \[ F = \bigl\{ f\in
  C(K) : f(k) = 0\quad (k\in K\setminus N)\bigr\} \] of~$C(K)$ is
  infinite-dimensional. Since
  \[ \| M_g f\| = \sup\bigl\{ |g(k)f(k)| : k\in N\bigr\}\ge \epsilon
  \| f\|\quad (f\in F), \] we conclude that $M_g$ is not strictly
  singular, as required.

  \romanref{CKweakmult2}.  For each $k\in K$, let \[ \mathscr{Z}_k =
  \mu(\ker\epsilon_k) + \mathscr{S}(C(K)),\] so that
  \[ \mathscr{Z}_k = \bigl\{ M_g + S : S\in\mathscr{S}(C(K))\
  \text{and}\ g\in C(K)\ \text{with}\ g(k) = 0\bigr\}. \]
  By~\romanref{CKweakmult1}, $\mathscr{Z}_k$ is a two-sided ideal of
  codimension one in~$\mathscr{B}(C(K))$, and thus is maximal both as
  a left and a two-sided ideal. The implication
  \alphref{CKweakmult2c}$\Rightarrow$\alphref{CKweakmult2d} is now
  immediate, while
  \alphref{CKweakmult2c}$\Rightarrow$\alphref{CKweakmult2a} follows
  because $\rho(\mathscr{Z}_k) = \mu(\ker\epsilon_k)$ is not finitely
  generated by Theorem~\ref{maxidealsofCK}\romanref{maxidealsofCK2},
  so that $\mathscr{Z}_k$ is not finitely generated as a left ideal,
  as explained in the paragraph following~\eqref{ideallatticeC}.

  The implication
  \alphref{CKweakmult2a}$\Rightarrow$\alphref{CKweakmult2b} is clear
  because each fixed, maximal left ideal is finitely generated by
  Proposition~\ref{trivialfgmaxrightideals}%
  \romanref{trivialfgmaxrightideals1}.

  \alphref{CKweakmult2b}$\Rightarrow$\alphref{CKweakmult2c}.  Suppose
  that~$\mathscr{L}$ is a non-fixed, maximal left ideal
  of~$\mathscr{B}(C(K))$. Then, by Corollary~\ref{June12dich1},
  $\mathscr{L}$~con\-tains~$\mathscr{E}(C(K))$ and
  thus~$\mathscr{S}(C(K))$, so that $\mathscr{L}$ is a maximal element
  of the
  lattice~$\operatorname{Lat}_{\mathscr{S}(C(K))}(\mathscr{B}(C(K)))$
  given by~\eqref{ideallatticeBI}. Hence, in the notation
  of~\eqref{ideallatticeC}, there is a maximal element~$\mathscr{N}$
  of the lattice~$\operatorname{Lat}(\mu(C(K)))$ such that
  $\mathscr{L} = \mathscr{N} + \mathscr{S}(C(K))$.
  Theorem~\ref{maxidealsofCK}\romanref{maxidealsofCK1} implies that
  $\mathscr{N} = \mu(\ker\epsilon_k)$ for some $k\in K$, and
  consequently $\mathscr{L} = \mathscr{Z}_k$.

  \alphref{CKweakmult2d}$\Rightarrow$\alphref{CKweakmult2c}. Suppose
  that~$\mathscr{L}$ is a maximal two-sided ideal
  of~$\mathscr{B}(C(K))$. Then, as mentioned in
  Remark~\ref{inessin2sidedmaxideals}, $\mathscr{L}$
  contains~$\mathscr{E}(C(K))$ and hence~$\mathscr{L}$ contains
  $\mathscr{S}(C(K))$, so that $\mathscr{L} = \mathscr{Z}_k$ for some
  $k\in K$ by~\romanref{CKweakmult1} and
  Theorem~\ref{maxidealsofCK}\romanref{maxidealsofCK1}.

  The final clause follows because
  $\ker\epsilon_{k_1}\neq\ker\epsilon_{k_2}$ whenever $k_1,k_2\in K$
  are distinct, and hence also
  $\mathscr{Z}_{k_1}\neq\mathscr{Z}_{k_2}$.
\end{proof}

\begin{remark}\label{statusforlp}
  Example~\ref{exampleAutocomplemented}%
  \romanref{exampleAutocomplemented3}--%
  \romanref{exampleAutocomplementedc0} and Theorem~\ref{CKweakmult}
  show that there are compact Hausdorff spaces~$K$ such that
  Question~\romanref{DalesQ1} has a positive answer for $E =
  C(K)$. However, this question remains open for some very important
  $C(K)$-spaces. Indeed, it is known that~$C(K)$ contains a closed
  subspace which is isomorphic to~$C(K)$ and which is not complemented
  in~$C(K)$ for each of the following compact Hausdorff
  spaces~$K\colon$
  \begin{romanenumerate}
  \item\label{statusforlp2} $K = [0,1]$ (see~\cite{amir});
  \item\label{statusforlp2.5} $K = [0,\alpha]$ for any ordinal
    $\alpha\ge\omega^\omega$, where $[0,\alpha]$ denotes the set of
    ordinals less than or equal to~$\alpha$, equipped with the order
    topology. (Baker~\cite{baker} showed this in the case where
    $\alpha = \omega^\omega$; the conclusion for
    general~$\alpha\ge\omega^\omega$ follows immediately from Baker's
    result because $C[0,\alpha]$ is isomorphic to~$C[0,\alpha]\oplus
    C[0,\omega^\omega]$.)
  \end{romanenumerate}
  Hence, by Corollary~\ref{Sept2011keylemma}, $\mathscr{F}(C(K))$ is
  contained in a singly-generated, proper left ideal
  of~$\mathscr{B}(C(K))$ for each of these~$K$, but we do not know
  whether such a left ideal can be chosen also to be maximal (or even
  closed). 

  This question cannot be answered by a variant of Theorem~\ref{Thmlp}
  because we can strengthen the above conclusion to obtain that
  $\mathscr{K}(C(K))$ is contained in a singly-generated, proper left
  ideal of~$\mathscr{B}(C(K))$ for each of the above~$K$. To see this,
  take an operator $U\in\mathscr{B}(C(K))$ which is bounded below and
  whose range $F = U(C(K))$ is not complemented in~$C(K)$, and
  consider the isomorphism $\widetilde{U}\colon x\mapsto Ux,\, C(K)\to
  F$. Then, for each \mbox{$S\in\mathscr{K}(C(K))$}, the operator
  \mbox{$S\widetilde{U}^{-1}\colon F\to C(K)$} has an extension
  $T\in\mathscr{K}(C(K))$ by a theorem of Grothen\-dieck
  (see~\cite[pp.~559--560]{groth}, or \cite[Theorem~1]{linden}). Hence
  we have $S = TU$, and con\-sequent\-ly
  \mbox{$\mathscr{K}(C(K))\subseteq\mathscr{L}_{\{ U\}}$}.
\end{remark}

\section{A non-fixed and singly-generated, maximal left ideal of
  operators}\label{nonfixedmaxleftideal}
\noindent
The main aim of this section is to prove
Theorems~\ref{Thmnonfixedmaxleftideal}
and~\ref{Thmnonfixedmaxleftideal2}. Several parts of those theorems
are special cases of more general results, which may be of independent
interest, and so we shall take a more general approach, specializing
only when we need to.

Recall that, for a non-empty set~$\mathbb{I}$, we denote by
$\ell_\infty(\mathbb{I})$ the Banach space of bounded, complex-valued
functions defined on~$\mathbb{I}$, and $\ell_\infty =
\ell_\infty(\N)$. Our first result collects some known facts about
operators from~$\ell_\infty(\mathbb{I})$ that we shall use several times.
\begin{lemma}\label{operatorsXtolinftywc}
  Let $\mathbb{I}$ be a non-empty set, and let $X$ be a Banach space.
  \begin{romanenumerate}
  \item\label{operatorsXtolinftywc1} An operator from
    $\ell_\infty(\mathbb{I})$ to~$X$ is weakly compact if and only if it
    is strictly singular.
  \item\label{operatorsXtolinftywc2} Suppose that the set $\mathbb{I}$ is
    infinite. Then each opera\-tor from $\ell_\infty(\mathbb{I})$ to~$X$
    is weakly compact if and only if $X$ does not contain a subspace
    iso\-mor\-phic to~$\ell_\infty$.
  \end{romanenumerate}
\end{lemma}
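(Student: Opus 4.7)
The approach is to recognize $\ell_\infty(\mathbb{I})$ as a $C(K)$-space, namely $C(\beta\mathbb{I})$ where $\beta\mathbb{I}$ is the Stone--\v{C}ech compactification of the discrete set~$\mathbb{I}$, so that Pe{\l}czy{\a'n}ski's theorem (Theorem~\ref{PelcWCK}) becomes available. This identification immediately yields part~\romanref{operatorsXtolinftywc1}: the theorem states that, for operators from a $C(K)$-space into an arbitrary Banach space, weak compactness is equivalent to strict singularity (both being equivalent to not fixing a copy of~$c_0$).

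For part~\romanref{operatorsXtolinftywc2}, I would treat the two implications separately. The easier direction asserts, in contrapositive form, that if $X$ contains a subspace isomorphic to~$\ell_\infty$, then there exists a non-weakly-compact operator from $\ell_\infty(\mathbb{I})$ to~$X$. To exhibit one, fix an isomorphic embedding $J\colon \ell_\infty\to X$ and, using that~$\mathbb{I}$ is infinite, choose a countably infinite subset $\mathbb{J}\subseteq\mathbb{I}$. The restriction map $P\colon \ell_\infty(\mathbb{I})\to\ell_\infty(\mathbb{J})$ is a norm-one projection, and the composition $JP$ is an isomorphism on $\ell_\infty(\mathbb{J})\cong\ell_\infty$; since $\ell_\infty$ is non-reflexive, its unit ball is not relatively weakly compact, so $JP$ fails to be weakly compact.

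For the converse, suppose that $X$ contains no subspace isomorphic to~$\ell_\infty$, and let $T\colon\ell_\infty(\mathbb{I})\to X$ be given. I would appeal to Rosenthal's classical theorem on operators defined on $\ell_\infty(\mathbb{I})$-spaces, which asserts that a bounded operator out of $\ell_\infty(\mathbb{I})$ that is not weakly compact must be an isomorphism when restricted to some subspace isomorphic to~$\ell_\infty$ (indeed, to $\ell_\infty(M)$ for some infinite $M\subseteq\mathbb{I}$). Consequently, if $T$ failed to be weakly compact, its range would contain a copy of~$\ell_\infty$, contradicting the hypothesis on~$X$; hence every such $T$ is weakly compact.

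The principal obstacle is the appeal to Rosenthal's theorem in the converse half of~\romanref{operatorsXtolinftywc2}, a deep classical result not included in the excerpt but standard in the literature. An alternative route would start from Pe{\l}czy{\a'n}ski's theorem (giving a copy of~$c_0$ on which $T$ is an isomorphism) and upgrade this $c_0$-copy to an $\ell_\infty$-copy by exploiting the injectivity of~$\ell_\infty$ together with a disjointification argument inside $\ell_\infty(\mathbb{I})$; but the cleanest presentation is simply to cite Rosenthal's result and be done.
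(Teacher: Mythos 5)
Your proposal is correct and follows essentially the same route as the paper: part~\romanref{operatorsXtolinftywc1} is obtained as a special case of Pe{\l}czy{\a'n}ski's theorem for $C(K)$-spaces, the easy direction of~\romanref{operatorsXtolinftywc2} by composing a restriction-to-a-countable-subset operator with an embedding of~$\ell_\infty$ into~$X$, and the hard direction by citing Rosenthal's theorem, which is exactly the content of the reference \cite[Proposition~2.f.4]{lt1} used in the paper.
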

\begin{proof}
  \romanref{operatorsXtolinftywc1}. This is a special case of
  Theorem~\ref{PelcWCK}.

  \romanref{operatorsXtolinftywc2}.  The hard part is the
  implication~$\Leftarrow$, which however follows immediately
  from~\cite[Proposition~2.f.4]{lt1}.

  The forward implication is straightforward. Indeed, suppose
  contra\-pos\-i\-tive\-ly that $X$ contains a sub\-space which is
  iso\-mor\-phic to~$\ell_\infty$, and take an operator
  $U\in\mathscr{B}(\ell_\infty,X)$ which is bounded below. Choose an
  injective mapping $\theta\colon\N\to\mathbb{I}$, and define an
  operator
  $C_\theta\in\mathscr{B}(\ell_\infty(\mathbb{I}),\ell_\infty)$ by
  $C_\theta f = f\circ\theta$ for each
  $f\in\ell_\infty(\mathbb{I})$. Then $UC_\theta$ is not weakly
  compact, for instance because it fixes a copy of~$\ell_\infty$.
\end{proof}

In the remainder of this section we shall consider a Banach space $X$
such that
\begin{capromanenumerate}
\item\label{assumption1} the bi\-dual of~$X$ is isomorphic
  to~$\ell_\infty(\mathbb{I}_1)$ for some infinite set~$\mathbb{I}_1$ via a
  fixed isomorphism $V\colon X^{**}\to\ell_\infty(\mathbb{I}_1)$; and
\item\label{assumption2} no subspace of~$X$ is isomorphic
  to~$\ell_\infty$.
\end{capromanenumerate}
For example, $X = c_0$ satisfies both of these conditions with
$\mathbb{I}_1 = \N$.

Let $\mathbb{I}_2$ be a disjoint copy of~$\mathbb{I}_1$ (that is,
$\mathbb{I}_2$ is a set of the same cardinality as~$\mathbb{I}_1$ and
satisfies $\mathbb{I}_1\cap \mathbb{I}_2 = \emptyset$), and set $\mathbb{I}
= \mathbb{I}_1\cup\mathbb{I}_2$.  We consider $\ell_\infty(\mathbb{I}_1)$
and $\ell_\infty(\mathbb{I}_2)$ as complementary subspaces of
$\ell_\infty(\mathbb{I})$ in the natural way, and denote by $P_1$ and
$P_2$ the corresponding projections of $\ell_\infty(\mathbb{I})$ onto
$\ell_\infty(\mathbb{I}_1)$ and $\ell_\infty(\mathbb{I}_2)$, respectively.
Moreover, we shall choose a~bijec\-tion
$\phi\colon\mathbb{I}_2\to\mathbb{I}$; we then obtain an isometric
isomorphism $C_\phi$ of~$\ell_\infty(\mathbb{I})$ onto the
sub\-space~$\ell_\infty(\mathbb{I}_2)$ by the definition $C_\phi f =
f\circ\phi$ for each $f\in\ell_\infty(\mathbb{I})$.

Let $E = X\oplus\ell_\infty(\mathbb{I})$ with norm
$\bigl\|(x,f)\bigr\|_E = \max\bigl\{\|x\|_X,\|f\|_\infty\bigr\}$.  We
identify operators~$T$ on~$E$ with $(2\times 2)$-matrices
\[ \begin{pmatrix} T_{1,1}\colon X\to X & T_{1,2}\colon
  \ell_\infty(\mathbb{I})\to X\\ T_{2,1}\colon X\to
  \ell_\infty(\mathbb{I}) & T_{2,2}\colon
  \ell_\infty(\mathbb{I})\to\ell_\infty(\mathbb{I}) \end{pmatrix}. \] Note
that assumption~\romanref{assumption2} and
Lemma~\ref{operatorsXtolinftywc}\romanref{operatorsXtolinftywc2} imply
that the operator $T_{1,2}$ is always weakly compact. This fact will
play a key role for us.

Despite our focus on left ideals, our first result about the Banach
space~$E$ is concerned with two-sided ideals.
\begin{proposition}\label{twosidedidealsinBE}
  \begin{romanenumerate}
  \item\label{defnidealI1} The set
    \begin{equation*}
      \mathscr{W}_1 = \biggl\{\begin{pmatrix} T_{1,1} & T_{1,2}\\ 
        T_{2,1} & T_{2,2} \end{pmatrix}\in\mathscr{B}(E) :
      T_{1,1}\in\mathscr{W}(X)\biggr\} \end{equation*}
    is a proper, closed two-sided ideal of~$\mathscr{B}(E)$, and
    $\mathscr{W}_1$ is a maximal two-sided ideal of~$\mathscr{B}(E)$
    if and only if $\mathscr{W}(X)$ is a maximal  two-sided ideal
    of~$\mathscr{B}(X)$. 
  \item\label{defnidealI2} The set \begin{equation*} \mathscr{W}_2 =
      \biggl\{\begin{pmatrix} T_{1,1} & T_{1,2}\\ T_{2,1} &
        T_{2,2} \end{pmatrix}\in\mathscr{B}(E) :
      T_{2,2}\in\mathscr{W}(\ell_\infty(\mathbb{I}))\biggr\} \end{equation*}
    is a proper, closed two-sided ideal of~$\mathscr{B}(E)$, and the
    following three conditions are equivalent:
    \begin{alphenumerate}
    \item\label{defnidealI2a} $\mathscr{W}_2$ is a maximal  two-sided
      ideal of~$\mathscr{B}(E);$ 
    \item\label{defnidealI2b} $\mathscr{W}(\ell_\infty(\mathbb{I}))$ is a
      maximal two-sided ideal of~$\mathscr{B}(\ell_\infty(\mathbb{I}));$
    \item\label{defnidealI2c} $\mathbb{I}$ is countable.
    \end{alphenumerate}
  \end{romanenumerate}
\end{proposition}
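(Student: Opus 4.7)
The plan is to exploit a single common mechanism for both parts: by Lemma~\ref{operatorsXtolinftywc}\romanref{operatorsXtolinftywc2} together with hypothesis~\romanref{assumption2}, every operator from $\ell_\infty(\mathbb{I})$ into~$X$ is weakly compact. Consequently, for any $S,T\in\mathscr{B}(E)$ the matrix cross-term $S_{1,2}T_{2,1}$ lies in $\mathscr{W}(X)$, and $S_{2,1}T_{1,2}$ lies in $\mathscr{W}(\ell_\infty(\mathbb{I}))$. Hence the off-diagonal contributions to the entries $(ST)_{1,1}$ and $(ST)_{2,2}$ vanish modulo the respective ideals of weakly compact operators, turning the `diagonal corner' maps into algebra homomorphisms.

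For part~\romanref{defnidealI1}, I would observe that $\mathscr{W}_1$ is closed as the preimage of the closed ideal~$\mathscr{W}(X)$ under the bounded map $T\mapsto T_{1,1}$, and proper because hypothesis~\romanref{assumption1} forces $X$ to be non-reflexive (otherwise $X\cong X^{**}\cong\ell_\infty(\mathbb{I}_1)$, which is non-reflexive since $\mathbb{I}_1$ is infinite), so that $I_X\notin\mathscr{W}(X)$. The common observation above then shows that
\[ \pi_1\colon T\mapsto T_{1,1}+\mathscr{W}(X),\quad\mathscr{B}(E)\to\mathscr{B}(X)/\mathscr{W}(X), \]
is a unital algebra homomorphism with kernel exactly $\mathscr{W}_1$, and it is surjective via operators of the form $\bigl(\begin{smallmatrix}U & 0\\ 0 & 0\end{smallmatrix}\bigr)$. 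The induced isomorphism $\mathscr{B}(E)/\mathscr{W}_1\cong\mathscr{B}(X)/\mathscr{W}(X)$ matches up two-sided ideals and yields the claimed equivalence of maximality. The equivalence \alphref{defnidealI2a}$\Leftrightarrow$\alphref{defnidealI2b} in part~\romanref{defnidealI2} is handled identically via $\pi_2\colon T\mapsto T_{2,2}+\mathscr{W}(\ell_\infty(\mathbb{I}))$, producing $\mathscr{B}(E)/\mathscr{W}_2\cong\mathscr{B}(\ell_\infty(\mathbb{I}))/\mathscr{W}(\ell_\infty(\mathbb{I}))$.

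The remaining equivalence \alphref{defnidealI2b}$\Leftrightarrow$\alphref{defnidealI2c} is the main obstacle, being a structural fact about~$\mathscr{B}(\ell_\infty(\mathbb{I}))$ that the preceding machinery cannot dispatch. For countable~$\mathbb{I}$ we have $\ell_\infty(\mathbb{I})\cong\ell_\infty$ and Rosenthal's $\ell_\infty$-theorem applies: any $T\in\mathscr{B}(\ell_\infty)\setminus\mathscr{W}(\ell_\infty)$ fixes a copy of~$\ell_\infty$, and the injectivity of~$\ell_\infty$ then produces $R,U\in\mathscr{B}(\ell_\infty)$ with $RTU=I_{\ell_\infty}$, so the two-sided ideal generated by~$T$ is already~$\mathscr{B}(\ell_\infty)$; this forces $\mathscr{W}(\ell_\infty)$ to be the unique maximal two-sided ideal. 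For uncountable~$\mathbb{I}$, I would disprove maximality by exhibiting a strictly larger proper two-sided ideal: take a coordinate projection~$P$ of $\ell_\infty(\mathbb{I})$ onto $\ell_\infty(\mathbb{I}_0)$ for some countably infinite $\mathbb{I}_0\subset\mathbb{I}$. Then $P$ lies outside $\mathscr{W}(\ell_\infty(\mathbb{I}))$ since $\ell_\infty$ is non-reflexive, yet every operator in the two-sided ideal generated by~$P$ factors through~$\ell_\infty$; a density-character/cardinality comparison rules out such a factorization of the identity on $\ell_\infty(\mathbb{I})$, establishing that this ideal is proper and strictly larger than~$\mathscr{W}(\ell_\infty(\mathbb{I}))$.
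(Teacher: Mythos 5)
Your treatment of part~\romanref{defnidealI1}, of the equivalence \alphref{defnidealI2a}$\Leftrightarrow$\alphref{defnidealI2b}, and of the implication \alphref{defnidealI2c}$\Rightarrow$\alphref{defnidealI2b} coincides with the paper's: the cross-terms $S_{1,2}T_{2,1}$ and $S_{2,1}T_{1,2}$ are weakly compact because every operator from $\ell_\infty(\mathbb{I})$ into~$X$ is, so the corner maps induce surjective algebra homomorphisms onto $\mathscr{B}(X)/\mathscr{W}(X)$ and $\mathscr{B}(\ell_\infty(\mathbb{I}))/\mathscr{W}(\ell_\infty(\mathbb{I}))$ with kernels $\mathscr{W}_1$ and~$\mathscr{W}_2$, and for countable~$\mathbb{I}$ Rosenthal's theorem (the content of \cite[Proposition~2.f.4]{lt1}, which the paper cites) together with injectivity shows that every non-weakly-compact operator on~$\ell_\infty$ generates the whole algebra as a two-sided ideal.

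The gap is in \alphref{defnidealI2b}$\Rightarrow$\alphref{defnidealI2c}. To refute maximality of $\mathscr{W}(\ell_\infty(\mathbb{I}))$ you must exhibit a proper two-sided ideal that \emph{contains} it strictly. The two-sided ideal generated by your coordinate projection~$P$ contains an element outside $\mathscr{W}(\ell_\infty(\mathbb{I}))$, but nothing you say shows it contains $\mathscr{W}(\ell_\infty(\mathbb{I}))$, so ``proper and strictly larger'' does not follow; a proper ideal merely incomparable with $\mathscr{W}(\ell_\infty(\mathbb{I}))$ contradicts nothing. The paper instead considers $\mathscr{G}_{\ell_\infty}(\ell_\infty(\mathbb{I})) + \mathscr{W}(\ell_\infty(\mathbb{I}))$, which does strictly contain $\mathscr{W}(\ell_\infty(\mathbb{I}))$, and its properness requires more than ``the identity does not factor through~$\ell_\infty$'': one must exclude $I_{\ell_\infty(\mathbb{I})} = R + S$ with $R$ factoring through~$\ell_\infty$ and $S$ weakly compact. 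This is where the Fredholm step enters: $S$ is strictly singular by Lemma~\ref{operatorsXtolinftywc}\romanref{operatorsXtolinftywc1}, so $R = I_{\ell_\infty(\mathbb{I})} - S$ is Fredholm, and since $\ell_\infty(\mathbb{I})$ is isomorphic to its hyperplanes the identity then factors through~$R$, hence through~$\ell_\infty$ after all. Finally, your ``density-character/cardinality comparison'' is not conclusive in ZFC: it is consistent that $2^{\aleph_1} = 2^{\aleph_0}$, in which case $\ell_\infty(\aleph_1)$ and $\ell_\infty$ have the same density character. The correct obstruction is combinatorial, e.g.\ that $c_0(\aleph_1)\subseteq\ell_\infty(\mathbb{I})$ does not embed into~$\ell_\infty$ (uncountably many purported $c_0$-basis vectors would share a coordinate of modulus at least some $\delta>0$, and suitably signed finite sums of them would then have arbitrarily large norm).
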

\begin{proof} 
  \romanref{defnidealI1}.  The mapping
  \[ T\mapsto T_{1,1} + \mathscr{W}(X),\quad
  \mathscr{B}(E)\to\mathscr{B}(X)\big/\mathscr{W}(X), \] is a
  surjective algebra homomorphism of norm one. Hence its kernel, which
  is equal to~$\mathscr{W}_1$, is a closed two-sided ideal
  of~$\mathscr{B}(E)$. This ideal is proper because $X$~is
  non-reflexive by assumption~\romanref{assumption1}. The fundamental
  isomorphism theorem implies that the Banach algebras
  $\mathscr{B}(E)\big/\mathscr{W}_1$
  and~$\mathscr{B}(X)\big/\mathscr{W}(X)$ are isomorphic, and so
  $\mathscr{B}(E)\big/\mathscr{W}_1$ is simple if and only if
  $\mathscr{B}(X)\big/\mathscr{W}(X)$ is simple. Consequently
  $\mathscr{W}_1$ is a maximal two-sided ideal of $\mathscr{B}(E)$ if
  and only if $\mathscr{W}(X)$ is a maximal two-sided ideal
  of~$\mathscr{B}(X)$.

  \romanref{defnidealI2}.  An obvious modification of the argument
  given above shows that $\mathscr{W}_2$ is a proper, closed two-sided
  ideal of~$\mathscr{B}(E)$, and that
  conditions~\alphref{defnidealI2a} and~\alphref{defnidealI2b} are
  equivalent.  The im\-pli\-ca\-tion
  \alphref{defnidealI2c}$\Rightarrow$%
  \alphref{defnidealI2b} follows from~\cite[Proposition~2.f.4]{lt1}.

  Conversely, to prove that \alphref{defnidealI2b}$\Rightarrow$%
  \alphref{defnidealI2c}, suppose that
  $\mathscr{W}(\ell_\infty(\mathbb{I}))$ is a maximal two-sided ideal
  of~$\mathscr{B}(\ell_\infty(\mathbb{I}))$, and denote by
  $\mathscr{G}_{\ell_\infty}(\ell_\infty(\mathbb{I}))$ the set of
  operators on~$\ell_\infty(\mathbb{I})$ that factor
  through~$\ell_\infty$. This is a two-sided ideal
  of~$\mathscr{B}(\ell_\infty(\mathbb{I}))$ because $\ell_\infty$ is
  isomorphic to \mbox{$\ell_\infty\oplus\ell_\infty$}. Hence
  $\mathscr{G}_{\ell_\infty}(\ell_\infty(\mathbb{I})) +
  \mathscr{W}(\ell_\infty(\mathbb{I}))$ is also a two-sided ideal,
  which is strictly greater than
  $\mathscr{W}(\ell_\infty(\mathbb{I}))$ because
  $\ell_\infty(\mathbb{I})$ contains a complemented copy
  of~$\ell_\infty$, and any projection with range isomorphic
  to~$\ell_\infty$ belongs to
  $\mathscr{G}_{\ell_\infty}(\ell_\infty(\mathbb{I}))\setminus
  \mathscr{W}(\ell_\infty(\mathbb{I}))$. Con\-sequent\-ly, by the
  maximality of~$\mathscr{W}(\ell_\infty(\mathbb{I}))$, there are
  operators $R\in\mathscr{G}_{\ell_\infty}(\ell_\infty(\mathbb{I}))$
  and $S\in \mathscr{W}(\ell_\infty(\mathbb{I}))$ such that
  $I_{\ell_\infty(\mathbb{I})} = R+S$.  Then $R =
  I_{\ell_\infty(\mathbb{I})}-S$ is a Fredholm operator by
  \cite[Proposition~2.c.10]{lt1} and Lemma~\ref{operatorsXtolinftywc}%
  \romanref{operatorsXtolinftywc1}, and this implies that
  $I_{\ell_\infty(\mathbb{I})} = URT$ for some
  operators~$T,U\in\mathscr{B}(\ell_\infty(\mathbb{I}))$
  because~$\ell_\infty(\mathbb{I})$ is isomorphic to its
  hyper\-planes. Thus the identity operator on~$\ell_\infty(\mathbb{I})$
  factors through~$\ell_\infty$, which is possible only
  if~$\mathbb{I}$ is countable.
\end{proof}

Set \begin{equation}\label{defnL} L = \begin{pmatrix} 0 & 0\\
    V\kappa_X & C_\phi \end{pmatrix}\in\mathscr{B}(E), \end{equation}
where the operators~$V$ and $C_\phi$ were introduced on
p.~\pageref{assumption1}.  Since the ranges of $V$ and~$C_\phi$ are
contained in the complementary subspaces~$\ell_\infty(\mathbb{I}_1)$
and~$\ell_\infty(\mathbb{I}_2)$ of~$\ell_\infty(\mathbb{I})$,
respectively, we have
\begin{align*}
  \|L(x,f)\|_E &= \|V\kappa_X x + C_\phi f\|_\infty  =
  \max\bigl\{\|V\kappa_X x\|_\infty,\| C_\phi
  f\|_\infty\bigr\}\\ &= \max\bigl\{\|V\kappa_X
  x\|_\infty,\|f\|_\infty\bigr\}\quad
  (x\in X,\,f\in\ell_\infty(\mathbb{I})),
\end{align*}
which shows that the operator $L$ is bounded below because $V\kappa_X$
is bounded below.  This conclusion is also immediate from our next
result and Corollary~\ref{Sept2011keylemma}.

\begin{theorem}\label{leftidealgenbyL}
  The ideal $\mathscr{W}_1$ defined in
  Proposition~{\normalfont{\ref{twosidedidealsinBE}}}%
  \romanref{defnidealI1} is the left ideal generated by the operator
  $L$ given by~\eqref{defnL}{\normalfont{;}} that is,
  \[ \mathscr{W}_1 = \mathscr{L}_{\{L\}}. \]
\end{theorem}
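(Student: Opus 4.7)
The plan is to prove the two inclusions $\mathscr{L}_{\{L\}} \subseteq \mathscr{W}_1$ and $\mathscr{W}_1 \subseteq \mathscr{L}_{\{L\}}$ separately. The former is immediate: the $(1,1)$-entry of $L$ is zero, hence trivially weakly compact, so $L \in \mathscr{W}_1$, and since $\mathscr{W}_1$ is a left ideal by Proposition~\ref{twosidedidealsinBE}, it contains $\mathscr{L}_{\{L\}}$.

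For the reverse inclusion, I would take an arbitrary $T = (T_{i,j})_{i,j=1}^{2} \in \mathscr{W}_1$ and construct $S = (S_{i,j})_{i,j=1}^{2} \in \mathscr{B}(E)$ with $SL = T$. Because the first column of $L$ vanishes, multiplying out block matrices shows that $S_{1,1}$ and $S_{2,1}$ play no role (and may be set to $0$), while $SL = T$ reduces to the four equations $S_{i,2} V\kappa_X = T_{i,1}$ and $S_{i,2} C_\phi = T_{i,2}$ for $i = 1,2$. The idea is to define each $S_{i,2}$ separately on the complementary subspaces $\ell_\infty(\mathbb{I}_1)$ and $\ell_\infty(\mathbb{I}_2)$ of $\ell_\infty(\mathbb{I})$, and then glue via the projections $P_1, P_2$.

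On $\ell_\infty(\mathbb{I}_2)$, the equation $S_{i,2} C_\phi = T_{i,2}$ forces $S_{i,2}|_{\ell_\infty(\mathbb{I}_2)} = T_{i,2} C_\phi^{-1}$, which is well-defined and bounded because $C_\phi$ is an isometric isomorphism of $\ell_\infty(\mathbb{I})$ onto $\ell_\infty(\mathbb{I}_2)$. On $\ell_\infty(\mathbb{I}_1)$, the equation $S_{i,2} V\kappa_X = T_{i,1}$ translates into the requirement $S_{i,2}(V\kappa_X x) = T_{i,1} x$ for every $x \in X$; since $V\kappa_X$ is bounded below, this unambiguously defines a bounded operator on the closed subspace $V\kappa_X(X) \subseteq \ell_\infty(\mathbb{I}_1)$, and the remaining task is to extend it to all of $\ell_\infty(\mathbb{I}_1)$. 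For $i = 2$ the codomain is $\ell_\infty(\mathbb{I})$, which is $1$-injective (Hahn--Banach), so a bounded extension exists automatically.

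The main obstacle is the case $i = 1$: here the codomain $X$ is not injective in general, and this is precisely where the weak-compactness hypothesis on $T_{1,1}$ becomes indispensable. By Gantmakher's theorem, $T_{1,1} \in \mathscr{W}(X)$ is equivalent to $T_{1,1}^{**}(X^{**}) \subseteq \kappa_X(X)$, so the formula
\[ S_{1,2}|_{\ell_\infty(\mathbb{I}_1)} = \kappa_X^{-1} \circ T_{1,1}^{**} \circ V^{-1} \]
defines a bounded operator from $\ell_\infty(\mathbb{I}_1)$ to $X$, and on $V\kappa_X(X)$ it satisfies $S_{1,2}(V\kappa_X x) = \kappa_X^{-1} T_{1,1}^{**} \kappa_X x = \kappa_X^{-1} \kappa_X T_{1,1} x = T_{1,1} x$, as required. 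Setting $S_{i,2} = (S_{i,2}|_{\ell_\infty(\mathbb{I}_1)}) P_1 + (T_{i,2} C_\phi^{-1}) P_2$ and $S_{1,1} = S_{2,1} = 0$ then yields an $S \in \mathscr{B}(E)$ with $SL = T$, completing the proof.
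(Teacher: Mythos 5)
Your proposal is correct and follows essentially the same route as the paper's proof: the three key ingredients (composing with $C_\phi^{-1}P_2$ to recover the second column, the injectivity of $\ell_\infty(\mathbb{I})$ to extend $T_{2,1}(V\kappa_X)^{-1}$, and Gantmakher's characterization $T_{1,1}^{**}(X^{**})\subseteq\kappa_X(X)$ to build $\kappa_X^{-1}T_{1,1}^{**}V^{-1}$) are exactly those used there. The only difference is presentational: the paper exhibits $T$ as a sum of three left multiples of $L$, whereas you assemble a single operator $S$ with $SL=T$.
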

\begin{proof}
  We have $L\in\mathscr{W}_1$ because $L_{1,1} = 0$, and hence the
  inclusion $\supseteq$ follows.

  We shall prove the reverse inclusion in three steps. First, we see
  that
  \[ \begin{pmatrix} 0 & 0\\ 0 &
    I_{\ell_\infty(\mathbb{I})} \end{pmatrix} = \begin{pmatrix} 0 & 0\\
    0 & C_\phi^{-1}P_2 \end{pmatrix} \begin{pmatrix} 0 & 0\\
    V\kappa_X & 
    C_\phi \end{pmatrix}\in \mathscr{L}_{\{L\}}, \] and consequently we have
  \begin{equation}\label{leftidealgenbyLeq1}
    \begin{pmatrix} 0 & T_{1,2}\\ 0 & T_{2,2}\end{pmatrix} = 
    \begin{pmatrix} 0 & T_{1,2}\\ 0 &
      T_{2,2} \end{pmatrix}\begin{pmatrix} 0 & 0\\ 0 & 
      I_{\ell_\infty(\mathbb{I})} \end{pmatrix}\in\mathscr{L}_{\{L\}}
  \end{equation}   
  for each $T_{1,2}\in\mathscr{B}(\ell_\infty(\mathbb{I}),X)$ and
  $T_{2,2}\in\mathscr{B}(\ell_\infty(\mathbb{I}))$.
  
  Second, let $T_{2,1}\in\mathscr{B}(X,\ell_\infty(\mathbb{I}))$.  Being
  bounded below, the operator $V\kappa_X$ is an isomorphism onto its
  range $Y := V\kappa_X(X)\subseteq\ell_\infty(\mathbb{I}_1)$, so that
  it has an inverse $R\in\mathscr{B}(Y,X)$. By the injectivity of
  $\ell_\infty(\mathbb{I}_1)$, the composite operator
  $T_{2,1}R\in\mathscr{B}(Y,\ell_\infty(\mathbb{I}))$ extends to
  an~opera\-tor
  $S\in\mathscr{B}(\ell_\infty(\mathbb{I}_1),\ell_\infty(\mathbb{I}))$,
  which then satisfies $SV\kappa_X = T_{2,1}$. Hence we have
  \begin{equation}\label{leftidealgenbyLeq2}
    \begin{pmatrix} 0 & 0\\ T_{2,1} & 0\end{pmatrix} =  
    \begin{pmatrix} 0 & 0\\ 0 &
      SP_1 \end{pmatrix}\begin{pmatrix} 0 & 0\\ V\kappa_X &
      C_\phi \end{pmatrix}\in\mathscr{L}_{\{L\}}. \end{equation}  

  Third, each operator $T_{1,1}\in\mathscr{W}(X)$ satisfies
  $T_{1,1}^{**}(X^{**})\subseteq\kappa_X(X)$ (\emph{e.g.}, see
  \cite[Theo\-rem~3.5.8]{meg}). We can therefore define an operator
  $U\in\mathscr{B}(\ell_\infty(\mathbb{I}_1),X)$ by \[ Uf =
  \kappa_X^{-1}T_{1,1}^{**}V^{-1}f\quad
  (f\in\ell_\infty(\mathbb{I}_1)). \] Since $\kappa_X U V\kappa_X =
  T_{1,1}^{**}\kappa_X = \kappa_X T_{1,1}$, we have $UV\kappa_X=
  T_{1,1}$, and so
 \begin{equation}\label{leftidealgenbyLeq3} 
 \begin{pmatrix} T_{1,1} & 0\\ 0 & 0\end{pmatrix} =  
 \begin{pmatrix} 0 & UP_1\\ 0 &
   0 \end{pmatrix} \begin{pmatrix} 0 & 0\\ V\kappa_X &
   C_\phi \end{pmatrix}\in\mathscr{L}_{\{L\}}. \end{equation}  

Combining \eqref{leftidealgenbyLeq1}--\eqref{leftidealgenbyLeq3}, we
conclude that each operator $T\in\mathscr{W}_1$ belongs
to~$\mathscr{L}_{\{L\}}$.
\end{proof}

\begin{remark}
  Since the operator $L$ given by~\eqref{defnL} is bounded below and
  generates a proper left ideal of~$\mathscr{B}(E)$, its range is not
  complemented in~$E$ by Lemma~\ref{properidealgeneratedlemma}.  This
  is also easy to verify directly.
\end{remark}

A Banach space $F$ has \emph{very few operators} if $F$ is
infinite-dimensional and each operator on~$F$ is the sum of a scalar
multiple of the identity operator and a compact operator; that is,
$\mathscr{B}(F) = \mathbb{C}I_F + \mathscr{K}(F)$. Argyros and
Haydon~\cite{ah} constructed the first example of a~Banach
space~$X_{\text{\normalfont{AH}}}$ which has very few operators. We
shall now specialize to the case where $X =
X_{\text{\normalfont{AH}}}$. The following result collects the
properties of $X_{\text{\normalfont{AH}}}$ that we shall~require.

\begin{theorem}[Argyros and Haydon]\label{thmAH} There is a
  Banach space~$X_{\text{\normalfont{AH}}}$ with the following three
  properties:
  \begin{romanenumerate}
  \item\label{thmAH1} $X_{\text{\normalfont{AH}}}$ has very few
    operators;
  \item\label{thmAH2} $X_{\text{\normalfont{AH}}}$ has a Schauder
    basis;
  \item\label{thmAH3} the dual space of $X_{\text{\normalfont{AH}}}$
    is isomorphic to~$\ell_1$.
  \end{romanenumerate}
\end{theorem}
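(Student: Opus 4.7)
The statement is the construction of Argyros and Haydon, and our only plan is to quote their deep work~\cite{ah}; nonetheless, here is a sketch of what is actually needed to build such a space. The proof proceeds by a variant of the Bourgain--Delbaen method, which produces an $\mathcal{L}_\infty$-space as an increasing union of finite-dimensional subspaces sitting inside $\ell_\infty(\Gamma)$ for a suitable countable index set $\Gamma = \bigcup_{n\in\N}\Gamma_n$. Concretely, one chooses parameters $\theta,\sigma\in(0,1)$ and inductively constructs extension functionals $c_\gamma^*\in \ell_\infty(\Gamma_{<n})^*$ for each $\gamma\in\Gamma_n\setminus\Gamma_{<n}$ together with embeddings $i_{n,n+1}\colon \ell_\infty(\Gamma_{\le n})\to \ell_\infty(\Gamma_{\le n+1})$; the space $X_{\text{\normalfont{AH}}}$ is then the closure of the union of the ranges of the compatible system of embeddings. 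By the general theory of Bourgain--Delbaen spaces, this automatically yields a Banach space which is an $\mathcal{L}_{\infty,C}$-space for some constant $C>0$, hence by Lindenstrauss--Pe\l{}czy\'{n}ski its dual is isomorphic to~$\ell_1$, establishing~\romanref{thmAH3}, and the natural unit vectors of~$\ell_\infty(\Gamma)$ (enumerated in the order in which the sets~$\Gamma_n$ are created) form a Schauder basis of~$X_{\text{\normalfont{AH}}}$, establishing~\romanref{thmAH2}.

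The substantial work is~\romanref{thmAH1}, the scalar-plus-compact property. The idea is to choose the extension functionals~$c_\gamma^*$ as weighted averages of previously constructed biorthogonal functionals, with the weights and lengths governed by a~lacunary sequence of integers $(m_j,n_j)$ satisfying rapid-growth conditions familiar from the theory of mixed Tsirelson spaces and the Gowers--Maurey hereditarily indecomposable construction. A coding function~$\sigma$ assigns, to each admissible analysis of such averages, a new index~$\gamma$, and it is this self-similar coding which forces any bounded operator $T\in\mathscr{B}(X_{\text{\normalfont{AH}}})$ to respect the averages up to small errors.

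The heart of the argument is a repeated averaging argument showing that for every bounded sequence $(x_k)$ of successive vectors in $X_{\text{\normalfont{AH}}}$ and every operator $T$, the distances $\operatorname{dist}(Tx_k,\C x_k)\to 0$; this is proved by contradiction, extracting a rapidly increasing sequence of so-called rapidly increasing sequences, applying the coding~$\sigma$ to generate an evaluation functional of suitably large norm which simultaneously sees $Tx_k$ and is orthogonal to $x_k$, and deriving a contradiction with the norm bound on~$T$. Once one knows that every $T\in\mathscr{B}(X_{\text{\normalfont{AH}}})$ satisfies $Tx - \lambda(T)x\to 0$ along every normalised block sequence for some scalar $\lambda(T)\in\C$, a standard compactness/gliding-hump argument shows that $T - \lambda(T)I\in\mathscr{K}(X_{\text{\normalfont{AH}}})$, yielding~\romanref{thmAH1}.

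The main obstacle is of course this last analytic step: producing a construction for which the mixed Tsirelson norms and the coding $\sigma$ simultaneously deliver an~$\mathcal{L}_\infty$-space (needed for~\romanref{thmAH3}) and enough rigidity to force the scalar-plus-compact dichotomy. Since a self-contained proof would occupy an entire paper, our proposal reduces to invoking the Argyros--Haydon construction as a black box, extracting only the three listed consequences, which is all that is needed for the applications in Theorems~\ref{Thmnonfixedmaxleftideal} and~\ref{Thmnonfixedmaxleftideal2}.
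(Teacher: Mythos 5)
The paper offers no proof of this theorem at all: it is imported verbatim from Argyros and Haydon~\cite{ah}, exactly as you propose, so your approach coincides with the paper's. Your sketch of the Bourgain--Delbaen/mixed-Tsirelson construction and of how the coding forces the scalar-plus-compact property is a fair summary of~\cite{ah}, but for the purposes of this paper only the citation is needed.
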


Using this, we can easily prove Theorem~\ref{Thmnonfixedmaxleftideal}.

\begin{proof}[Proof of
  Theorem~{\normalfont{\ref{Thmnonfixedmaxleftideal}}}] 
  We begin by checking that $X_{\text{\normalfont{AH}}}$ satisfies the
  two assumptions made on p.~\pageref{assumption1}: indeed, 
  Theorem~\ref{thmAH}\romanref{thmAH3} ensures that
  $X_{\text{\normalfont{AH}}}^{**}$ is isomorphic to~$\ell_\infty$,
  while Theorem~\ref{thmAH}\romanref{thmAH2} (or~\romanref{thmAH3})
  implies that $X_{\text{\normalfont{AH}}}$ does not
  contain~$\ell_\infty$. Moreover, we  see that 
  $\mathscr{W}(X_{\text{\normalfont{AH}}}) =
  \mathscr{K}(X_{\text{\normalfont{AH}}})$ because
  Theorem~\ref{thmAH}\romanref{thmAH3} implies that
  $X_{\text{\normalfont{AH}}}$ is non-reflexive, so that
  $\mathscr{W}(X_{\text{\normalfont{AH}}})$ is a closed, non-zero,
  proper two-sided ideal of~$\mathscr{B}(X_{\text{\normalfont{AH}}})$,
  and $\mathscr{K}(X_{\text{\normalfont{AH}}})$ is the \textsl{only}
  such ideal by
  Theorem~\ref{thmAH}\romanref{thmAH1}--\romanref{thmAH2}. Hence the
  set $\mathscr{K}_1$ given by~\eqref{idealK1} is equal to the
  ideal~$\mathscr{W}_1$ defined in
  Proposition~{\normalfont{\ref{twosidedidealsinBE}}}%
  \romanref{defnidealI1}, and~$\mathscr{W}_1$ is singly generated as a
  left ideal by Theorem~\ref{leftidealgenbyL}.
  Theorem~\ref{thmAH}\romanref{thmAH1} implies that $\mathscr{K}_1$
  has codimension one in~$\mathscr{B}(E)$, so that it is trivially
  maximal as a left, right, and two-sided ideal. (The latter also
  follows from Proposition~{\normalfont{\ref{twosidedidealsinBE}}}%
  \romanref{defnidealI1}.) Being a non-zero, two-sided ideal,
  $\mathscr{K}_1$~contains $\mathscr{F}(E)$, and therefore
  $\mathscr{K}_1$ is not fixed.
\end{proof}

\begin{remark} \begin{romanenumerate}
  \item The Banach space $E =
    X_{\text{\normalfont{AH}}}\oplus\ell_\infty$ is clearly
    non-sep\-a\-ra\-ble, so the question naturally arises whether a
    separable Banach space~$E$ exists such that~$\mathscr{B}(E)$
    contains a non-fixed, finitely generated maximal left ideal. This
    has recently been answered affirmatively~\cite{kanialaustsen}.
  \item Proposition~\ref{c0plusHilbert} and the discussion preceding
    it raise the question whether the class of Banach spaces for which
    Question~\romanref{DalesQ1} has a positive answer is closed under
    finite direct sums.  Theorem~\ref{Thmnonfixedmaxleftideal} implies
    that this is not the case because~$X_{\text{\normalfont{AH}}}$
    and~$\ell_\infty$ both belong to this class by
    Theorem~\ref{fewopsimpliesfixed} and
    Example~\ref{exampleAutocomplemented}%
    \romanref{exampleAutocomplemented3}, respectively, whereas their
    direct sum does not.
  \end{romanenumerate}
\end{remark}

We shall next give a characterization of the ideal~$\mathscr{K}_1$
defined by~\eqref{idealK1}. Theorem~\ref{Thmnonfixedmaxleftideal2}
will be an easy consequence of this result.

\begin{theorem}\label{propCharleftidealcontainingFE}
  Let $E=X_{\text{\normalfont{AH}}}\oplus\ell_\infty$. Then the
  following three conditions are equivalent for each
  subset~$\mathscr{L}$ of~$\mathscr{B}(E)\colon$
  \begin{alphenumerate}
  \item\label{propCharleftidealcontainingFE1} $\mathscr{L} =
    \mathscr{K}_1;$
  \item\label{propCharleftidealcontainingFE3} $\mathscr{L}$ is a
    non-fixed, finitely-generated, maximal left ideal
    of~$\mathscr{B}(E);$
  \item\label{propCharleftidealcontainingFE2} $\mathscr{L}$ is a
    maximal left ideal of~$\mathscr{B}(E)$ and contains an operator
    which is bounded below.
  \end{alphenumerate}
\end{theorem}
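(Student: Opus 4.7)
The plan is: (a)$\Rightarrow$(b) is contained in Theorem~\ref{Thmnonfixedmaxleftideal}, and (a)$\Rightarrow$(c) holds because the generator $L$ of $\mathscr{K}_1=\mathscr{L}_{\{L\}}$ displayed in~\eqref{defnL} is bounded below---its two non-zero blocks $V\kappa_X$ and $C_\phi$ are each bounded below and their ranges lie in complementary subspaces of $\ell_\infty(\mathbb{I})$. For the converses, both~(b) and~(c) force $\mathscr{L}$ to be non-fixed: in~(c), a bounded-below operator cannot belong to any $\mathscr{M}\!\mathscr{L}_x$. By the strong dichotomy (Corollary~\ref{June12dich1}), $\mathscr{L}\supseteq\mathscr{E}(E)\supseteq\mathscr{F}(E)$. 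Since $\mathscr{K}_1$ is itself a maximal left ideal (Theorem~\ref{Thmnonfixedmaxleftideal}), it suffices to prove $\mathscr{L}\subseteq\mathscr{K}_1$; maximality of~$\mathscr{L}$ then forces equality.

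Suppose for a contradiction that $T\in\mathscr{L}\setminus\mathscr{K}_1$. By Theorem~\ref{thmAH}\romanref{thmAH1}, $T_{1,1}=\lambda I_{X_{\text{\normalfont{AH}}}}+K$ for some compact~$K$; since $T\notin\mathscr{K}_1$ we have $\lambda\neq0$, and after rescaling $\lambda=1$, so $T_{1,1}$ is Fredholm of index zero. Pick $R\in\mathscr{B}(X_{\text{\normalfont{AH}}})$ with $RT_{1,1}=I-P$ for a finite-rank projection~$P$; left-multiplying $T$ by the operator with $(1,1)$-block equal to~$R$ and other blocks zero produces in~$\mathscr{L}$ an element with matrix $\begin{pmatrix}I-P & RT_{1,2}\\ 0 & 0\end{pmatrix}$. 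By Lemma~\ref{operatorsXtolinftywc}\romanref{operatorsXtolinftywc2}, every operator $\ell_\infty\to X_{\text{\normalfont{AH}}}$ is weakly compact (as $X_{\text{\normalfont{AH}}}$ is separable and so contains no copy of~$\ell_\infty$), and hence strictly singular by Lemma~\ref{operatorsXtolinftywc}\romanref{operatorsXtolinftywc1}. Therefore $\begin{pmatrix}0 & RT_{1,2}\\ 0 & 0\end{pmatrix}\in\mathscr{S}(E)\subseteq\mathscr{E}(E)\subseteq\mathscr{L}$, and subtracting this and the finite-rank $P$-term yields $\begin{pmatrix}I & 0\\ 0 & 0\end{pmatrix}\in\mathscr{L}$.

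Next I produce $\begin{pmatrix}0 & 0\\ 0 & I\end{pmatrix}\in\mathscr{L}$. In~(c) use the given bounded-below $S\in\mathscr{L}$; in~(b) use $\Psi_\Gamma\colon E\to E^n$ for a finite generating set~$\Gamma$, which is bounded below by Corollary~\ref{Sept2011keylemma}. Restricting to the $\ell_\infty$-coordinate of~$E$, the $X_{\text{\normalfont{AH}}}$-valued component is strictly singular by the preceding paragraph, so the $\ell_\infty$-valued component---an operator from $\ell_\infty$ into $\ell_\infty$ in~(c) or into $\ell_\infty^n$ in~(b)---is a strictly singular perturbation of a bounded-below operator, and therefore upper semi-Fredholm by Kato's perturbation theorem. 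Its range is isomorphic to $\ell_\infty$ (being $\ell_\infty$ modulo a finite-dimensional subspace, and $\ell_\infty$ is isomorphic to each of its finite-codimensional subspaces), and so complemented in $\ell_\infty^m$ by the injectivity of~$\ell_\infty$. This yields a left inverse $\widetilde R$ modulo a finite-rank projection~$P'$. Left-multiplying $S$ by $\begin{pmatrix}0 & 0\\ 0 & \widetilde R\end{pmatrix}$ in~(c), or composing $\Psi_\Gamma$ with the analogous $S'\in\mathscr{B}(E^n,E)$ in~(b), produces in~$\mathscr{L}$ an element equal to $\begin{pmatrix}0 & 0\\ 0 & I\end{pmatrix}$ plus a finite-rank error plus an operator of the form $\begin{pmatrix}0 & 0\\ * & 0\end{pmatrix}$. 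The last summand factors as $\begin{pmatrix}0 & 0\\ * & 0\end{pmatrix}\begin{pmatrix}I & 0\\ 0 & 0\end{pmatrix}$, hence lies in~$\mathscr{L}$ by the previous step and the left-ideal property, while the finite-rank term is in $\mathscr{F}(E)\subseteq\mathscr{L}$. Thus $\begin{pmatrix}0 & 0\\ 0 & I\end{pmatrix}\in\mathscr{L}$.

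Adding the two diagonal projections gives $I_E\in\mathscr{L}$, contradicting the properness of~$\mathscr{L}$. Hence $\mathscr{L}\subseteq\mathscr{K}_1$, and $\mathscr{L}=\mathscr{K}_1$ by maximality. The main obstacle is the upper semi-Fredholm/complementation step in the third paragraph, where one must combine Lemma~\ref{operatorsXtolinftywc}, the fact that strictly singular perturbations preserve upper semi-Fredholmness, and the injectivity of~$\ell_\infty$ to conclude that the $\ell_\infty$-to-$\ell_\infty^m$ block of any bounded-below operator arising from $\mathscr{L}$ admits a left inverse modulo a finite-rank projection.
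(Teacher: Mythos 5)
Your proof is correct, and its analytic core coincides with the paper's: both arguments reduce to showing that the $(2,2)$-entry of a bounded-below operator in~$\mathscr{L}$ is upper semi-Fredholm (strip off the $X_{\text{\normalfont{AH}}}$-valued component, which is strictly singular by Lemma~\ref{operatorsXtolinftywc}, and apply \cite[Proposition~2.c.10]{lt1}), and then use the injectivity of~$\ell_\infty$ to invert it modulo a finite-rank projection, thereby placing $\bigl(\begin{smallmatrix}0&0\\0&I\end{smallmatrix}\bigr)$ in~$\mathscr{L}$. The organization is genuinely different, however. The paper proves the inclusion $\mathscr{K}_1\subseteq\mathscr{L}$ directly and concludes from the maximality of~$\mathscr{K}_1$; for this it must identify $\mathscr{E}(E)$ explicitly as the set of matrices with compact $(1,1)$-entry and weakly compact $(2,2)$-entry, which requires Gonz\'{a}lez's duality result \cite[Proposition~1]{gonzalez} to see that every operator from~$X_{\text{\normalfont{AH}}}$ to~$\ell_\infty$ is inessential. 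You instead prove $\mathscr{L}\subseteq\mathscr{K}_1$ by contradiction and conclude from the maximality of~$\mathscr{L}$: the contradiction hypothesis, combined with the scalar-plus-compact structure of $\mathscr{B}(X_{\text{\normalfont{AH}}})$, puts $\bigl(\begin{smallmatrix}I&0\\0&0\end{smallmatrix}\bigr)$ into~$\mathscr{L}$, and the troublesome lower-left corner is then absorbed by factoring through this projection, so Gonz\'{a}lez's result is never needed. You also handle~\alphref{propCharleftidealcontainingFE3} directly via~$\Psi_\Gamma$ rather than first reducing it to~\alphref{propCharleftidealcontainingFE2} as the paper does (which requires exhibiting a bounded-below operator from~$E^n$ into~$E$). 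Both routes are sound; yours trades one external citation for an argument by contradiction, while the paper's is constructive in that it exhibits all of~$\mathscr{K}_1$ inside~$\mathscr{L}$.
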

\begin{proof}
  \alphref{propCharleftidealcontainingFE1}$\Rightarrow$%
  \alphref{propCharleftidealcontainingFE3}.  This is immediate
  from Theorem~\ref{Thmnonfixedmaxleftideal}.

  \alphref{propCharleftidealcontainingFE3}$\Rightarrow$%
  \alphref{propCharleftidealcontainingFE2}. Suppose that $\mathscr{L}$
  is a non-fixed, finitely-generated, maximal left ideal
  of~$\mathscr{B}(E)$, so that $\mathscr{L} = \mathscr{L}_{\Gamma}$
  for some non-empty, finite subset~$\Gamma$ of~$\mathscr{B}(E)$. Set
  $n = |\Gamma|\in\N$.  By Corollary~\ref{Sept2011keylemma}, the
  operator~$\Psi_{\Gamma}$ is bounded below.  Moreover, there is an
  operator \mbox{$T\in\mathscr{B}(E^n,E)$} which is bounded below
  because $X_{\text{\normalfont{AH}}}$ embeds in~$\ell_\infty$, and
  $\ell_\infty$ is isomorphic to the direct sum of~$2n-1$ copies of
  itself. Hence the composite operator $T\Psi_{\Gamma}$ is bounded
  below, and it belongs to~$\mathscr{L}$
  by~\eqref{eqFGLeftideal04072012}.

  \alphref{propCharleftidealcontainingFE2}$\Rightarrow$%
  \alphref{propCharleftidealcontainingFE1}. Suppose that $\mathscr{L}$
  is a maximal left ideal of~$\mathscr{B}(E)$ and that $\mathscr{L}$
  contains an~opera\-tor $R= (R_{j,k})_{j,k=1}^2$ which is bounded
  below. Then $R$ does not belong to any fixed maximal left ideal, so
  that $\mathscr{E}(E)\subseteq\mathscr{L}$ by
  Corollary~\ref{June12dich1}.  Lemma~\ref{operatorsXtolinftywc} shows
  that each opera\-tor from~$\ell_\infty$
  to~$X_{\text{\normalfont{AH}}}$ is strictly singular, and thus
  inessential. Hence, by \cite[Proposition~1]{gonzalez}, each operator
  from~$X_{\text{\normalfont{AH}}}$ to~$\ell_\infty$ is also
  inessential, and so we conclude that
  \begin{align}\label{eqinessopinL}
    \biggl\{\begin{pmatrix} T_{1,1} & T_{1,2}\\ T_{2,1} &
      T_{2,2}\end{pmatrix}:\
    &T_{1,1}\in\mathscr{K}(X_{\text{\normalfont{AH}}}),\,
    T_{1,2}\in\mathscr{B}(\ell_\infty,X_{\text{\normalfont{AH}}}),\\
    &T_{2,1}\in\mathscr{B}(X_{\text{\normalfont{AH}}},\ell_\infty),\,
    T_{2,2}\in\mathscr{W}(\ell_\infty) \biggr\} =
    \mathscr{E}(E)\subseteq\mathscr{L}.\notag
  \end{align}
  Since the operator $R$ is bounded below, its restriction
  $R|_{\ell_\infty} = \bigl(\begin{smallmatrix}
    R_{1,2}\\ R_{2,2}\end{smallmatrix}\bigr)$ is also bounded below,
  and is thus an upper semi-Fred\-holm operator. Consequently
  $\bigl(\begin{smallmatrix} 0\\ R_{2,2}\end{smallmatrix}\bigr)$ is an
  upper semi-Fredholm operator by \cite[Proposition~2.c.10]{lt1}
  because~$R_{1,2}$ is strictly singular, and therefore $R_{2,2}$ is
  an upper semi-Fredholm operator. Let $Q\in\mathscr{F}(\ell_\infty)$
  be a projection onto~$\ker R_{2,2}$. Then the restriction of
  $R_{2,2}$ to $\ker Q$ is an isomorphism onto its range, which is a
  closed subspace of~$\ell_\infty$. Since~$\ell_\infty$ is injective,
  the inverse of this isomorphism extends to an operator $S\colon
  \ell_\infty\to \ker Q\subseteq\ell_\infty$, which then satisfies
  $SR_{2,2} = I_{\ell_\infty}-Q$. Hence 
  \[ \begin{pmatrix} 0 &   0\\
    SR_{2,1} & I_{\ell_\infty}-Q\end{pmatrix} = \begin{pmatrix} 0 &
    0\\ 0 & S\end{pmatrix}
  \begin{pmatrix} R_{1,1} & R_{1,2}\\ R_{2,1} &
    R_{2,2}\end{pmatrix}\in\mathscr{L},\] which
  by~\eqref{eqinessopinL} implies that \[
  \begin{pmatrix} 0 & 0\\ 0 &
    I_{\ell_\infty}\end{pmatrix}\in\mathscr{L}. \]
  Applying~\eqref{eqinessopinL} once more, we see that
  $\mathscr{K}_1\subseteq\mathscr{L}$, and so $\mathscr{K}_1 =
  \mathscr{L}$ by the maximality of~$\mathscr{K}_1$.
\end{proof}

\begin{proof}[Proof of Theorem~\ref{Thmnonfixedmaxleftideal2}.]
  The equivalence of
  conditions~\alphref{propCharleftidealcontainingFE1}
  and~\alphref{propCharleftidealcontainingFE3} in
  Theorem~\ref{propCharleftidealcontainingFE} shows
  that~$\mathscr{K}_1$ is the unique non-fixed, finitely-generated,
  maximal left ideal of~$\mathscr{B}(E)$.
  Proposition~{\normalfont{\ref{twosidedidealsinBE}}}%
  \romanref{defnidealI2} implies that $\mathscr{W}_2$ is a maximal
  two-sided ideal. Since
  $\mathscr{F}(E)\subseteq\mathscr{W}_2\nsubseteq\mathscr{K}_1$,
  $\mathscr{W}_2$~is not contained in any finitely-generated, maximal
  left ideal of~$\mathscr{B}(E)$.
\end{proof}

One may wonder whether the conclusion of
Theorem~\ref{Thmnonfixedmaxleftideal} that the ideal~$\mathscr{W}_1$
introduced in
Proposition~\ref{twosidedidealsinBE}\romanref{defnidealI1} is maximal
as a left ideal might be true more generally, that is, not only in the
case where $X$ is Argyros--Haydon's Banach space.  Our next result
implies that this is false for $X = c_0$. Note that all weakly compact
operators on~$c_0$ are compact, so that, in this case, $\mathscr{W}_1$
is equal to
\begin{equation}\label{idealK1c0}
  \mathscr{K}_1 = \biggl\{\begin{pmatrix} T_{1,1} & T_{1,2}\\
    T_{2,1} & 
    T_{2,2} \end{pmatrix}\in\mathscr{B}(c_0\oplus\ell_\infty) :
  T_{1,1}\in\mathscr{K}(c_0)\biggr\}.
\end{equation} 
\begin{proposition}\label{propK1notinfgmaxleftideal}
  The ideal $\mathscr{K}_1$ given by~\eqref{idealK1c0} is not
  contained in any finitely-generated, maximal left ideal
  of~$\mathscr{B}(c_0\oplus\ell_\infty)$.
\end{proposition}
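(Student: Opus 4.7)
The plan is to argue by contradiction: suppose that $\mathscr{L}$ is a finitely-generated, maximal left ideal of $\mathscr{B}(E)$ with $\mathscr{K}_1 \subseteq \mathscr{L}$. Since $E \oplus E \cong E$ (because $c_0 \cong c_0 \oplus c_0$ and $\ell_\infty \cong \ell_\infty \oplus \ell_\infty$), Proposition~\ref{cartesianSGnew} reduces this to $\mathscr{L} = \mathscr{L}_{\{T\}}$ for a single $T \in \mathscr{B}(E)$, which is bounded below by Corollary~\ref{Sept2011keylemma} (since $\mathscr{F}(E) \subseteq \mathscr{K}_1 \subseteq \mathscr{L}$). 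I would then observe that every operator $S_{1,2}\colon \ell_\infty \to c_0$ is weakly compact by Lemma~\ref{operatorsXtolinftywc}, hence strictly singular by Theorem~\ref{PelcWCK}, and in fact compact (because every infinite-dimensional closed subspace of $c_0$ contains a copy of $c_0$, so strictly singular coincides with compact on $c_0$). Consequently, the assignment $\phi(S) = \pi(S_{1,1})$ defines a surjective algebra homomorphism $\phi\colon \mathscr{B}(E) \to \mathscr{C} := \mathscr{B}(c_0)\big/\mathscr{K}(c_0)$ with kernel $\mathscr{K}_1$, and the image $\phi(\mathscr{L}) = \mathscr{C}\pi(T_{1,1})$ is a proper, singly-generated maximal left ideal of the Calkin algebra~$\mathscr{C}$.

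The next step is to adapt the proof of Theorem~\ref{Thmlp}, exploiting that $\mathscr{K}_1 \subseteq \mathscr{L}$ is substantially stronger than the hypothesis $\mathscr{F}(E) \subseteq \mathscr{L}$ used there. After replacing the norm of $c_0$ by an equivalent one so that its Schauder basis is monotone, let $P_n^{(1)}$ denote the basis projections and set $\widehat{P}_n = \bigl(\begin{smallmatrix} P_n^{(1)} & 0 \\ 0 & 0 \end{smallmatrix}\bigr) \in \mathscr{F}(E) \subseteq \mathscr{L}$; define $t_n = \inf\{\|S\|\colon S \in \mathscr{B}(E),\ ST = \widehat{P}_n\}$. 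The monotonicity argument in Lemma~\ref{TKTKNJLthm11June2012} shows that $(t_n)$ is increasing, and if it were unbounded, then with $\gamma = (t_n^{-1/2})$ the operator $\bigl(\begin{smallmatrix} \Delta_\gamma & 0 \\ 0 & 0 \end{smallmatrix}\bigr)$ would be a compact operator on~$E$ (hence belong to $\mathscr{K}(E) \subseteq \mathscr{K}_1 \subseteq \mathscr{L}$), yet the conclusion of that lemma adapted to~$\mathscr{B}(E)$ would force it outside~$\mathscr{L}$, a contradiction. Hence $(t_n)$ is bounded, and extracting a weak-$*$ accumulation point $S^\infty \in \mathscr{B}(E, E^{**})$ of $(\kappa_E S_n)$ where $S_n T = \widehat{P}_n$, precisely as in the proof of Theorem~\ref{Thmlp}, gives the identity $S^\infty T = \kappa_{c_0} \circ P_1^E$, in which $P_1^E \colon E \to c_0$ is the natural projection onto the first summand.

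The hard part will be to conclude a contradiction from this identity. In Theorem~\ref{Thmlp} one finishes by invoking a left inverse of~$\kappa_E$, but here $\kappa_{c_0}\colon c_0 \hookrightarrow \ell_\infty$ admits no left inverse by Phillips' theorem, so the last step of that argument breaks down — which is precisely why $E = c_0 \oplus \ell_\infty$ does not fall under the scope of Theorem~\ref{Thmlp}. To bridge the gap I would exploit the explicit form of $S^\infty T$ in tandem with the ample supply of operators in $\mathscr{K}_1 \subseteq \mathscr{L}$ — notably $P_2 = \bigl(\begin{smallmatrix} 0 & 0 \\ 0 & I_{\ell_\infty}\end{smallmatrix}\bigr) \in \mathscr{K}_1$ and every matrix with compact $(1,1)$-entry — by composing $S^\infty$ with the projection $\pi_1^{**}\colon E^{**} \to \ell_\infty$ and the inclusion $\iota_2\colon \ell_\infty \hookrightarrow E$ to produce auxiliary operators in~$\mathscr{L}$. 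The aim is to construct from these an operator in $\mathscr{L}$ whose $(1,1)$-entry equals $I_{c_0}$ modulo~$\mathscr{K}(c_0)$; this would make $\pi(T_{1,1})$ left-invertible in~$\mathscr{C}$, forcing $\phi(\mathscr{L}) = \mathscr{C}$ and hence $\mathscr{L} = \mathscr{B}(E)$, contradicting properness.
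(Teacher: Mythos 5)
Your proposal is incomplete at exactly the point where the real work lies, and the partial progress you make does not in fact bring you closer to a contradiction. The initial reductions are fine: $\mathscr{L}=\mathscr{L}_{\{T\}}$ by Proposition~\ref{cartesianSGnew}, $T$ bounded below by Corollary~\ref{Sept2011keylemma}, and the observation that every operator from $\ell_\infty$ to $c_0$ is compact, so that $\mathscr{K}_1$ is the kernel of a homomorphism $\phi$ onto the Calkin algebra of $c_0$ and $\phi(\mathscr{L})=\mathscr{C}\pi(T_{1,1})$ is a proper, singly-generated, maximal left ideal of that algebra. But the adaptation of Theorem~\ref{Thmlp} then yields only the identity $S^\infty T\iota_1=\kappa_E\iota_1$ on the first summand (not on all of $E$, since the weak$^*$-limit argument controls $S^\infty T$ only on $\operatorname{span}\{(e_j,0)\}$), and composing with $\pi_1^{**}$ and $\iota_2$ as you suggest produces an operator in $\mathscr{L}$ whose matrix has first column $\bigl(\begin{smallmatrix}0\\ \iota_2\kappa_{c_0}\end{smallmatrix}\bigr)$. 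That is no new information: by Theorem~\ref{leftidealgenbyL} an operator of precisely this shape already generates $\mathscr{K}_1$, which is contained in $\mathscr{L}$ by hypothesis. To upgrade an embedding $c_0\hookrightarrow\ell_\infty$ sitting in the $(2,1)$-slot to $I_{c_0}$ modulo compacts in the $(1,1)$-slot you would need to left-multiply by a projection of $\ell_\infty$ onto $c_0$, which Phillips' theorem forbids --- the very obstruction you flag. So the ``aim'' stated in your last paragraph is exactly equivalent to the desired contradiction ($1\in\phi(\mathscr{L})$), and no mechanism for achieving it is supplied; this is a genuine gap, not a routine completion.

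The paper closes this gap by a different route that makes essential use of the \emph{maximality} of $\mathscr{L}$ rather than trying to force $I_E$ into $\mathscr{L}$ directly. First one shows $T_{1,1}$ cannot be upper semi-Fredholm (else $\pi(T_{1,1})$ would be left-invertible in $\mathscr{C}$ and $\mathscr{L}=\mathscr{B}(E)$). Then Lemma~\ref{lemmaProj} produces a normalized sequence $(x_n)$ equivalent to the unit vector basis of $c_0$ with $\|T_{1,1}x_n\|\le 1/n$, together with a projection $Q_0$ fixing the odd-indexed $x_n$ and annihilating the even-indexed ones. Setting $Q=\bigl(\begin{smallmatrix}Q_0&0\\0&0\end{smallmatrix}\bigr)$, maximality forces either $Q=ST\in\mathscr{L}$ or $I_E=UT+VQ$; evaluating the first identity at $(x_{2n-1},0)$ and the second at $(x_{2n},0)$ yields in each case a unit vector equal to $U_{1,1}T_{1,1}x_m+U_{1,2}T_{2,1}x_m$, which is norm-null because $\|T_{1,1}x_m\|\to0$ and $U_{1,2}T_{2,1}\in\mathscr{W}(c_0)=\mathscr{K}(c_0)$ sends the weakly null sequence $(x_m)$ to a norm-null one. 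You would need to supply an argument of comparable substance to finish your proof.
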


For clarity, we present the main technical step in the proof of
Proposition~\ref{propK1notinfgmaxleftideal} as a separate lemma.

\begin{lemma}\label{lemmaProj}
  Suppose that $T\in\mathscr{B}(c_0)$ is not an upper semi-Fredholm
  operator. Then there exist a projection
  \mbox{$Q_0\in\mathscr{B}(c_0)$} and a normalized basic sequence
  $(x_n)_{n\in\N}$ in~$c_0$ such that $(x_n)_{n\in\N}$ is equivalent
  to the standard unit vector basis for~$c_0$ and
  \begin{equation}\label{lemmaProjEq1}
    Q_0x_{2n-1} = x_{2n-1},\quad Q_0x_{2n} = 0,\quad
    \text{and}\quad \| 
    Tx_n\|\le\frac{1}{n}\quad (n\in\N). \end{equation}
\end{lemma}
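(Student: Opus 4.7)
The plan is to exploit the standard characterization that an operator $T$ on a Banach space is upper semi-Fredholm if and only if there exists a closed, finite-codimensional subspace $Y$ such that $T|_Y$ is bounded below. Consequently, since our $T$ is \emph{not} upper semi-Fredholm, for every closed, finite-codimensional subspace $Y$ of $c_0$ and every $\epsilon>0$, there is a unit vector $y\in Y$ with $\|Ty\|<\epsilon$.

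Let $(e_n)_{n\in\N}$ denote the standard unit vector basis of~$c_0$ and $P_k\in\mathscr{B}(c_0)$ the $k^{\text{th}}$ basis projection.  The core of the proof is a recursive construction of integers $0 = k_0 < k_1 < k_2 < \cdots$ and unit vectors $x_n\in c_0$ such that $x_n$ is supported in $(k_{n-1},k_n]$ (that is, $x_n = (P_{k_n}-P_{k_{n-1}})x_n$) and $\|Tx_n\|\le 1/n$.  At stage~$n$, the subspace $\ker P_{k_{n-1}}$ has codimension~$k_{n-1}$, so by the observation above it contains a unit vector $y$ for which $\|Ty\|$ is as small as we like; since $y\in c_0$, we can then choose $M>k_{n-1}$ with $\|y-P_M y\|_\infty$ arbitrarily small, put $k_n = M$, and take $x_n = P_M y/\|P_M y\|$.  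A routine quantitative choice (e.g.\ demanding $\|Ty\|$ and $\|T\|\cdot\|y-P_M y\|$ each below $\tfrac{1}{4n}$) yields $\|Tx_n\|\le 1/n$.

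Once the $(x_n)$ are produced, the remaining conclusions are essentially automatic.  The vectors $x_n$ have pairwise disjoint supports with respect to $(e_n)$ and satisfy $\|x_n\|_\infty=1$, so for any finitely supported scalar sequence $(\alpha_n)$ one has
\[ \Bigl\|\sum_n \alpha_n x_n\Bigr\|_\infty = \max_n |\alpha_n|, \]
showing that $(x_n)_{n\in\N}$ is isometrically equivalent to the standard unit vector basis of~$c_0$, and in particular is basic.  Setting $A = \bigcup_{n\text{ odd}}(k_{n-1},k_n]\subseteq\N$, the coordinate projection
\[ Q_0\colon\ (\alpha_i)_{i\in\N}\mapsto (\chi_A(i)\alpha_i)_{i\in\N} \]
is a norm-one projection on~$c_0$, and by construction of the intervals, $Q_0 x_{2n-1}=x_{2n-1}$ while $Q_0 x_{2n}=0$ for every $n\in\N$, as required.

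The only substantive step is the recursion, and even there the crucial input --- the existence of almost-null unit vectors inside $\ker P_{k_{n-1}}$ --- is delivered immediately by the hypothesis that $T$ is not upper semi-Fredholm, so no deeper obstacle is anticipated.
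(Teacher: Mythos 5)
Your argument is correct, and it takes a genuinely different route from the paper. The paper splits into two cases according to whether $\ker T$ is infinite-dimensional: in the first case it invokes the fact that every infinite-dimensional closed subspace of $c_0$ contains a copy of $c_0$ that is complemented in $c_0$ (\cite[Proposition~2.a.2]{lt1}) and takes the $x_n$ inside $\ker T$; in the second case (finite-dimensional kernel, non-closed range) it runs a block-basis recursion very much like yours, justifying the existence of almost-null unit vectors in $\ker P_m$ by the observation that otherwise $T(c_0) = T(\ker P_m) + \operatorname{span}\{Te_1,\ldots,Te_m\}$ would be closed. Your single observation --- that $T$ is upper semi-Fredholm if and only if its restriction to some closed, finite-codimensional subspace is bounded below --- collapses this dichotomy, so one recursion covers both cases; and because your $x_n$ are finitely and disjointly supported, the equivalence to the unit vector basis and the construction of $Q_0$ reduce to elementary coordinate-projection facts rather than appeals to \cite[Propositions~2.a.1 and~2.a.2]{lt1}. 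The small price is that you only get $\|Tx_n\|\le 1/n$ where the paper's Case~1 gives $Tx_n=0$ exactly, but the lemma does not need more, so your version is shorter and more self-contained. Do make sure to record the easy proof of the semi-Fredholm characterization (take $Y$ a closed complement of $\ker T$ for one direction; for the other, note that $\ker T$ injects into a finite-dimensional complement of $Y$ and that $T(E)=T(Y)+T(F)$ is closed), and to handle the normalization estimate for $x_n = P_My/\|P_My\|$ carefully, e.g.\ by also insisting $\|y-P_My\|\le\tfrac12$; both are routine.
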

\begin{proof} Let $(e_n)_{n\in\N}$ denote the standard unit vector
  basis for~$c_0$.  Since $T$ is not an upper semi-Fredholm operator,
  there are two cases to consider.

  \emph{Case 1:} $\dim\ker T = \infty$. Then $\ker T$ contains a
  closed subspace~$Y$ which is isomorphic to~$c_0$ and complemented
  in~$c_0$ (\emph{e.g.}, see \cite[Proposition~2.a.2]{lt1}).  Let
  $(x_n)_{n\in\N}$ be a normalized Schauder basis for~$Y$ such that
  $(x_n)_{n\in\N}$ is equivalent to~$(e_n)_{n\in\N}$.  Since $Y$ is
  complemented in~$c_0$ and the basis~$(x_n)_{n\in\N}$ is
  unconditional, there is a projection $Q_0\in\mathscr{B}(c_0)$ which
  satisfies the first two identities in~\eqref{lemmaProjEq1}, while
  the third one is trivial because $x_n\in Y\subseteq\ker T$ for each
  $n\in\N$.

  \emph{Case 2:} $\dim\ker T < \infty$ and $T(c_0)$ is not closed.
  For each $n\in\N$, choose $\epsilon_n\in(0,1)$ such that
  $(1+\|T\|)\epsilon_n(1-\epsilon_n)^{-1}\le n^{-1}$.  By induction,
  we shall construct a normalized block basic
  sequence~$(x_n)_{n\in\N}$ of $(e_n)_{n\in\N}$ such that $\|
  Tx_n\|\le n^{-1}$ for each $n\in\N$.

  To start the induction, we observe that $T$ cannot be bounded below
  because its range is not closed, so that we can find a unit vector
  $y_1\in c_0$ such that $\|Ty_1\|\le\epsilon_1$. Approximating~$y_1$
  within~$\epsilon_1$ by a finitely-supported vector and normalizing
  it, we obtain a finitely-supported unit vector~$x_1\in c_0$ such
  that $\| Tx_1\|\le (1+\|T\|)\epsilon_1(1-\epsilon_1)^{-1}\le 1$ by
  the choice of~$\epsilon_1$.

  Now assume inductively that unit vectors $x_1,\ldots,x_n\in c_0$
  with consecutive supports have been chosen for some $n\in\N$ such
  that $\|Tx_j\|\le 1/j$ for each $j\in\{1,\ldots,n\}$. Let $m\in\N$
  be the maximum of the support of~$x_n$, so that
  $x_1,\ldots,x_n\in\operatorname{span}\{e_1,\ldots,e_m\}$, and let
  $P_m$ be the $m^{\text{th}}$ basis projection associated with
  $(e_j)_{j\in\N}$.  If $T|_{\ker P_m}$ were bounded below, then it
  would have closed range, so that \[ T(c_0) = T(\ker P_m) +
  \operatorname{span}\{Te_1,\ldots,Te_m\} \] would also be closed,
  being the sum of a closed subspace and a finite-dimensional
  one. This is false, and hence $T|_{\ker P_m}$ is not bounded
  below. We can therefore choose a unit vector~$y_{n+1}\in \ker P_m$
  such that $\| Ty_{n+1}\|\le\epsilon_{n+1}$. Now, as in the first
  step of the induction, we approximate $y_{n+1}$
  within~$\epsilon_{n+1}$ by a finitely-supported vector in~$\ker P_m$
  and normalize it to obtain a finitely-supported unit vector
  $x_{n+1}\in\ker P_m$ such that \[ \| Tx_{n+1}\|\le
  \frac{(1+\|T\|)\epsilon_{n+1}}{1-\epsilon_{n+1}}\le \frac{1}{n+1} \]
  by the choice of~$\epsilon_{n+1}$.  Therefore the induction
  continues.

  By~\cite[Proposition~2.a.1]{lt1}, the sequence $(x_n)_{n\in\N}$ is
  equivalent to~$(e_n)_{n\in\N}$, and its closed linear span is
  complemented in~$c_0$. Hence, as in Case~1, we obtain a projec\-tion
  \mbox{$Q_0\in\mathscr{B}(c_0)$} such that the first two identities
  in~\eqref{lemmaProjEq1} are satisfied, while the third one holds by
  the con\-struc\-tion of~$(x_n)_{n\in\N}$.
\end{proof}

\begin{proof}[Proof of
  Proposition~{\normalfont{\ref{propK1notinfgmaxleftideal}}}.]
  Assume towards a contradiction that $\mathscr{L}$ is a
  finite\-ly-generated, maximal left ideal of
  \mbox{$\mathscr{B}(c_0\oplus\ell_\infty)$} such that
  $\mathscr{K}_1\subseteq\mathscr{L}$.
  Proposition~\ref{cartesianSGnew} implies that $\mathscr{L}$ is
  generated by a single operator, say
  \[ T = \begin{pmatrix} T_{1,1} & T_{1,2}\\ T_{2,1} &
    T_{2,2} \end{pmatrix}\in\mathscr{B}(c_0\oplus\ell_\infty).  \] 

  We \emph{claim} that~$T_{1,1}$ is not an upper semi-Fredholm
  operator. Assume the contrary; that is, $\ker T_{1,1}$ is
  finite-dimensional, so that we can take a
  projection~$P\in\mathscr{F}(c_0)$ onto $\ker T_{1,1}$, and
  $T_{1,1}(c_0)$ is closed. Then the restriction
  $\widetilde{T}_{1,1}\colon x\mapsto T_{1,1}x,\, \ker P\to
  T_{1,1}(c_0)$, is an isomorphism. Its range is complemented in~$c_0$
  by Sobczyk's theorem~\cite{sob} because it is isomorphic to~$\ker
  P$, which is a closed subspace of finite codimension in~$c_0$, and
  hence isomorphic to~$c_0$.  We can therefore extend the inverse
  of~$\widetilde{T}_{1,1}$ to an operator $S\in\mathscr{B}(c_0)$,
  which then satisfies $ST_{1,1} = I_{c_0} - P$. Since $P$ has finite
  rank, we have
  \[ \begin{pmatrix} P & 0 \\ 0 &
    I_{\ell_\infty} \end{pmatrix}\in\mathscr{K}_1\subseteq
  \mathscr{L}\] and \[
  \begin{pmatrix} T_{1,1} & 0 \\ 0 & 0 \end{pmatrix} = T
  - \begin{pmatrix} 0 & T_{1,2}\\ T_{2,1} &
    T_{2,2} \end{pmatrix}\in\mathscr{L} -
  \mathscr{K}_1\subseteq\mathscr{L}, \] which implies that
  \[ \begin{pmatrix} I_{c_0} & 0 \\ 0 & I_{\ell_\infty} \end{pmatrix}
  = \begin{pmatrix} P & 0 \\ 0 & I_{\ell_\infty} \end{pmatrix}
  + \begin{pmatrix} S & 0 \\ 0 & 0 \end{pmatrix}
  \begin{pmatrix} T_{1,1} & 0 \\ 0 & 0 \end{pmatrix}\in\mathscr{L}. \]
  This, however, contradicts the assumption that the left ideal
  $\mathscr{L}$ is proper, and thus completes the proof of our claim.

  Hence, by Lemma~\ref{lemmaProj}, we obtain a projection
  \mbox{$Q_0\in\mathscr{B}(c_0)$} and a normalized basic sequence
  $(x_n)_{n\in\N}$ in~$c_0$ such that $(x_n)_{n\in\N}$ is equivalent
  to the standard unit vector basis $(e_n)_{n\in\N}$ for~$c_0$ and
  \begin{equation}\label{lemmaProjEq1'}
    Q_0x_{2n-1} = x_{2n-1},\quad Q_0x_{2n} = 0,\quad
    \text{and}\quad \| 
    T_{1,1}x_n\|\leq\frac{1}{n}\quad (n\in\N). \end{equation}
  The sequence  $(x_n)_{n\in\N}$ is weakly null 
  because it is equivalent to the weakly null
  se\-quence $(e_n)_{n\in\N}$, and so the sequence $(Rx_n)_{n\in\N}$ is
  norm-null for each $R\in\mathscr{K}(c_0)$. Now let
  \[ Q =
  \begin{pmatrix} Q_0 & 0\\ 0 &
    0\end{pmatrix}\in\mathscr{B}(c_0\oplus\ell_\infty). \] The
  maximality of the left ideal~$\mathscr{L}$ implies that either
  \[ \text{(i)}\quad Q\in\mathscr{L}\quad\quad
  \text{or}\quad\quad \text{(ii)}\quad \mathscr{L} +
  \mathscr{L}_{\{Q\}} =
  \mathscr{B}(c_0\oplus\ell_\infty). \] We shall complete the proof by
  showing that both of these alternatives are impossible.

  In case (i), there is $S =
  (S_{j,k})_{j,k=1}^2\in\mathscr{B}(c_0\oplus\ell_\infty)$ with $Q =
  ST$.  Defining $P_0\in\mathscr{B}(c_0\oplus\ell_\infty,c_0)$ by
  $P_0(x,f) = x$ for each $x\in c_0$ and $f\in\ell_\infty$, we have
  \[ x_{2n-1} = P_0Q(x_{2n-1},0) = P_0ST(x_{2n-1},0) =
  S_{1,1}T_{1,1}x_{2n-1} + S_{1,2}T_{2,1}x_{2n-1} \] for each
  $n\in\N$.  This, however, is absurd since the left-hand side is a
  unit vector, whereas the right-hand side norm-converges to~$0$ as
  $n\to\infty$: this holds because $\|T_{1,1}x_{2n-1}\|\to 0$
  by~\eqref{lemmaProjEq1'} and $S_{1,2}T_{2,1}\in\mathscr{W}(c_0) =
  \mathscr{K}(c_0)$.

  In case (ii), there are operators
  $U,V\in\mathscr{B}(c_0\oplus\ell_\infty)$  such that 
  $$I_{c_0\oplus\ell_\infty} = UT + VQ.$$ Define~$P_0$ as
  above, and write $U = (U_{j,k})_{j,k=1}^2$. Then, since
  $Q_0x_{2n}=0$, we have
  \[ x_{2n} = P_0(UT +VQ)(x_{2n},0) = 
  U_{1,1}T_{1,1}x_{2n} + U_{1,2}T_{2,1}x_{2n}\quad (n\in\N), \] which
  leads to a contradiction as in case~(i) because the left-hand side
  is a unit vector, whereas the right-hand side norm-converges to~$0$
  as $n\to\infty$.
\end{proof}

\section*{Acknowledgements} 
\noindent
We are grateful to David Blecher (Houston) and Mikael R\o{}rdam
(Copenhagen) for having informed us that the conjecture of the first
author and \.{Z}elazko stated on p.~\pageref{DZconj} has a positive
answer for $C^*$-algebras, and for having shown us how to deduce it
from standard facts. We are also grateful to the referee for
his/her careful reading of our paper which has led to simplified
proofs of several results in Section~\ref{sectionLcontainsFE}.

Key parts of this work were carried out during a visit of Piotr
Koszmider to Lancaster in February 2012. The visit was supported by a
London Mathematical Society Scheme 2 grant (ref.~21101). The authors
gratefully acknowledge this support.


\begin{thebibliography}{99}
\bibitem{ak}  F.~Albiac and N.~J.~Kalton, \emph{Topics in Banach space
  theory},  Grad.\ Texts in Math.~233, Springer-Verlag, New York, 2006.
\bibitem{amir} D.~Amir, \emph{Continuous function spaces with the bounded
  extension property}, {Bull.\ Res.\ Concil Israel
    Sect.~F}~{101} (1962), 133--138.
\bibitem{argyrosetal} S.~A.~Argyros, J.~F.~Castillo, A.~S.~Granero,
  M.~Jim\'{e}nez, and J.~P.~Moreno, \emph{Complementation and embeddings of
  $c_0(I)$ in Banach spaces,} {Proc.~London Math.\
    Soc.}~{85} (2002), 742--768.
\bibitem{ah} S.~A.~Argyros and R.~G.~Haydon, \emph{A hereditarily
    indecomposable $\mathscr{L}_\infty$-space that solves the
    scalar-plus-compact problem}, {Acta Math.}~{206} (2011), 1--54.
\bibitem{baker} J.~W.~Baker, \emph{Some uncomplemented subspaces of $C(X)$
  of the type~$C(Y)$}, {Studia Math.}~{36} (1970), 85--103.
\bibitem{BlecherKania} D.~Blecher and T.~Kania, \emph{Finite generation in
  $C^*$-algebras and Hilbert $C^*$-modules}, in preparation.
\bibitem{boudi} N.~Boudi, \emph{Banach algebras in which every left ideal is
  countably generated}, {Irish Math.\ Soc.\ Bull.}~{48}
  (2002), 17--24.
\bibitem{cpy} S.~R.~Caradus, W.~E.~Pfaffenberger and B.~Yood,
  \emph{Calkin algebras and algebras of operators on Banach spaces},
  Lect.\ Notes Pure Appl.\ Math.~9, Marcel Dekker, New York,
  1974.
\bibitem {cn} W.~W.~Comfort and S.~Negrepontis, \emph{The theory of
    ultrafilters}, Grundlehren Math.\ Wiss.~211, Springer-Verlag, New
  York--Heidelberg, 1974.
\bibitem{cohen} H.~B.~Cohen, \emph{Injective envelopes of Banach spaces},
  {Bull.\ Amer.\ Math.\ Soc.}~{70} (1964), 723--726.
\bibitem{dales} H.~G.~Dales, \emph{Banach algebras and automatic
    continuity}, London Math.\ Soc.\ Monogr.\ Ser.~24, Clarendon Press,
  Oxford, 2000.   
\bibitem{daleszelazko} H.~G.~Dales and W.~\.{Z}elazko,
  \emph{Generators of maximal left ideals in Banach algebras}, {Studia
    Math.}~{212} (2012), 173--193.
\bibitem{dfjp} W.~J.~Davis, T.~Figiel, W.~B.~Johnson, and
  A.~Pe{\l}czy{\a'n}ski, \emph{Factoring weakly compact operators},
  {J.~Funct.\ Anal.}~{17} (1974), 311--327.
\bibitem{em} I.~S.~Edelstein and B.~S.~Mityagin, \emph{Homotopy type of
  linear groups of two classes of Banach spaces}, {Funct.\
    Anal.~Appl.}~{4} (1970), 221--231.
\bibitem{ft} V.~Ferreira and G.~Tomassini, \emph{Finiteness properties of
  topological algebras}, {Ann.\ Sc.\ Norm.\ Sup.\
    Pisa Cl.\ Sci.}~{5} (1978), 471--488.
\bibitem{gillman} L.~Gillman, \emph{Countably generated ideals in rings of
  continuous functions}, {Proc.\ Amer.\ Math.\ Soc.}~{11}
  (1960), 660--666.
\bibitem{gonzalez} M.~Gonz\'{a}lez, \emph{On essentially incomparable
    Banach spaces}, {Math.~Z.}~{215} (1994), 621--629.
\bibitem{goodner} D.~B.~Goodner, \emph{Projections in normed linear spaces},
  {Trans.\ Amer.\ Math.\ Soc.}~{69} (1950), 89--108.
\bibitem{gm} W.~T.~Gowers and B.~Maurey, \emph{The unconditional basic
    sequence problem}, {J.~Amer.\ Math.\ Soc.}~{6} (1993), 851--874.
\bibitem{gr} B.~Gramsch, \emph{Eine Idealstruktur Banachscher
    Operatoralgebren}, {J.~Reine Angew.\ Math.}~{225} (1967), 97--115.
\bibitem{granero} A.~S.~Granero, \emph{On complemented subspaces
  of~$c_0(I)$}, {Atti Semin.\ Mat.\ Fis.\ Univ.\ M\'{o}dena Reggio Emilia}
  {XLVI} (1998), 35--36.
\bibitem{groth} A.~Grothendieck, \emph{Une caract\'{e}risation
    vectorielle m\'{e}trique des espaces~$L_1$}, {Canad.\
    J.~Math.}~{7} (1955), 552--561.
\bibitem{GronbaekMorita} N.~Gr\o{}nb\ae{}k, \emph{Morita equivalence
    for Banach algebras}, {J.~Pure Appl.\ Algebra}~{99} (1995),
  183--219.
\bibitem{jacobson} N.~Jacobson, \emph{Basic Algebra II},
  W.\ H.\ Freeman, San Francisco, California, 1980.
\bibitem {jech} T.~Jech, \emph{Set Theory}, Third Millennium Ed.,
  revised and expanded, Springer-Verlag, Berlin, 2003.
\bibitem{kr1} R.~V.~Kadison and J.~R.~Ringrose, \emph{Fundamentals of
    the theory of operator alge\-bras~I: Elementary theory},  Pure 
  Appl.\ Math.~100, Academic Press, New York, 1983.
\bibitem{kr2} R.~V.~Kadison and J.~R.~Ringrose, \emph{Fundamentals of
    the theory of operator alge\-bras~II: Advanced theory}, Pure Appl.\
  Math.~100, Academic Press, Orlando, Florida, 1986.
\bibitem{kaltonpeck} N.~J.~Kalton and N.~T.~Peck, \emph{Twisted sums
    of sequence spaces and the three space problem}, {Trans.\ Amer.\
    Math.\ Soc.}~{255} (1979), 1--30.
\bibitem{kanialaustsen} T.~Kania and N.~J.~Laustsen, \emph{Ideal
    structure of the algebra of bounded operators acting on a Banach
    space}, in preparation.
\bibitem{kl} D.~Kleinecke, \emph{Almost-finite, compact, and
    inessential operators}, {Proc.\ Amer.\ Math.\ Soc.}~{14} (1963),
  863--868.
\bibitem{koszmider} P.~Koszmider, \emph{Banach spaces of continuous
    functions with few operators}, {Math.\ Ann.}~{330} (2004),
  151--183.
\bibitem{lau1} N.~J.~Laustsen, \emph{Maximal ideals in the algebra of
    operators on certain Banach spaces}, {Proc.\ Edinb.\ Math.\
    Soc.}~{45} (2002), 523--546.
\bibitem{lau2} N.~J.~Laustsen, \emph{Commutators of operators on Banach
  spaces}, {J.~Operator Theory}~{48} (2002), 503--514.
\bibitem{linden} J.~Lindenstrauss, \emph{On the extension property for
    compact operators}, {Bull.\ Amer.\ Math.\ Soc.}~{68} (1962),
  484--487.
\bibitem{lt1} J.~Lindenstrauss and L.~Tzafriri, \emph{Classical Banach
  spaces~I}, Ergeb.\ Math.\ Grenz\-geb.~{92}, Springer-Verlag,
  Berlin--New York, 1977.
\bibitem{lt2} J.~Lindenstrauss and L.~Tzafriri, \emph{Classical Banach
  spaces~II}, Ergeb.\ Math.\ Grenz\-geb.~{97}, Springer-Verlag,
  Berlin--New York, 1979.
\bibitem{luft} E.~Luft, \emph{The two-sided closed ideals of the algebra of
  bounded linear operators of a Hilbert space}, {Czechoslovak 
    Math.~J.}~{18} (1968), 595--605. 
\bibitem{meg} R.~E.~Megginson, \emph{An introduction to Banach
    space theory}, Grad.\ Texts in Math.~{183}, Springer-Verlag,
  New York, 1998.
\bibitem{murphy} G.~J.~Murphy, \emph{$C^*$-algebras and operator
    theory}, Academic Press, Boston, MA, 1990. 
\bibitem{nachbin} L.~Nachbin, \emph{On the Han--Banach theorem},
  {An.\ Acad.\ Brasil.\ Ci\^{e}nc.}~{21} (1949), 151--154.
\bibitem{pel} A.~Pe{\l}czy{\a'n}ski, \emph{On strictly singular and strictly
  cosingular operators. I. Strictly singular and strictly cosingular
  operators in $C(S)$-spaces}, {Bull.\ Acad.\ Polon.\ Sci.\
    S\'{e}r.\ Sci.\ Math.\ Astronom.\ Phys.}~{13} (1965),
  31--36.
\bibitem{pi} A.~Pietsch, \emph{Operator ideals}, North Holland,
  Amsterdam--New York--Oxford, 1980.
\bibitem{plebanek} G.~Plebanek, \emph{A construction of a Banach space
    $C(K)$ with few operators}, {Topology Appl.}~{143} (2004),
  217--239.
\bibitem{Pospisil} B.~Posp\'{\i}\v{s}il, \emph{On bicompact spaces},
  {Publ.\ Fac.\ Sci.\ Univ.\ Masaryk} {270} (1939), 3--16.
\bibitem{prosser} R.~T.~Prosser, \emph{On the ideal structure of
    operator algebras}, {Mem.\ Amer.\ Math.\ Soc.}~{45} (1963).
\bibitem{Rosenberg} A.~Rosenberg, \emph{The number of irreducible
  representations of simple rings with no minimal ideals},
  {Amer.\ J.~Math.}~{75} (1953), 523--530.
\bibitem{Sch} Th.\ Schlumprecht, \emph{An arbitrarily distortable
    Banach space}, {Israel J.~Math.}~{76} (1991), 81--95.
\bibitem{st} A.~M.~Sinclair and A.~W.~Tullo, \emph{Noetherian Banach
    algebras are finite dimensional}, {Math.\ Ann.}~{211} (1974),
  151--153.
\bibitem{sob} A.~Sobczyk, \emph{Projection of the space $(m)$ on its
    subspace~$(c_0)$}, {Bull.\ Amer.\ Math.\ Soc.}~{47} (1941),
  938--947.
\bibitem{stout} E.~L.~Stout, \emph{The theory of uniform algebras},
  Bogden and Quigley, Tarrytown-on-Hudson, New York, 1971.
\bibitem{Sz} A.~Szankowski, \emph{$\mathscr{B}(H)$ does not have the
  approximation property}, {Acta Math.}~{147} (1981),
  89--108.
\bibitem{yo} B.~Yood, \emph{Difference algebras of linear
transformations on a Banach space}, {Pacific J.~Math.}~{4}
(1954), 615--636.
\end{thebibliography}
\end{document}